\documentclass[a4paper,oneside,10pt,draft,reqno]{amsart}


\usepackage{amssymb}
\usepackage[mathcal]{euscript}
\usepackage[all]{xy}

\usepackage[l2tabu,orthodox]{nag}
\usepackage[all, warning]{onlyamsmath}


\setlength{\topmargin}{-1.4cm}
\setlength{\footskip}{0.5cm}
\setlength{\oddsidemargin}{-0.75cm}
\setlength{\evensidemargin}{-0.75cm}
\setlength{\textheight}{25.5cm}
\setlength{\textwidth}{18.0cm}

\numberwithin{equation}{section}
\allowdisplaybreaks



\theoremstyle{definition}
\newtheorem{thm}{Theorem}[section]
\newtheorem{prop}[thm]{Proposition}
\newtheorem{lem}[thm]{Lemma}
\newtheorem{cor}[thm]{Corollary}
\newtheorem{dfn}[thm]{Definition}
\newtheorem{rmk}[thm]{Remark}
\newtheorem{fct}[thm]{Fact}
\newtheorem*{dfn*}{Definition}
\newtheorem*{fct*}{Fact}
\newtheorem*{rmk*}{Remark}
\newtheorem*{lem*}{Lemma}
\newtheorem*{prop*}{Proposition}
\newtheorem*{thm*}{Theorem}
\newtheorem*{cor*}{Corollary}


\newcommand{\ve}{\varepsilon}

\newcommand{\bbC}{\mathord{\mathbb{C}}}
\newcommand{\bbK}{\mathord{\mathbb{K}}}
\newcommand{\bbZ}{\mathord{\mathbb{Z}}}

\newcommand{\bfL}{\mathord{\mathbf{L}}}
\newcommand{\bfR}{\mathord{\mathbf{R}}}

\newcommand{\g}{\mathord{\mathfrak{g}}}

\newcommand{\frkm}{\mathord{\mathfrak{m}}}
\newcommand{\frkS}{\mathord{\mathfrak{S}}}

\newcommand{\catA}{\mathord{\mathcal{A}}}
\newcommand{\catDM}{\mathord{D\mathcal{M}}}
\newcommand{\catL}{\mathord{\mathcal{L}}}
\newcommand{\catM}{\mathord{\mathcal{M}}}
\newcommand{\catN}{\mathord{\mathcal{N}}}
\newcommand{\catS}{\mathord{\mathcal{S}}}
\newcommand{\catMC}{\mathord{\mathcal{MC}}}
\newcommand{\catSet}{\mathord{\mathcal{S}ets}}
\newcommand{\catDef}{\mathord{\mathcal{D}ef}}

\newcommand{\CA}{\mathord{\mathcal{CA}}}

\newcommand{\opB}{\mathord{\mathcal{B}}}
\newcommand{\opC}{\mathord{\mathcal{C}}}
\newcommand{\opF}{\mathord{\mathcal{F}}}
\newcommand{\opP}{\mathord{\mathcal{P}}}
\newcommand{\opT}{\mathord{\mathcal{T}}}
\newcommand{\opAss}{\mathord{\mathcal{A}ssoc}}
\newcommand{\opCom}{\mathord{\mathcal{C}om}}
\newcommand{\opComu}{\mathord{\mathcal{C}omu}}
\newcommand{\opEnd}{\mathop{\mathcal{E}nd}\nolimits}
\newcommand{\opcoEnd}{\mathop{co\mathcal{E}nd}\nolimits}
\newcommand{\opHom}{\mathord{\mathcal{H}om}}
\newcommand{\opLie}{\mathord{\mathcal{L}ie}}

\newcommand{\shD}{\mathord{\mathcal{D}}}
\newcommand{\shO}{\mathord{\mathcal{O}}}
\newcommand{\shV}{\mathord{\mathcal{V}}}
\newcommand{\shY}{\mathord{\mathcal{Y}}}

\newcommand{\wh}{\widehat}

\newcommand{\longhookrightarrow}{\lhook\joinrel\longrightarrow}
\newcommand{\longtwoheadrightarrow}{\relbar\joinrel\twoheadrightarrow}

\newcommand{\ccone}{\mathbin{\circ_{(1)}}}

\newcommand{\ad}{\mathop{\mathrm{ad}}\nolimits}
\newcommand{\gr}{\mathop{\mathrm{gr}}\nolimits}
\newcommand{\id}{\mathop{\mathrm{id}}\nolimits}

\newcommand{\sgn}{\mathop{\mathrm{sgn}}\nolimits}

\newcommand{\colim}{\mathop{\mathrm{colim}}\nolimits}
\newcommand{\Wedge}{\mathop{\bigwedge}\nolimits}

\newcommand{\ch}{\mathord{ch}}
\newcommand{\cl}{\mathord{cl}}
\newcommand{\reg}{\mathord{\mathrm{reg}}}

\newcommand{\Tw}{\mathop{\mathrm{Tw}}}
\newcommand{\MC}{\mathop{\mathrm{MC}}\nolimits}
\newcommand{\Aut}{\mathop{\mathrm{Aut}}\nolimits}
\newcommand{\Cok}{\mathop{\mathrm{Coker}}\nolimits}
\newcommand{\Def}{\mathop{\mathrm{Def}}\nolimits}
\newcommand{\Der}{\mathop{\mathrm{Der}}\nolimits}
\newcommand{\End}{\mathop{\mathrm{End}}\nolimits}
\newcommand{\Hom}{\mathop{\mathrm{Hom}}\nolimits}

\newcommand{\Lie}{\mathop{\mathrm{Lie}}\nolimits}
\newcommand{\Res}{\mathop{\mathrm{Res}}\nolimits}

\newcommand{\simto}{\xrightarrow{\sim}}
\newcommand{\longto}{\longrightarrow}
\newcommand{\longsimto}{\xrightarrow{\ \sim \ }}

\newcommand{\ket}[1]{\left| #1 \right>}


\begin{document}


\title{Deformation quantization of vertex Poisson algebras}

\author{Shintarou Yanagida}
\address{Graduate School of Mathematics, Nagoya University 
Furocho, Chikusaku, Nagoya, Japan, 464-8602.}
\email{yanagida@math.nagoya-u.ac.jp}

\date{July 8, 2016}


\begin{abstract}
We introduce dg Lie algebras controlling the deformations of 
vertex algebras and vertex Poisson algebras,
utilizing the notion of operadic dg Lie algebra and 
the theory of  chiral algebra.
In terms of those dg Lie algebras, 
we formulate the deformation quantization problem of 
vertex Poisson algebras to vertex algebras.
\end{abstract}

\maketitle
\tableofcontents

\setcounter{section}{-1}
\section{Introduction}

\subsection{Vertex Poisson algebra and deformation problem}

The motivation of this note is a formulation of 
deformation quantization problem of vertex Poisson algebra.
In order to state our deformation problem,
let us begin with the recollection of vertex Poisson algebra.

\subsubsection{Vertex Poisson algebra}

Let us recall the notion of vertex Poisson algebra 
following \cite[Chap.\ 16]{FBZ}.
We will use the notation $(V,\ket{0},T,Y)$ for a vertex algebra 
following  \cite[Chap.\ 1]{FBZ},
and work over $\bbC$ in this introduction.

A \emph{vertex Lie algebra} is a triple $(L, T, Y_{-})$ 
consisting of 
\begin{itemize}
\item
a vector space $L$,
\item
a linear endomorphism $T \in \End(L)$
\item
a series of endomorphisms
$Y_{-}(A,z) = \sum_{n\ge0}A_{(n)} z^{-n-1} \in 
 \End(L) \otimes z^{-1} \bbC[[z^{-1}]]$
for each $A \in L$
\end{itemize}
satisfying some conditions which we omit.

The axiom of vertex Lie algebra is built 
so that the following statement holds.
For any vertex algebra  $(V,\ket{0},T,Y)$,
its polar part $(V,T,Y_{-})$ 
with $Y_{-}(A,z) := Y(A,z)_{-}$  is a vertex Lie algebra.
Here we set $f(z)_{-} := \sum_{n<0} f_n z^n$ 
for a given series $f(z) = \sum_{n \in \bbZ} f_n z^n$.

A vertex algebra $(V,\ket{0},T,Y)$ is called \emph{commutative}
if for any $A,B \in V$
$[Y(A,z),Y(B,w)]=0$.
It is known \cite{B}, \cite[Chap.\ 1]{FBZ} 
that a commutative vertex algebra $(V,\ket{0},T,Y)$ is equivalent 
to a commutative  $\bbC$-algebra $(V ,\circ)$ 
with unit $\ket{0}$ and a derivation $T$.
The equivalence is given by
\[
 A \circ B = A_{(-1)}B;\quad
 Y(A,z)B = e^{z T}A \circ B.
\]

A \emph{vertex Poisson algebra} is a quintuple 
$(V,\ket{0},T,Y_{+},Y_{-})$ such that
\begin{enumerate}
\item 
$(V,\ket{0},T,Y_{+})$ is a commutative vertex algebra, 
\item
$(V,T,Y_{-})$ is a vertex Lie algebra,
\item
all the coefficients of $Y_{-}(A,z)$ are 
derivations of the the commutative product on $V$ 
induced by $Y_{+}$.
\end{enumerate}

There is a natural construction of 
vertex Poisson algebras from vertex algebras,
which we call the \emph{limit construction}.
Assume that $(V^\hbar,Y^\hbar)$ is a flat family of 
vertex algebras over $\bbC[[\hbar]]$,
and that $V^0 := V^\hbar/\hbar V^\hbar$ is a 
commutative vertex algebras over $\bbC$
with $Y^0 := Y^\hbar \pmod{\hbar}$.
Then $V^0$ is a vertex Poisson algebras with 
\[
 Y_{-}(A,z) := \tfrac{1}{\hbar} 
 Y^\hbar(\overline{A},z)_{-} \pmod{\hbar},
\]
where $\overline{A} \in V^\hbar$ is a lift of $A \in V^0$.

Let us give two examples of limit construction of vertex Poisson algebra.
The first is $V_\infty(\g) := V_K(\g)/K^{-1}V_K(\g)$,
the limit of the affine vertex algebra $V_K(\g)$
for a finite Lie algebra $\g$.
Here we consider the level to be an indeterminate $K$,
and $V_K(\g)$ to be defined over $\bbC[K^{\pm1}]$.
It describes the Poisson structure on the space 
of connections on the trivial $G$-bundles on the punctured disc.
The second one is $W_\infty(\g) := W_K(\g,e_{\reg})/K^{-1}W_K(\g,e_{\reg})$,
the limit of the $W$ algebra $W_K(\g,e_{\reg})$ 
associated to a finite dimensional $\g$ and the regular nilpotent element $e_{\reg}$.
It presents the Poisson structure on the space of opers.

\subsubsection{Ad-hoc formulation  of our deformation problem}

Once one knows the limit construction of vertex Poisson algebra,
it is natural to ask the following question.
Given a vertex Poisson algebra $(V^0,\ket{0},T,Y_{+},Y_{-})$,
classify vertex algebras $(V^\hbar,\ket{0},T,Y^\hbar)$ 
flat over $\bbC[[\hbar]]$ 
such that
\[
 Y = Y_{+} + \hbar Y_{-} + \hbar^2 Y_2 + \cdots.
\]

This problem looks similar to the \emph{deformation quantization problem} 
of (usual) Poisson algebras.
Namely, given a Poisson algebra $(A,\circ, \{ \ \})$ over $\bbC$,
classify associative algebras $(A,*)$ over $\bbC[[\hbar]]$  
such that
\begin{align*}
a * b 
= a \circ b + \hbar \{a,b\} + \hbar^2 \alpha_2(a,b) + \cdots 
= \sum_{n\ge0} \hbar^n \alpha_n(a,b)
\end{align*}
with $\alpha_n\in \Hom(A^{\otimes 2},A)$.
Hereafter $\Hom$ means the space of $\bbC$-linear map.

\subsubsection{Deformation quantization}

Now let us briefly recall the usual deformation quantization 
(see  \cite[\S1, \S3]{K} for a detailed explanation). 

Given a Poisson algebra $(A,\circ, \{ \, \})$,
two deformations $(A,*_1)$ and $(A,*_2)$ 
are called \emph{equivalent} if 
there exists 
$\varphi=\sum_{n\ge0}
 \hbar^n \varphi_n \in \End(A) [[\hbar]]$
such that
$\varphi(a *_1 b) = \varphi(a) *_2 \varphi(b)$.

The equivalent class of deformations are 
described by the \emph{Hochschild complex}
$C^\bullet(A,A) = (\oplus_{n\ge0} C^n(A,A), d)$,
and the \emph{Hochschild cohomology}
$H^\bullet(A,A)$.
Recall that the Hochschild complex is given by 
\begin{align}
\label{eq:Hochschild}
&C^n(A,A) := \Hom(A^{\otimes n}, A), 
\\
\nonumber
&d f(a_0, \ldots, a_n)
 := a_0 \circ f(a_1, \ldots, a_n) 
    +\sum_{i=1}^n (-1)^i 
    f(a_0, \ldots, (a_{i-1} \circ a_i), \ldots,a_n) 
    + (-1)^{n+1} f(a_0, \ldots, a_{n-1}) \circ a_n.
\end{align}
$H^\bullet(A,A)$ is the cohomology of this complex.
As for the deformations of $(A,\circ,\{\,\})$, we have
\begin{itemize}
\item 
the equivalence class of $\alpha_1 = \{ \, \}$ is 
an element of $H^2(A,A)$,
\item
using the Gerstenhaber bracket $[\ ]'$ given by
\[
 \tfrac{1}{2} [\alpha_i,\alpha_j]'(a,  b, c)
 := \alpha_i(\alpha_j(a, b), c)
  - \alpha_i(a, \alpha_j(b,c)),
\]
the associativity of $*$ is rewritten as 
\[
 d \alpha_m + \dfrac{1}{2}\sum_{i+j=m}
 [\alpha_i,\alpha_j]' = 0.
\]
\end{itemize}

These properties can be expressed in terms of \emph{dg Lie algebra}.
The equations  
$d \alpha_m + \tfrac{1}{2}\sum_{i+j=m}
 [\alpha_i,\alpha_j]' = 0$
can be rewritten as the \emph{Maurer-Cartan equation}
\begin{equation}\label{eq:MC}
  d \alpha + [\alpha,\alpha] = 0,\quad 
 \alpha = \sum_n  \alpha_n \in \g^1
\end{equation}
of the dg Lie algebra $\g=(C^\bullet(A;A),[\ ],d)$ 
associated to the Hochschild complex.
Here the grading is 
\[
\g = \oplus_{n \ge -1} \g^n,\quad
 \g^n := C^{n+1}(A,A)= \Hom(A^{\otimes(n+1)},A).
\]
The Lie bracket $[ \, ]$ is given by 
\begin{align*}
&[\alpha,\beta]:=
 \alpha \circ \beta - (-1)^{|\alpha| \cdot |\beta|} \beta \circ \alpha,
\\
&(\alpha \circ \beta) (a_0, \ldots, a_{|\alpha| +|\beta|})
:= \sum_{r=0}^{|\alpha|} (-1)^{r |\beta|} 
 \alpha (a_0, \ldots, a_{r-1},
  \beta(a_{r}, \ldots, a_{r+|\beta|}),
  a_{r+|\beta|+1}, \ldots, a_{|\alpha| +|\beta|} ).
\end{align*}

Thus a deformation quantization of a Poisson algebra 
is the problem to find and classify solutions 
$\alpha = \sum_{n\ge0} \alpha_n$ of 
Maurer-Cartan equation of the dg Lie algebra $C^\bullet(A,A)$ 
with $\alpha_0$ and $\alpha_1$ equal to the given $\circ$ and $\{\ \}$.

\subsubsection{Main problem: the dg Lie algebra for our deformation problem}

Now going back to our situation,
it is natural to ask 
what the dg Lie algebra controlling deformations of vertex Poisson algebra is.

One may find two hints in the literature.
\begin{itemize}
\item 
According to the \emph{theory of operad},
one can construct a  dg Lie algebra 
controlling the deformations of $\opP$-algebras 
for any Koszul operad $\opP$.
\item
According to the \emph{theory of chiral algebra} by Beilinson and Drinfeld,
vertex algebras and vertex Poisson algebras can be formulated in terms of operads.
\end{itemize}

\subsection{Recollection of Operads}

Here we  briefly explain notations 
and some basic facts of operads.
We refer \cite[Chap.\ 5--6]{LV} for the fundamentals of operads.
We denote by $\frkS_n$ the $n$-th symmetric group. 

\subsubsection{Basic notions}

An \emph{$\frkS$-module} is a series $M = \{M(n)\}_{n\ge0}$ of 
right $\frkS_n$-modules.
We also denote $M$ as $M=\oplus_{n\ge0} M(n)$.

The first example of $\frkS$-module is the \emph{identity $\frkS$-module}
$I= 0 \oplus \bbC \oplus  0 \oplus 0 \oplus \cdots$ 

For two  $\frkS$-modules $M$ and $N$,  define 
another $\frkS$-module $M \circ N$ by
$M \circ N := \oplus_n M(n) \otimes_{\frkS_n} N^{\otimes n}$.

A \emph{morphism} of $\frkS$-modules 
is a series of $\frkS_n$-module homomorphisms.

An $\frkS$-module $\opP$ with $\opP(0)=0$ is called \emph{reduced}.
A reduced $\frkS$-module is sometimes denoted as $\opP=\oplus_{n\ge1}\opP(n)$.

An \emph{operad} is a triple 
\[
 \opP=(\opP, \gamma,\eta)
\] 
of 
\begin{itemize}
\item 
an $\frkS$-module $\opP=\oplus_{n\ge0} \opP(n)$
consisting of spaces of $n$-ary operations,
\item
an $\frkS$-module morphism 
$\gamma: \opP \circ \opP \to \opP$
called the \emph{composition map},
\item
an $\frkS$-module morphism $\eta: I \to \opP$
called the unit.
\end{itemize}
satisfying some compatibility conditions.
A \emph{morphism of operads} is defined to be 
a morphism of underlying $\frkS$-modules 
which are compatible with $\gamma$'s  and $\eta$'s.

Let us denote by
$\opAss$, $\opCom$ and $\opLie$
the operads of commutative algebras,
(non-commutative) associative algebras
and  Lie algebras respectively.
For each of these operads, denoted as $\opP$,
we have an element $\mu \in \opP(2)$ 
corresponding to the binary operation 
(associative product, commutative product and Lie bracket).

We also have 
the \emph{operad of endomorphisms} on a vector space $V$.
Set the $\frkS$-module $\opEnd_V$ by 
\[
  \opEnd_V:=\oplus_{n\ge0} \opEnd_V(n),\quad
  \opEnd_V(n) := \Hom_{\bbC}(V^{\otimes n},V).
\]
Then the composition of linear endomorphisms 
gives $\opEnd_V$ a natural structure of operad.

Now for an operad $\opP$ and a vector space $V$,
a \emph{$\opP$-algebra structure} on $V$ 
is an operad  morphism
\[
 \opP \longto \opEnd_V.
\]
After a moment thought, one finds that for 
$\opP=\opAss$, $\opCom$ and $\opLie$,
an $\opP$-algebra is nothing but the usual 
associative, commutative and Lie algebra respectively.

We will also need \emph{cooperads}, dually defined as operads.
A cooperad is a triple 
\[
 \opC = (\opC, \Delta,\ve)
\] 
consisting of a $\frkS$-module $\opC$, the \emph{decomposition map}
$\Delta: \opC \to \opC \circ \opC$
and the counit map $\ve: \opC \to I$.

\subsubsection{Koszul dual (co)operads}

Let us denote by  $\opF(E)$ 
the free operad of an $\frkS$-module $E$.
We will give a brief account in \S\ref{sss:bar} and \S\ref{sss:Koszul-dual}.
It has a \emph{weight} grading $\opF(E)= \oplus_{d\ge0} \opF(E)^{(d)}$.
In a dual way, we have the \emph{free cooperad} 
$\opF^c(E)$ of an $\frkS$-module $E$. 

Recall the notion of \emph{quadratic operad}.
A quadratic data $(E,R)$ is a pair of 
an $\frkS$-module $E$ and a sub-$\frkS$-module $R \subset \opF(E)^{(2)}$.
Now the quotient 
\[
 \opP(E,R):=\opF(E)/(R)
\]
has a structure of operad, and called 
the \emph{quadratic operad} for $(E,R)$.
The operads $\opAss$, $\opCom$ and $\opLie$ are standard examples.
In a dual way,
we have the \emph{quadratic cooperad}  $\opC(E,R)$ 
of the quadratic data $(E,R)$.

The notions mentioned so far have \emph{dg version}.
Namely, a dg $\frkS$-module is a series of complexes 
with right action of $\frkS_n$.
Similarly, one can define a dg (co)operad, 
a dg (co)operad of endomorphisms 
and and a dg free (co)operad.

Let us denote by $s$ the shift of a complex in the following way. 
\[
 (s M)_p = M_{p-1}.
\]
Then the \emph{Koszul dual cooperad} 
of a quadratic operad $\opP=\opP(E,R)$ is given by 
\[
 \opP^{c!} := \opC(s E, s^2 R).
\]
$\opP^{c!}$ 
has a weight grading similarly 
as the free operad $\opF(E)$.


\subsubsection{Convolution dg Lie algebra}
\label{subsubsec:convol-dgla}

We recall the convolution dg Lie algebra following 
\cite[Chap.\ 6]{LV}.

There is a good class of quadratic operads called 
\emph{Koszul operads}.
The operads $\opAss$, $\opCom$ and $\opLie$ are Koszul.

\begin{fct}[{\cite[Chap.\ 6]{LV}}]
\label{fct:convol}
For a Koszul operad $\opP$ and a vector space $V$,
the vector space 
\[
 \g \equiv \g_{\opP,V} \equiv \Hom_{\frkS}(\opP^{c!},\opEnd_V)
 := \oplus_{n\ge0} \Hom_{\frkS_n}(\opP^{c!}(n),\opEnd_V(n))
\]
has a natural structure of dg Lie algebra
$(\g, [ \ ], \partial)$. 
\end{fct}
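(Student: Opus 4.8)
The plan is to exhibit $\g$ as the \emph{convolution Lie algebra} attached to the Koszul dual cooperad $\opP^{c!}$ and the endomorphism operad $\opEnd_V$, obtaining the bracket $[\ ]$ by antisymmetrizing an auxiliary pre-Lie product and the differential $\partial$ from the internal differentials of the two (co)operads. The organizing principle is that $\Hom_{\frkS}(\opP^{c!},\opEnd_V)$ first acquires the structure of a \emph{convolution operad}, and any operad gives rise to a pre-Lie algebra via total infinitesimal composition; but it is cleaner to write the pre-Lie product directly.

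First I would define, for $f,g \in \g$, the product
\[
 f \star g
 := \Bigl( \opP^{c!} \xrightarrow{\Delta_{(1)}} \opP^{c!} \ccone \opP^{c!}
 \xrightarrow{f \ccone g} \opEnd_V \ccone \opEnd_V
 \xrightarrow{\gamma_{(1)}} \opEnd_V \Bigr),
\]
where $\Delta_{(1)}$ is the infinitesimal decomposition map of the cooperad $\opP^{c!}$ and $\gamma_{(1)}$ is the infinitesimal composition map of $\opEnd_V$. One checks that $f \star g$ is again $\frkS$-equivariant, so that it lands in $\g$, and that its degree is $|f|+|g|$ with respect to the grading induced by the weight grading of $\opP^{c!}=\opC(sE,s^2R)$; the requisite signs are the Koszul signs coming from the two suspensions.

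The second, and principal, step is to verify the (right) \emph{pre-Lie identity}
\[
 (f\star g)\star h - f\star(g\star h)
 = (-1)^{|g|\,|h|}\bigl((f\star h)\star g - f\star(h\star g)\bigr).
\]
This reduces to the coassociativity of $\Delta_{(1)}$ on $\opP^{c!}$ and the associativity of $\gamma_{(1)}$ on $\opEnd_V$, together with the compatibility that distinguishes \emph{nested} from \emph{disjoint} infinitesimal compositions: the associator of $\star$ collects exactly the nested terms, which are graded-symmetric in $g$ and $h$, while the disjoint terms cancel. I expect the bookkeeping of the Koszul signs through the suspensions in $\opP^{c!}$ to be the main obstacle here; once the infinitesimal (co)composition maps are written out explicitly, the rest is formal.

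Finally, the Lie bracket is obtained by antisymmetrization,
\[
 [f,g] := f\star g - (-1)^{|f|\,|g|}\, g\star f,
\]
and the graded Jacobi identity follows from the pre-Lie identity by the standard argument, since the pre-Lie associator is graded-symmetric in its last two slots and so drops out of the Jacobiator. For the differential I would take $\partial$ to be the internal Hom-differential
\[
 \partial f := d_{\opEnd_V}\circ f - (-1)^{|f|}\, f\circ d_{\opP^{c!}},
\]
induced by the differentials on $\opEnd_V$ and on $\opP^{c!}$; because $\Delta_{(1)}$ and $\gamma_{(1)}$ are morphisms of dg (co)operads, $\partial$ is a derivation of $\star$, hence of $[\ ]$, and $\partial^2=0$ follows from $d^2=0$ on each factor. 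This produces the dg Lie algebra $(\g,[\ ],\partial)$ as claimed.
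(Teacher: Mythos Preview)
Your approach is essentially the paper's own: in \S\ref{subsec:conv-dgla} the paper defines the pre-Lie product by exactly your formula $f\star g := \gamma_{(1)}\circ(f\ccone g)\circ\Delta_{(1)}$, verifies the pre-Lie identity (Proposition~\ref{prop:preLie}) by an explicit computation with the expansion~\eqref{eq:star}, antisymmetrizes to get the bracket, and equips the result with the internal Hom-differential of Definition~\ref{dfn:dgHom}.

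One small correction to your sketch of the pre-Lie verification: you write that the associator ``collects exactly the nested terms, which are graded-symmetric in $g$ and $h$, while the disjoint terms cancel.'' It is the other way around. In $(f\star g)\star h$ the element $h$ lands either in a slot of $f$ disjoint from $g$, or inside $g$; the nested contributions are precisely $f\star(g\star h)$ and cancel in the associator, so what survives are the \emph{disjoint} terms with $g$ and $h$ occupying separate slots of $f$, and it is these that are manifestly (graded-)symmetric in $g$ and $h$. This is exactly what the computation in the proof of Proposition~\ref{prop:preLie} exhibits.
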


It is called the \emph{convolution dg Lie algebra}.
Note that it has a double grading.
One is  the grading 
$\g= \oplus_{n} \g^n$ as a complex, 
and the other is the weight grading $\g= \oplus_{d\ge0} \g^{(d)}$
induced by that on $\opP^{c!}$.

Now recall the Maurer-Cartan equation \eqref{eq:MC} 
which is defined for  any dg Lie algebra $\g=\oplus_{n}\g^n$.
Define 
\[
 \Tw(\g) := 
 \{ \text{degree $n=-1$ solutions of the Maurer-Cartan equation of $\g$} \}.
\]

\begin{fct}[{\cite[Proposition 10.1.4]{LV}}]
\label{fct:P-alg}
For a Koszul operad $\opP$ and a vector space $V$, 
\begin{align*}
\{ \text{ $\opP$-algebra structures on $V$} \} 
\ \stackrel{\ \ 1:1 \ \ }{\longleftarrow \joinrel \longrightarrow} \ 
 \MC(\g_{\opP,V}) := 
 \{ \text{weight $d=1$ elements in $\Tw(\g_{\opP,V})$} \}
\end{align*}
\end{fct}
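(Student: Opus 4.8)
The plan is to unwind the Maurer--Cartan equation weight by weight, matching weight one with the generators $E$ and weight two with the relations $R$. Write $\opP = \opP(E,R) = \opF(E)/(R)$, so that $\opP^{c!} = \opC(sE, s^2R)$ has $(\opP^{c!})^{(1)} = sE$ and $(\opP^{c!})^{(2)} = s^2 R \subset \opF(sE)^{(2)}$. A weight $d=1$, complex degree $-1$ element $\alpha \in \g_{\opP,V}$ is by definition a degree $-1$ morphism $sE = (\opP^{c!})^{(1)} \to \opEnd_V$ of $\frkS$-modules; since $s$ raises homological degree by one while $\opEnd_V$ sits in degree $0$, desuspension identifies $\alpha$ with a degree $0$ morphism $\beta \colon E \to \opEnd_V$. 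First I would invoke the universal property of the free operad to extend $\beta$ uniquely to an operad morphism $\tilde\beta \colon \opF(E) \to \opEnd_V$.

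Next I would solve the Maurer--Cartan equation. As neither the cooperad $\opP^{c!}$ nor $\opEnd_V$ (with $V$ an ordinary vector space) carries an internal differential, the differential $\partial$ on $\g_{\opP,V}$ vanishes, and the equation collapses to $[\alpha,\alpha] = 0$, i.e. $\alpha \star \alpha = 0$ for the convolution product built from the infinitesimal decomposition $\Delta_{(1)} \colon \opP^{c!} \to \opP^{c!} \ccone \opP^{c!}$ and the infinitesimal composition $\gamma_{(1)}$ of $\opEnd_V$. Because $\alpha$ is concentrated in weight $1$ and $\Delta_{(1)}$ respects the weight grading, $\alpha \star \alpha$ is concentrated in weight $2$, where it is the single composite
\begin{equation*}
(\opP^{c!})^{(2)} \xrightarrow{\ \Delta_{(1)}\ } sE \ccone sE \xrightarrow{\ \beta\,\ccone\,\beta\ } \opEnd_V \ccone \opEnd_V \xrightarrow{\ \gamma_{(1)}\ } \opEnd_V .
\end{equation*}
The crux is to show that, under $(\opP^{c!})^{(2)} = s^2 R$ and $\opF(E)^{(2)} \cong E \ccone E$, the vanishing of this composite is exactly the condition that $\tilde\beta$ annihilate $R$.

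Granting this, the Maurer--Cartan equation for a weight-one, degree-$(-1)$ element $\alpha$ is equivalent to $\tilde\beta|_R = 0$, hence to $\tilde\beta$ factoring through $\opP(E,R) = \opF(E)/(R)$; the induced operad morphism $\opP \to \opEnd_V$ is precisely a $\opP$-algebra structure on $V$. The assignment is bijective, its inverse sending a structure $\opP \to \opEnd_V$ to the restriction along $E \hookrightarrow \opF(E) \twoheadrightarrow \opP$, resuspended into weight one. The main obstacle is the computation in the second step: one must carry the suspension signs from $sE$ and $s^2R$ through $\Delta_{(1)}$ and check that the weight-two infinitesimal decomposition of $\opC(sE,s^2R)$ reproduces, up to sign, the defining inclusion $R \hookrightarrow \opF(E)^{(2)}$. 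A cleaner but less self-contained alternative is the bar--cobar adjunction: twisting morphisms $\opP^{c!} \to \opEnd_V$ correspond to operad morphisms $\Omega\opP^{c!} \to \opEnd_V$, and for a Koszul operad the Koszul resolution $\Omega\opP^{c!} = \opP_\infty \simto \opP$ identifies the weight-one twisting morphisms with those factoring through the strict quotient $\opP$.
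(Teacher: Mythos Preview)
Your proposal is correct. Note, however, that the paper does not give its own proof of this statement: it is recorded as a \emph{Fact} with a citation to \cite[Proposition~10.1.4]{LV}, and in \S\ref{subsec:mc-alg} the paper only sketches the context by invoking the bar--cobar adjunction (Fact~\ref{fct:OmTwB}) to identify $\Tw(\opB^{c!},\opEnd_M)$ with $\Hom_{dgOp}(\Omega\opB^{c!},\opEnd_M)$, i.e.\ with $\opB_\infty$-algebra structures, and then simply asserts that concentration in weight~$1$ singles out the strict $\opB$-algebras. Your main argument takes a genuinely different, more self-contained route: you bypass the cobar construction and the Koszul quasi-isomorphism entirely, working directly with the quadratic presentation $\opP=\opP(E,R)$ and matching the weight-$1$ part of $\opP^{c!}$ with $E$ and the weight-$2$ Maurer--Cartan constraint with the relations $R$. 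This is more elementary and makes the role of Koszulness less visible (indeed your direct argument uses only the quadratic data, not acyclicity of the Koszul complex), whereas the bar--cobar route you sketch at the end is precisely the outline the paper follows. The sign bookkeeping you flag as the main obstacle is real but routine; the identification $(\opP^{c!})^{(2)}=s^2R$ together with $\Delta_{(1)}$ restricted to weight~$2$ being the suspended inclusion $s^2R\hookrightarrow sE\ccone sE$ is exactly what is needed, and is part of the standard description of quadratic cooperads.
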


Once a weight $1$ solution $\mu$ of the Maurer-Cartan equation is given,
we have a twisted dg Lie algebra
\[
  \g^\mu_{\opP,V}
  := (\Hom(\opP^{c!},\opEnd_V), [ \ ], \partial^\mu := \partial + [\mu,-]).
\]
and it encodes the deformation of $\opP$-algebra structure $\mu$.
\begin{align*}
&\MC(\g^\mu_{\opP,V})
  \stackrel{\ 1:1 \ }{\longleftarrow \joinrel \longrightarrow} 
 \{\text{$\opP$-algebra structures deforming $\mu$} \}. 
\end{align*}
For example,
if $\opP = \opAss$, then $\mu \in \MC(\g_{\opAss,V})$ 
is an associative product on $V$,
and $\g^\mu_{\opAss,V}$ coincides with the Hochschild complex 
\eqref{eq:Hochschild}
for $A=(V,\mu)$.

\subsection{Chiral dg Lie algebra}

The theory of \emph{chiral algebras} by
Beilinson and Drinfeld \cite{BD}
is an operadic formulation of vertex algebras.
One can apply to it the general construction of convolution dg Lie algebra
in the previous subsection.
The obtained dg Lie algebra is what we look for.

\subsubsection{Chiral algebras}

Here we briefly spell out what a chiral algebra is,
and defer a detailed explanation to \S \ref{sect:CA}.

Let $X$ be a smooth curve.
$\catM(X)$ denotes the category of right $\shD_X$-modules 
(quasi-coherent as $\shO_X$-modules) on $X$.
For $n \in \bbZ_{\ge1}$ denote by
$\Delta^{(n)}: X \longhookrightarrow X^n$ the diagonal embedding and 
$j^{(n)}: U^{(n)}:=\{(x_i) \in X^n \mid x_i \neq x_j \ (\forall \, i \neq j)\}
 \longhookrightarrow X^n$
the complement of diagonal divisors.

For $M \in \catM(X)$, 
consider a $\frkS$-module 
$\opEnd^{\ch}_M = \oplus_n \opEnd^{\ch}_M(n)$ given by 
\[
 \opEnd^{\ch}_M(n) := 
 \Hom_{\catM(X^n)}(j^{(n)}_* j^{(n) \, *} M^{\boxtimes n}, 
 \Delta^{(n)}_* M)
\]
It has an operad structure, which we call 
the \emph{chiral operad} on $M$.


A \emph{chiral algebra} structure (without unit) 
on $M \in \catM(X)$ is an operad morphism
\[
 \varphi: \opLie \longto \opEnd^{\ch}_M.
\]
Denoting $\mu_{\Lie} \in \opLie(2)$ the binary operation 
corresponding to the Lie bracket,
we call Its image $\varphi(\mu_{\Lie}) \in \opEnd^{\ch}_M(2)$
the \emph{chiral bracket}.

Now we recall the relation between vertex algebras 
and chiral algebras.
By \cite[Chap.\ 5]{FBZ},
one can construct from a vertex algebra $V$ a locally free sheaf $\shV$ 
on a smooth curve $X$.
If $V$ is quasi-conformal, then $\shV$ is a left $\shD_X$-module. 
Denote by 
\[
 \shY^2: j^{(2)}_* j^{(2) *} \shV^{\boxtimes 2} \longto \Delta^{(2)}_*(\shV)
\] 
the morphism of left $\shD$-modules induced by $Y$.
Locally it is given by 
\[
 \shY^2_x(f(z,w) A \boxtimes B) = f(z,w)Y(A,z-w)B \pmod{ V[[z,w]] }. 
\]
Also denote by $\omega_X$ the canonical sheaf on $X$.
Given a left $\shD_X$-module $M$,
one has a right $\shD_X$-module 
\[
 M^r := M \otimes_{\shO_X} \omega_X.
\]

\begin{fct}[{\cite{BD},  \cite[Chap.\ 19]{FBZ}}]
\label{fct:VA=CA}
For a quasi-conformal vertex algebra $V$,
the right $\shD$-module $\shV^r := \shV \otimes \omega_X$
has a structure of chiral algebra.
The chiral bracket $\mu \in \opEnd^{\ch}_{\shV^r}(2)$ 
is given by $\mu=(\shY^2)^r$.
\end{fct}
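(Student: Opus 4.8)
The plan is to realize the asserted chiral algebra structure as an operad morphism $\varphi\colon \opLie \to \opEnd^{\ch}_{\shV^r}$ and to reduce its existence to two relations on a single binary operation. Since $\opLie = \opP(E,R)$ is the quadratic operad generated by one antisymmetric binary operation (the sign representation of $\frkS_2$ in arity $2$) modulo the Jacobi relation in arity $3$, an operad morphism out of $\opLie$ is the same datum as an $\frkS_2$-equivariant element $\mu \in \opEnd^{\ch}_{\shV^r}(2)$ whose self-composite satisfies the chiral Jacobi identity in $\opEnd^{\ch}_{\shV^r}(3)$; such a $\mu$ then extends uniquely to $\varphi$ with $\varphi(\mu_{\Lie}) = \mu$. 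Hence it suffices to produce one $\mu$ and to verify antisymmetry and Jacobi, and the second assertion of the statement (that the chiral bracket is $(\shY^2)^r$) will hold by construction.

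First I would construct the candidate bracket. The map $\shY^2$ is a morphism of left $\shD_{X^2}$-modules $j^{(2)}_* j^{(2)*}\shV^{\boxtimes 2} \to \Delta^{(2)}_*(\shV)$. Applying the side-changing equivalence $M \mapsto M^r = M \otimes_{\shO_X} \omega_X$ between left and right $\shD$-modules, together with its standard compatibilities with $\boxtimes$ (via $\omega_{X^2} = \omega_X \boxtimes \omega_X$), with the localization $j^{(2)}_* j^{(2)*}$, and with the diagonal pushforward $\Delta^{(2)}_*$, one gets a morphism of right $\shD_{X^2}$-modules
\[
 \mu := (\shY^2)^r \colon
 j^{(2)}_* j^{(2)*}(\shV^r)^{\boxtimes 2} \longto \Delta^{(2)}_*(\shV^r),
\]
that is, an element $\mu \in \opEnd^{\ch}_{\shV^r}(2)$ as required. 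The identifications $(\shV^{\boxtimes 2})^r = (\shV^r)^{\boxtimes 2}$ and $(\Delta^{(2)}_*\shV)^r = \Delta^{(2)}_*(\shV^r)$ are the routine $\shD$-module input here. For antisymmetry, the $\frkS_2$-action on $\opEnd^{\ch}_{\shV^r}(2)$ exchanges the two points and the two tensor factors; I would verify $\mu^{\sigma} = -\mu$ locally after choosing a coordinate $z$, where $\shY^2$ is governed by the singular part of $Y(A,z-w)B$ and the swap is implemented by the skew-symmetry $Y(A,z)B = e^{zT}Y(B,-z)A$ of the vertex algebra, the operator $e^{zT}$ being exactly the translation/connection datum making $\shV$ a left $\shD_X$-module. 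Tracking signs through the side change then yields the required antisymmetry.

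The hard part is the chiral Jacobi identity, and this is where the genuine content of the Beilinson--Drinfeld correspondence lies. One forms the self-composite $\gamma(\mu;\mu,\id) \in \opEnd^{\ch}_{\shV^r}(3)$, supported on the partial and total diagonals of $X^3$, and the Jacobi relation asks that the cyclic sum of its $\frkS_3$-rotations vanish. To check this I would work locally on $U^{(3)}$ in a coordinate: a section of $j^{(3)}_*j^{(3)*}(\shV^r)^{\boxtimes 3}$ is $f(z_1,z_2,z_3)\,A\boxtimes B\boxtimes C$ with poles along $z_i = z_j$, and the composite is computed by successively extracting the singular part of $Y(A,z_1-z_2)B$ across the diagonal $z_1=z_2$ and then pairing with $C$ at $z_3$, producing the singular part of an iterated operator-product expression. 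The two orderings, together with the composite bracketed through the middle diagonal, are interchanged precisely by the two expansions of the rational functions $1/(z_i-z_j)$, so that the vanishing of the cyclic sum is equivalent to the locality plus associativity of $Y$, i.e.\ to the Borcherds (Jacobi) identity of $V$. The main obstacle is the bookkeeping that makes this dictionary exact: computing the chiral operad composition explicitly through the nested $j_*j^*$ and $\Delta_*$ operations, and matching the resulting delta-supported formal-variable identity on $X^3$ with the vertex-algebra Jacobi identity while keeping track of the $\shD$-module structure implementing $e^{(z-w)T}$.

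Once antisymmetry and Jacobi are established, the quadratic presentation of $\opLie$ produces the operad morphism $\varphi\colon \opLie \to \opEnd^{\ch}_{\shV^r}$ with $\varphi(\mu_{\Lie}) = \mu = (\shY^2)^r$, giving $\shV^r$ a (non-unital) chiral algebra structure with the stated chiral bracket. Since the verification above is exactly the equivalence proved in \cite{BD} and \cite[Chap.\ 19]{FBZ}, one may alternatively simply invoke those references; the self-contained argument proceeds as sketched.
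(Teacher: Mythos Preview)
The paper does not prove this statement at all: it is recorded as a \emph{Fact} with citations to \cite{BD} and \cite[Chap.~19]{FBZ}, and no argument is given. Your proposal therefore goes well beyond the paper's own treatment, and the sketch you give is essentially the argument one finds in those references: reduce to a single antisymmetric binary operation satisfying chiral Jacobi, construct $\mu=(\shY^2)^r$ by side-change, and identify antisymmetry and Jacobi with the skew-symmetry and Borcherds identity of $V$.

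One point to flag: in this paper a chiral algebra is by definition a $\opLie^{\ch}$-algebra \emph{with unit}, so to match the statement you should also produce the unit morphism $1_{\shV^r}\colon \omega_X \to \shV^r$ (coming from the vacuum $\ket{0}\in V$) and check that $\mu(1_{\shV^r},\id)$ is the unit operation $\ve_{\shV^r}$; you explicitly stop at the non-unital structure. This is a minor omission and is likewise handled in the cited references via the vacuum axiom of $V$.
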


\subsubsection{Coisson algebras}
\label{subsubsec:intro:coisson}

In \cite[\S2.6]{BD},
an operad structure 
corresponding to a vertex Poisson algebra is defined.
we call it the \emph{compound operad} and denote it by 
$\opEnd^c_M$ for $M \in \catM(X)$.
Its underlying $\frkS$-module is given by 
\begin{align*}
 \opEnd^c_M(n) = \oplus_{S \in Q([n])} \opEnd^c_M(n)_S, \quad
 \opEnd^c_M(n)_S := 
 \Hom_{\catM(X^S)}(M^{\boxtimes S},\Delta^{(S)}_* M)
 \otimes (\otimes_{t \in S} \opLie(|\pi_S^{-1}(t)|)).
\end{align*}
Here $Q([n])$ denotes the set of equivalence classes of 
surjections $\pi_S: [n] \longtwoheadrightarrow S$ from
the set $[n]=\{1,\ldots,n\}$,
and $\Delta^{(S)}: X \hookrightarrow X^S$ 
denotes the diagonal embedding.

A \emph{coisson algebra} structure on $M\in\catM(X)$ 
is an operad morphism
\[
 \opLie \longto \opEnd^c_M.
\]

Let us explain the relation between
coisson algebras and chiral algebras.
$\opEnd^{\ch}_M$ has a filtration $W^\bullet$ such that 
\[
 \opEnd^{\ch}_M(n) = W^0 \supset W^{-1} \supset \cdots \supset W^{-n} \supset 
 W^{-n-1}=0,
\]
and we have an inclusion of operads
\[
 \gr_{W} \opEnd^{\ch}_M \longhookrightarrow \opEnd^c_M.
\]
The filtration comes from the Cousin complex of $\omega_X$.

As a corollary, 
given a family $A_t$ of chiral algebras flat over $\bbC[[t]]$,
$A_0 := A_t/t A_t$ has a structure of coisson algebra.
Since the bundle of the vertex Poisson algebra 
yields a coisson algebra,
the limit construction of vertex Poisson algebra from 
a quasi-conformal vertex algebra,
as mentioned previously, is a special case of coisson to chiral limit.

\subsection{Chiral dg Lie algebra}

Finally we can introduce our main object. 
As in the previous subsection,
let $X$ denote a smooth curve. 

\subsubsection{Definition of our dg Lie algebra}

Similarly as in Fact \ref{fct:convol}, 
we have the following construction.

\begin{prop*}
For a right $\shD_X$-module $M$, the $\frkS$-module
\[
 \g^{\ch}_{M} := \Hom(\opLie^{c!},\opEnd^{\ch}_M) 
\]
has a structure of dg Lie algebra 
$(\g^{\ch}_{M},[ \ ], \partial=0)$.
\end{prop*}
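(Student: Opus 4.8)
The plan is to obtain this as a direct instance of the convolution dg Lie algebra construction recalled in Fact \ref{fct:convol}, simply replacing the endomorphism operad $\opEnd_V$ by the chiral operad $\opEnd^{\ch}_M$. The essential observation is that the construction of the convolution bracket in \cite[Chap.\ 6]{LV} depends only on two pieces of input: a cooperad $\opC$ and an operad $\opP$ (taken in the symmetric monoidal category of graded $\bbC$-vector spaces); it then produces a dg Lie algebra on $\Hom_{\frkS}(\opC,\opP)$. In our case $\opC = \opLie^{c!}$, which is a cooperad by construction, and $\opP = \opEnd^{\ch}_M$, which has already been equipped with an operad structure in the excerpt. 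Moreover each component $\opEnd^{\ch}_M(n) = \Hom_{\catM(X^n)}(j^{(n)}_* j^{(n)\,*} M^{\boxtimes n}, \Delta^{(n)}_* M)$ is an ordinary $\bbC$-vector space carrying a right $\frkS_n$-action, so $\opEnd^{\ch}_M$ is genuinely an operad of vector spaces and the abstract machinery applies without modification.

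Concretely, I would write down the bracket explicitly. Let $\Delta_{(1)}: \opLie^{c!} \to \opLie^{c!} \ccone \opLie^{c!}$ be the infinitesimal decomposition of $\opLie^{c!}$ induced by its decomposition map, and let $\gamma_{(1)}: \opEnd^{\ch}_M \ccone \opEnd^{\ch}_M \to \opEnd^{\ch}_M$ be the infinitesimal composition extracted from the chiral composition map $\gamma$. For $f,g \in \g^{\ch}_M$ set
\[
 f \star g := \gamma_{(1)} \circ (f \ccone g) \circ \Delta_{(1)},
\]
where $f \ccone g$ denotes the induced map $\opLie^{c!} \ccone \opLie^{c!} \to \opEnd^{\ch}_M \ccone \opEnd^{\ch}_M$, and then define $[f,g] := f \star g - (-1)^{|f| \cdot |g|} g \star f$. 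The verification that $\star$ is a graded pre-Lie product --- whence $[\ ]$ is graded antisymmetric and satisfies the graded Jacobi identity --- is formal and word-for-word the same as for $\opEnd_V$; it uses only the cooperad axioms of $\opLie^{c!}$, the operad axioms of $\opEnd^{\ch}_M$, and the $\frkS$-equivariance of $\Delta_{(1)}$ and $\gamma_{(1)}$.

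It then remains to identify the differential. In Fact \ref{fct:convol} the differential on $\Hom_{\frkS}(\opP^{c!},\opEnd_V)$ is induced by the internal differentials of $\opP^{c!}$ and of $\opEnd_V$. Both of these vanish here: the Koszul dual cooperad $\opLie^{c!}$, being of the form $\opC(sE, s^2R)$, carries only a weight grading and a homological grading produced by the shifts $s$, with no internal differential, while $M$ is an ordinary (non-dg) right $\shD_X$-module and hence $\opEnd^{\ch}_M$ has zero internal differential as well. Therefore $\partial = 0$, as asserted, and the weight and homological gradings of $\opLie^{c!}$ descend to the announced double grading of $\g^{\ch}_M$.

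The only point that genuinely needs attention --- and what I expect to be the main obstacle --- is to be sure that the infinitesimal composition $\gamma_{(1)}$ of the chiral operad behaves exactly like that of an endomorphism operad, since the chiral composition is assembled from pushforwards along diagonals and the $j_* j^*$-construction rather than from plain composition of multilinear maps. But this is precisely subsumed in the (already recalled) statement that $\opEnd^{\ch}_M$ is an operad: once $\gamma$ is an associative, $\frkS$-equivariant operadic composition, $\gamma_{(1)}$ is extracted from it in the standard way and all remaining identities are the formal convolution computation. No analytic input beyond the operad structure of $\opEnd^{\ch}_M$ is needed.
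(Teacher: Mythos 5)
Your proposal is correct and follows essentially the same route as the paper: the paper also obtains $\g^{\ch}_{M}$ by specializing the convolution (pre-)Lie construction of \cite[Chap.\ 6]{LV} to $\opC=\opLie^{c!}$ and $\opP=\opEnd^{\ch}_M$, with the same formula $f\star g=\gamma_{(1)}\circ(f\ccone g)\circ\Delta_{(1)}$ and the same observation that $\partial=0$ because both $\opLie^{c!}$ and $\opEnd^{\ch}_M$ carry null differentials. Your closing remark --- that the only point needing care is that the chiral composition $\gamma$ is a genuine operadic composition, after which everything is formal --- is exactly how the paper handles it, via the general pseudo-tensor-category version of the construction (Corollary \ref{cor:g_CM}).
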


\begin{dfn*}
We call $\g^{\ch}_{M}$ the \emph{chiral Lie algebra}.
\end{dfn*}

Then similarly as in Fact \ref{fct:P-alg},
we have the following description of chiral algebra structure.

\begin{prop*}
For a right $\shD_X$-module $M$, 
\[
 \{\text{chiral algebra structures on $M$}\} 
 \ \stackrel{\ \ 1:1 \ \ }{\longleftarrow \joinrel \longrightarrow} \ 
 \MC(\g^{\ch}_M).
\]
\end{prop*}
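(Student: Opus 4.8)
The plan is to read this statement as the chiral incarnation of Fact \ref{fct:P-alg}, with the chiral operad $\opEnd^{\ch}_M$ taking the place of the endomorphism operad $\opEnd_V$ there. By the definition recalled above, a chiral algebra structure on $M$ is precisely an operad morphism $\opLie \to \opEnd^{\ch}_M$, and by the preceding Proposition the $\frkS$-module $\g^{\ch}_M = \Hom(\opLie^{c!},\opEnd^{\ch}_M)$ already carries its dg Lie algebra structure with $\partial = 0$. Hence the Lie-theoretic input is in place and the only thing to establish is a natural bijection between operad morphisms $\opLie \to \opEnd^{\ch}_M$ and the weight $d=1$, degree $n=-1$ solutions of the Maurer-Cartan equation of $\g^{\ch}_M$, that is, the elements of $\MC(\g^{\ch}_M)$.

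First I would isolate the purely operadic content of Fact \ref{fct:P-alg}. In its proof (\cite[Proposition 10.1.4]{LV}) the target of the convolution is the endomorphism operad $\opEnd_V$, yet neither the convolution bracket of Fact \ref{fct:convol} nor the identification of Maurer-Cartan elements uses anything about the target beyond the operad axioms. I would therefore restate it in target-agnostic form: for the Koszul operad $\opLie$ and an \emph{arbitrary} operad $\mathcal{Q}$ in $\bbC$-vector spaces, the weight $1$ elements of $\MC\bigl(\Hom_{\frkS}(\opLie^{c!},\mathcal{Q})\bigr)$ correspond bijectively and naturally to operad morphisms $\opLie \to \mathcal{Q}$. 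Since each $\opEnd^{\ch}_M(n)=\Hom_{\catM(X^n)}(j^{(n)}_* j^{(n)\,*}M^{\boxtimes n},\Delta^{(n)}_* M)$ is a $\bbC$-vector space and $\opEnd^{\ch}_M$ carries the chiral operad structure, it is a legitimate such target, and specializing $\mathcal{Q}=\opEnd^{\ch}_M$ yields the Proposition.

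As a consistency check, and to see where the restriction to weight $1$ enters, I would also unwind the bijection directly through the weight grading on $\opLie^{c!}=\opC(sE,s^2R)$. A weight $1$ element of $\g^{\ch}_M$ is supported on the cogenerators $sE$, hence is a single binary chiral operation, namely the chiral bracket; the Maurer-Cartan equation in weight $2$ then reproduces exactly the Jacobi-type relation $R$ that cuts $\opLie$ out of the free operad, so it holds if and only if this binary operation extends to an operad morphism $\opLie \to \opEnd^{\ch}_M$. Dropping the weight $1$ constraint would instead produce $\opLie_\infty$-structures in the chiral operad, which collapse to strict ones precisely because $\opLie$ is Koszul.

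The hard part is not a computation but the soft verification that the convolution formalism transports verbatim to $\opEnd^{\ch}_M$ as target. The preceding Proposition already grants the bracket and $\partial=0$, so concretely what must be confirmed is that the chiral composition built from $j^{(n)}_*$, $j^{(n)\,*}$ and $\Delta^{(n)}_*$ satisfies the operad associativity, unit and $\frkS_n$-equivariance axioms with no extra terms, so that the weight $2$ Maurer-Cartan condition matches, summand by summand, the defining relations of $\opLie$ exactly as in the $\opEnd_V$ case of Fact \ref{fct:P-alg}. Granting this---which belongs to the Beilinson--Drinfeld chiral operad package---the degree and weight bookkeeping is routine and the bijection, together with its naturality in $M$, follows formally.
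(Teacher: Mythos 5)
Your proposal is correct and follows essentially the same route as the paper: the paper develops the convolution dg Lie algebra and Fact \ref{fct:Koszul} (i.e.\ \cite[Proposition 10.1.4]{LV}) in \S\ref{sect:conv} for an arbitrary pseudo-tensor category $\catM$ and the associated operad $\opEnd^{\catM}_M$ --- which is precisely your ``target-agnostic'' reformulation --- and then obtains the Proposition by specializing to $\catM=\catM(X)^{\ch}$, exactly as you do. Your weight-$1$/weight-$2$ unwinding is a correct elaboration of what that citation already contains, and the only convention to keep in mind is that the statement concerns chiral algebra structures \emph{without} unit, i.e.\ operad morphisms $\opLie\to\opEnd^{\ch}_M$, which is how both you and the paper read it.
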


Given $\alpha \in  \MC(\g^{\ch}_M)$,
we can twist the graded Lie algebra $\g^{\ch}_M$ 
as follows.

\begin{dfn*}
The following has a structure of dg Lie algebra,
which we call the \emph{chiral dg Lie algebra}.
\[
 \g^{\ch,\alpha}_M := (\g^{\ch}_{M},[ \ ], \partial_\alpha),
 \quad
 \partial_\alpha := \partial +[\alpha,-] = [\alpha,-].
\]
\end{dfn*}

The standard deformation theory says

\begin{prop*}
Given $\alpha \in  \MC(\g^{\ch}_M)$,
we have 
\[ 
 \{\text{chiral algebra structures on $M$ deforming $\alpha$}\} 
 \ \stackrel{\ \ 1:1 \ \ }{\longleftarrow \joinrel \longrightarrow} \ 
 \MC(\g^{\ch,\alpha}_M).
\]
\end{prop*}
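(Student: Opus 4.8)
The plan is to reduce this statement to the general
deformation-theoretic Fact behind Maurer--Cartan elements of
dg Lie algebras, exactly as in the classical passage from
Fact \ref{fct:P-alg} to the twisted description
$\MC(\g^\mu_{\opP,V})$. First I would recall the standard
\emph{twisting principle}: for any dg Lie algebra
$(\g,[\ ],\partial)$ and any $\alpha\in\MC(\g)$, the operator
$\partial_\alpha:=\partial+[\alpha,-]$ again squares to zero
and satisfies the Leibniz rule, so
$(\g,[\ ],\partial_\alpha)$ is again a dg Lie algebra; this is
precisely the content of the preceding
\textbf{Definition} for $\g^{\ch,\alpha}_M$, which I may assume.
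The key algebraic identity is that, writing
$\beta=\alpha+\varepsilon$, the Maurer--Cartan equation
$\partial\beta+\tfrac12[\beta,\beta]=0$ expands, using
$\alpha\in\MC(\g^{\ch}_M)$ and bilinearity of $[\ ]$, into
$\partial_\alpha\varepsilon+\tfrac12[\varepsilon,\varepsilon]=0$,
which is exactly the Maurer--Cartan equation of the twisted
algebra $\g^{\ch,\alpha}_M$.

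With this identity in hand, the proof is a bijection of
solution sets. By the earlier \textbf{Proposition},
chiral algebra structures on $M$ are in bijection with
$\MC(\g^{\ch}_M)$, the degree $n=-1$ Maurer--Cartan elements;
under this identification, a chiral algebra structure
\emph{deforming} $\alpha$ is, by definition, an element
$\beta\in\MC(\g^{\ch}_M)$ whose leading term agrees with
$\alpha$, i.e.\ $\beta=\alpha+\varepsilon$ with $\varepsilon$ in
the appropriate positive-weight part. Then I would verify that
the assignment $\beta\mapsto\varepsilon=\beta-\alpha$ sends
$\MC(\g^{\ch}_M)$-elements deforming $\alpha$ to
$\MC(\g^{\ch,\alpha}_M)$, using the expansion above, and that
the inverse $\varepsilon\mapsto\alpha+\varepsilon$ lands back in
$\MC(\g^{\ch}_M)$. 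Since both maps are mutually inverse affine
translations by $\alpha$, they are manifestly bijective,
establishing the claimed $1:1$ correspondence. Because
$\partial=0$ on $\g^{\ch}_M$, the twisted differential is simply
$\partial_\alpha=[\alpha,-]$, so the bookkeeping is even lighter
than in the general operadic case.

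The main obstacle I anticipate is bookkeeping rather than
conceptual: one must be careful about the grading and weight
conventions so that the phrase ``deforming $\alpha$'' picks out
exactly the elements of the correct degree and weight, and so
that the subtraction $\beta-\alpha$ is degree- and
weight-homogeneous. Concretely, $\alpha$ has cohomological degree
$-1$ and weight $1$, and the deformation parameter should be
organized so that $\varepsilon$ ranges over elements of the same
cohomological degree $-1$; I would check that the Maurer--Cartan
equation of $\g^{\ch,\alpha}_M$ is posed in the right degree,
which follows because $[\ ]$ has the standard degree $0$ on the
shifted grading used throughout. A secondary point is to confirm
that ``twisting by $\alpha$'' is compatible with the operadic
origin of $\g^{\ch}_M$ as $\Hom(\opLie^{c!},\opEnd^{\ch}_M)$; but
this compatibility is automatic, since twisting is a purely
formal operation on the underlying dg Lie algebra and does not
interact with how that dg Lie algebra was constructed. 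Granting
the twisting principle and the earlier Proposition, no genuinely
hard step remains, and the result is a direct translation of the
classical deformation statement to the chiral setting.
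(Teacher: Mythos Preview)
Your proposal is correct and follows exactly the route the paper takes: the paper does not give a standalone proof of this proposition but simply prefaces it with ``The standard deformation theory says'' and later, in \S\ref{subsec:2:deform}, recalls precisely the twisting principle and the Fact $\Def_{\varphi}(R)\simeq\MC(\g^{\varphi}\otimes\frkm)$ from \cite[Proposition 12.2.6]{LV} that you unpack. Your affine translation $\beta\mapsto\beta-\alpha$ together with the expansion of the Maurer--Cartan equation is the standard argument behind that Fact, so your write-up is a faithful (and more explicit) version of what the paper invokes.
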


We have a similar argument for coisson algebras.
One can construct the \emph{coisson dg Lie algebra} 
whose underlying $\frkS$-module is given by 
\[
 \g^{c}_{M} := \Hom(\opLie^{c!},\opEnd^{c}_M). 
\]
A coisson algebra structure corresponds to a weight $1$ 
element of $\Tw(\g^{c}_{M})$ bijectively.

\subsubsection{Deformation problem revisited}

By the arguments in \S\ref{subsubsec:intro:coisson} of coisson algebras,
we have morphisms of operads
\[
 \opEnd^{\ch}_M \longtwoheadrightarrow
 \gr\opEnd^{\ch}_M \longhookrightarrow 
 \opEnd^{c}_M.
\]

The first theorem in this note is 

\begin{thm*}
The above induces a morphism of dg Lie algebras
\[
 \g^{\ch}_M \longto \g^{c}_M
\]
and hence we have a map 
\[
 \psi: \MC(\g^{\ch}_M) \longto \MC(\g^{c}_M).
\]
\end{thm*}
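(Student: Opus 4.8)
The plan is to build the dg Lie algebra morphism directly from the operad morphisms, exploiting the functoriality of the convolution construction in its second argument. Recall that for any $\frkS$-module $N$ equipped with an operad structure, the space $\Hom(\opLie^{c!},N) = \oplus_n \Hom_{\frkS_n}(\opLie^{c!}(n),N(n))$ carries a convolution dg Lie bracket built entirely from the infinitesimal decomposition of the cooperad $\opLie^{c!}$ and the partial composition maps of the operad $N$. Hence a \emph{morphism of operads} $f\colon \opEnd^{\ch}_M \to \opEnd^c_M$ ought to induce, by post-composition, a morphism of the associated convolution Lie algebras. So the first step is to factor the situation: write $f = \iota \circ p$, where $p\colon \opEnd^{\ch}_M \longtwoheadrightarrow \gr_W\opEnd^{\ch}_M$ is the projection to the associated graded of the Cousin filtration $W^\bullet$, and $\iota\colon \gr_W\opEnd^{\ch}_M \longhookrightarrow \opEnd^c_M$ is the inclusion of operads supplied in \S\ref{subsubsec:intro:coisson}. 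Both are morphisms of operads by construction, so $f$ is too.

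Next I would prove the general lemma that post-composition by an operad morphism $f\colon N \to N'$ yields a morphism of convolution Lie algebras $f_*\colon \Hom(\opLie^{c!},N)\to\Hom(\opLie^{c!},N')$, sending $\phi \mapsto f\circ\phi$. The point is that the convolution bracket of $\phi,\psi$ is assembled as
\[
 [\phi,\psi] = \sum \pm\, \gamma_N\bigl(\phi\ccone\psi\bigr)\circ\Delta_{\opLie^{c!}},
\]
that is, apply the cooperad decomposition of $\opLie^{c!}$, feed the two factors through $\phi$ and $\psi$, and recombine via the partial composition $\gamma_N$ of the operad. Because $f$ commutes with the composition maps ($f\circ\gamma_N = \gamma_{N'}\circ(f\circ f)$) and acts $\frkS$-equivariantly, it passes through this formula verbatim, giving $f_*[\phi,\psi] = [f_*\phi,f_*\psi]$. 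Since the differential on $\g^{\ch}_M$ is $\partial = 0$ and likewise the purely bracket-theoretic part of the coisson differential is governed by the same composition data, compatibility with differentials is automatic (or reduces to the same commutation of $f$ with $\gamma$). Applying this lemma to our $f$ produces the desired morphism $\g^{\ch}_M\to\g^c_M$.

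Finally, a morphism of dg Lie algebras carries Maurer--Cartan elements to Maurer--Cartan elements: if $d\alpha + [\alpha,\alpha]=0$ and $\Phi$ is a dg Lie morphism, then $d(\Phi\alpha)+[\Phi\alpha,\Phi\alpha] = \Phi(d\alpha+[\alpha,\alpha]) = 0$. This yields the map $\psi\colon\MC(\g^{\ch}_M)\to\MC(\g^c_M)$ as the restriction of $f_*$ to the Maurer--Cartan loci, and one checks it respects the degree and weight gradings so that $\MC$ (the weight-one, degree $-1$ part of $\Tw$) is preserved.

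The main obstacle I anticipate is verifying that $p$ and $\iota$ truly are operad morphisms in the \emph{dg} sense compatible with the convolution differentials, not merely maps of underlying $\frkS$-modules. In particular, $\opEnd^{\ch}_M$ carries a genuine chiral composition while $\gr_W$ and $\opEnd^c_M$ carry the compound composition, and one must confirm that the Cousin filtration $W^\bullet$ is compatible with partial compositions (so that $p$ is a morphism of \emph{operads}, degreewise in the weight grading) and that $\iota$ respects not just composition but the $\opLie$-factors $\otimes_{t\in S}\opLie(|\pi_S^{-1}(t)|)$ appearing in $\opEnd^c_M$. This filtration-compatibility is the technical heart, and I would expect to prove it by tracing through the local description of chiral composition against the Cousin complex of $\omega_X$; once established, the transfer to Lie algebras and then to $\MC$ is formal.
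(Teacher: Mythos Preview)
Your proposal is correct and follows essentially the same route as the paper: the paper simply invokes functoriality of the convolution construction $\Hom_{\frkS}(\opLie^{c!},-)$ in the operad variable to pass from the operad morphism $\opEnd^{\ch}_M \to \opEnd^{c}_M$ to a morphism of (graded) Lie algebras, and then uses the universal fact that dg Lie algebra morphisms preserve Maurer--Cartan solutions. Your anticipated ``main obstacle'' is already handled earlier in the paper (\S\ref{sss:cl}): the filtration $W_\bullet$ is shown to be compatible with chiral composition, so $p$ is an operad morphism, and the embedding $\iota$ is constructed as a faithful pseudo-tensor functor; since both $\g^{\ch}_M$ and $\g^c_M$ have $\partial=0$, the dg compatibility is vacuous.
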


See Remark \ref{rmk:dq-cdq} for the difference between our situation 
and the usual deformation quantization of associative algebras.

\begin{dfn*}
We call $\mu \in \MC(\g^{c}_M)$ 
a \emph{chiral deformation quantization} of  $\mu^{c} \in \MC(\g^{c}_M)$
if $\psi(\mu)=\mu^{c}$.
\end{dfn*}

Obviously, if the map $\psi$ is surjective,
then a chiral deformation quantization exists.
Our main theorem is 

\begin{thm*}
$\psi$ is always \emph{injective}.
\end{thm*}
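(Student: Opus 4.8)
The plan is to reduce the injectivity of $\psi$ to a statement in a single arity, and then to deduce that statement from the faithfulness of the symbol map on chiral binary operations.

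First I would exploit the weight grading. Since $\opLie^{c!}$ is the Koszul dual cooperad of a binary quadratic operad, its weight-$d$ component is concentrated in arity $d+1$; consequently the weight grading on $\g^{\ch}_M = \oplus_n \Hom_{\frkS_n}(\opLie^{c!}(n),\opEnd^{\ch}_M(n))$ agrees with the arity grading up to the shift by one. By definition $\MC(\g^{\ch}_M)$ consists of the weight $d=1$ elements, so every $\alpha \in \MC(\g^{\ch}_M)$ is concentrated in arity $2$; it is nothing but a chiral bracket $\mu \in \opEnd^{\ch}_M(2)$ subject to the constraint $[\mu,\mu]=0$. The same holds on the coisson side. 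Because the dg Lie morphism $\g^{\ch}_M \to \g^{c}_M$ is assembled arity by arity from the operad comparison, $\psi(\alpha)$ is again concentrated in arity $2$ and is obtained by applying the single $\frkS_2$-equivariant map
\[
 \opEnd^{\ch}_M(2) \longto \opEnd^{c}_M(2)
\]
to the arity-$2$ component of $\alpha$. Hence $\psi$ is injective as soon as this arity-$2$ comparison map is, and I would isolate the latter as the real content.

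Second I would establish injectivity of the arity-$2$ map using the filtration $W^\bullet$. In arity $2$ it is the finite, hence separated, chain $\opEnd^{\ch}_M(2)=W^0 \supset W^{-1} \supset W^{-2} \supset W^{-3}=0$, and the inclusion $\gr_W \opEnd^{\ch}_M(2) \longhookrightarrow \opEnd^{c}_M(2)$ is injective by the construction in \S\ref{subsubsec:intro:coisson}. So it suffices to see that passing to the associated graded loses no information on a single binary operation. The key point is that the Cousin complex of $\omega_X$, which produces $W^\bullet$, splits the filtration canonically in arity $2$: a chiral operation $j^{(2)}_* j^{(2)*} M^{\boxtimes 2} \to \Delta^{(2)}_* M$ is determined by, and decomposes into, the collection of its polar coefficients along the diagonal (the products $A_{(n)}B$ with $n\ge0$ together with the commutative product), and each such coefficient is recorded in a distinct graded piece. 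Thus the projection $\opEnd^{\ch}_M(2) \to \gr_W \opEnd^{\ch}_M(2)$ is an isomorphism of $\frkS_2$-modules, and its composite with the injection into $\opEnd^{c}_M(2)$ is injective.

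The main obstacle is exactly the faithfulness claim of the second paragraph: one must verify that the Cousin filtration on $\opEnd^{\ch}_M(2)$ is genuinely separated and canonically split, and that its graded pieces embed into the separate components $\opEnd^{c}_M(2)_S$ indexed by $S\in Q([2])$ without collapse. Concretely this is a computation of $\Delta^{(2)!}$ applied to $j^{(2)}_* j^{(2)*} M^{\boxtimes 2}$ together with a check that the comparison annihilates no polar coefficient; intuitively it records the fact that the limit coisson structure retains the full vertex-Lie datum $Y_{-}(A,z)=\sum_{n\ge0}A_{(n)}z^{-n-1}$ as well as the commutative product, i.e.\ precisely the data of the chiral bracket. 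Granting this, injectivity of $\psi$ follows formally, with no appeal to the Jacobi identity $[\mu,\mu]=0$ — which is what distinguishes the present situation from the usual deformation quantization, as noted in Remark \ref{rmk:dq-cdq}.
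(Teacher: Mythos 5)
Your proposal follows essentially the same route as the paper: reduce to the arity-$2$ (weight-$1$) component, then show that the composite $\opEnd^{\ch}_M(2)\twoheadrightarrow\gr_W\opEnd^{\ch}_M(2)\hookrightarrow\opEnd^{c}_M(2)$ is injective because the finite special filtration on binary chiral operations is canonically split. The single step you defer as the ``main obstacle'' --- the splitting of that filtration --- is exactly what the paper supplies, by writing out the graded pieces of $W_\bullet$ on $j_*j^*\omega_{X^2}$ and the Cousin complex $0\to\omega_X^{\boxtimes 2}\to j_*j^*\omega_{X^2}\to\Delta_*\omega_X\to 0$ and observing that it splits, so that $C^{\ch,2}(M)\simeq\gr_W C^{\ch,2}(M)$; otherwise your argument matches the paper's.
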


Thus, a vertex Poisson algebras arising from the limit construction,
like $V_{\infty}(\g)$ and $W_{\infty}(\g,e_{\reg})$,  
has a unique chiral deformation quantization.

\subsection{Organization of this note}

In \S\ref{sect:CA} we give a recollection of chiral algebras 
following \cite{BD}.
We start \S\ref{subsec:d-mod} with the $*$-pseudo-tensor structure 
and $!$-tensor structure 
on the category of $\shD_X$-modules.
Using these structures,  
we introduce coisson algebras in \S\ref{subsec:cois} and 
chiral algebras in \S\ref{subsec:CA}.
In \S\ref{subsec:cl} and \S\ref{subsec:limit} 
we explain the relation 
between chiral and coisson algebras.

\S\ref{sect:conv} is the recollection of operad theory 
and the deformation theory of algebras over an operad 
following \cite{LV}.
In \S\ref{subsec:conv-dgla} 
we introduce the operadic convolution dg Lie algebra.
In \S\ref{subsec:mc-alg} 
we recall quadratic operads, Koszul dual and Koszul operads.
In \S\ref{subsec:mc-alg} 
the dg Lie algebra controlling operadic algebra structure 
is introduced.
\S\ref{subsec:2:deform} gives a brief recollection of deformation 
theory using dg Lie algebras.

\S\ref{sect:chdgla} is the main part of this note,
and we will use all the notions prepared so far.
\S\ref{subsec:calc} introduces and studies the dg Lie algebras 
controlling the deformations of chiral and coisson algebras.
In \S\ref{subsec:deform} we cast our deformation quantization problem to 
the dg Lie algebras and prove the main theorems.

\subsection*{General Notations}

For a category $\catM$, $A \in \catM$ means that 
$A$ is an object of $\catM$.

A tensor category means a monoidal category in the sense of \cite{M}.

$\catSet$ denotes the category of arbitrary sets and maps.
For a set $I$, $|I|$ denotes its cardinality.

In the main text we will work over a fixed field $\bbK$ of 
characteristic $0$.
$\otimes$ denotes the tensor product $\otimes_{\bbK}$ 
of $\bbK$-vector spaces unless otherwise stated.

\section{Recollection of chiral algebras and coisson algebras}
\label{sect:CA}

This section gives a recollection of chiral algebras and coisson algebras
following \cite{BD}.

Beilinson and Drinfeld started \cite{BD} with a large account 
of the general theory of pseudo-tensor category.
It is equivalent to the notion of colored operad, 
and roughly speaking,
it is a category whose sets of morphisms have composition rules.
A pseudo-tensor structure with one object is nothing but an operad.

We avoid copying their argument and specialize it to 
the case of $\shD$-modules.
The main purpose of this section is to introduce two operads,
which we name \emph{chiral operad} and \emph{coisson operad}.
These are restricted versions of chiral and coisson pseudo-tensor structures 
originally introduced in \cite[\S1.4, \S2.2, \S3.1]{BD},
and encode the operadic structure of vertex algebras and 
vertex Poisson algebras.

\subsection{$\shD$-module category and (pseudo)-tensor structures}
\label{subsec:d-mod}

We follow \cite[\S2.2]{BD}.

Let $X$ be a smooth scheme over a fixed field $\bbK$ of 
characteristic $0$.
$\shO_X$, $\shD_X$ and $\Theta_X$ denote the structure sheaf,
the sheaf of differential operators,
and the sheaf of vector fields on $X$ respectively.
$\catM_{\shO}(X)$ denotes the category of quasi-coherent $\shO_X$-modules.
We will also use the simplified symbols $\shO:=\shO_X$ and  $\shD := \shD_X$.
Denote by $\catM(X)$ the category of right $\shD$-modules  on $X$
(more precisely,  sheaves of right $\shD_X$-modules 
 which are quasi-coherent as $\shO_X$-modules).
Similarly denote by $\catM^\ell(X)$ the category of left $\shD$-modules.

\subsubsection{The $!$-tensor structure}

For $L_1,L_2 \in \catM^\ell(X)$, 
$L_1 \otimes_{\shO_X} L_2$ is naturally a left $\shD$-module.
So $\catM^\ell(X)$ is a tensor category 
with a unit object $\shO_X$.

For a right $\shD$-module $M$ and a left $\shD$-module $L$,
the sheaf $M \otimes_{\shO_X} L$ is naturally a right $\shD$-module by
\[
 (m \otimes l) \tau := m\tau \otimes l - m \otimes \tau l
\]
for $\tau \in \Theta_X \subset \shD_X$.
We will denote this right $\shD$-module by $M \otimes L$.

The canonical sheaf $\omega_X := \Omega_X^{\dim X}$ 
has a canonical right $\shD$-module structure
\[
 \nu \tau = -\Lie_{\tau}(\nu)
\]
for $\nu \in \omega_X$,
where $\Lie_\tau$ denotes the Lie derivative.
Then we have the standard equivalence 
\[
 \catM^\ell(X) \to \catM(X),\quad 
 L \longmapsto L^r := \omega_X \otimes L.
\]
The inverse is give by 
\[
 M \longmapsto M^\ell := M \otimes \omega_X^{-1}.
\]

Pulling back the tensor structure on $\catM^\ell(X)$ 
by this equivalence,
we have a tensor structure on $\catM(X)$.
Namely, 
\[
 M \otimes^! N := M^\ell \otimes_{\shO_X} N.
\]
The canonical sheaf $\omega_X$ is a unit object for $\otimes^!$.

\subsubsection{The $*$-pseudo-tensor structure}

In order to introduce the $*$-pseudo-tensor structure,
we need to recall some functors of $\shD$-modules.
Let $\catDM(X)$ denote the derived category of 
right $\shD$-modules on $X$,
and $\catDM^\ell(X)$ the derived category of left $\shD$-modules.
We have an equivalence 
\[
 \catDM^\ell(X) \longto \catDM(X), \quad
 L \longmapsto \omega_X \otimes L [\dim X].
\]
Thus $\catDM(X)$ has the original $t$-structure $\catM(X)$ 
and the other $t$-structure $\catM^\ell(X)$,
differing shifts by $\dim X$.

For a morphism $f:X \to Y$ of smooth schemes,
we have the standard derived functors 
\[
 f_*: D \catM(X) \longto D \catM(Y),\quad
 f^!: D \catM(Y) \longto D \catM(X).
\]
If $f$ is a closed embedding, then $f_*$ is exact with respect to 
the $t$-structure $\catM(X)$, and its right adjoint $f^!$ is left exact.
They define the equivalence (Kashiwara's lemma)
\[
 f_*: \catM(X) \longleftrightarrow \catM(Y)_X  :f^!.
\]
Here $\catM(Y)_X \subset \catM(Y)$ is the full subcategory of 
$\shD_Y$-modules vanishing on $Y \setminus X$.


For a finite collection $\{X_i\}_{i \in I}$ of smooth schemes,
denote by $\boxtimes$ the exterior tensor product.
Namely we have 
\[
 \prod_{i \in I} \catM(X) \longto \catM(\prod_{i \in I}X_i),\quad
 (M_i) \longmapsto \boxtimes_{i \in I} M_i.
\]
The functors $f_*$ and $f^!$ are compatible with $\boxtimes$.

Now we can discuss pseudo-tensor structures on $\catM(X)$.
Let $\catS$ be the category of finite non-empty sets 
and surjective maps.
For a morphism $\pi:J \twoheadrightarrow I$ and $i \in I$, 
we set $J_i := \pi^{-1}(i) \subset J$.
We will also denote $n := \{1,2 \ldots, n\} \in \catS$ 
if confusion may not occur.
For $I \in \catS$, we denote the diagonal embedding by 
\[
 \Delta^{(I)}: X \longhookrightarrow X^I.
\]

For $I \in \catS$ and $L_i,M \in \catM(X)$ with $i\in I$, set 
\[
 P^*_I(\{L_i\}_{i \in I},M) := 
 \Hom_{\catM(X^I)}\left(\boxtimes_{i \in I} L_i, \Delta^{(I)}_* M\right).
\]
Then for a surjection $\pi: J \twoheadrightarrow I$,
one can define a multi-linear map 
\begin{equation}\label{eq:pt:comp}
 \gamma^*_{\pi}:
 P^*_I\left(\{L_i\}_{i \in I},M\right) \otimes
 \bigotimes_{i \in I} P^*_{J_i}\left(\{K_j\}_{j \in J_i},L_i\right) 
 \longto 
 P^*_J\left(\{K_j\}_{j \in J},M\right) 
\end{equation}
by composing morphisms of $\shD$-modules.
Here $\otimes := \otimes_{\bbK}$ 
is the tensor product of $\bbK$-vector spaces.
Explicitly, $\gamma^*_{\pi}(\varphi,(\psi_i)_{i \in I})$ is given by
\begin{align*}
 \boxtimes_{j \in J} K_j = 
 \boxtimes_{i \in I}\boxtimes_{j \in J_i} K_j
 \xrightarrow{\ \boxtimes_{i \in I} \psi_i \ }
 \boxtimes_{i \in I} \Delta^{(J_i)}_* L_i = 
 \Delta^{(\pi)}_*\left(\boxtimes_{i \in I}L_i\right)
 \xrightarrow{\ \Delta^{(\pi)}(\varphi) \ }
 \Delta^{(\pi)}_*\Delta^{(I)}_* M
 = \Delta^{(J)}_* M,
\end{align*}
where we used
\[
 \Delta^{(\pi)} := \prod_{i \in I}\Delta^{(J_i)}: X^I \longhookrightarrow X^J
\]
and the natural identification 
$\Delta^{(\pi)} \Delta^{(I)} = \Delta^{(J)}$.
Let us call $\gamma^*_\pi$ the \emph{composition}.

The compositions are associative in the following sense.
Let us use a simplified symbol 
\[ 
 \varphi(\psi_i) := \gamma^*_{\pi}\left(\varphi,(\psi_i)_{i \in I}\right)
\]
for a composition of morphisms.
If $\rho: H \twoheadrightarrow J$ is another surjective map, 
$\{F_h\}_{h \in H}$ an $H$-family, 
and if $\chi_j \in P^*_{H_j}(\{F_h\}_{h \in H_j}, K_j)$,
then in $P^*_{H}(\{F_h\}_{h \in H},M)$ one has 
\begin{equation}\label{eq:pt:assoc}
 \varphi(\psi_i(\chi_j)) = (\varphi(\psi_i))(\chi_j) 
\end{equation} 

We also have 
$\eta_M := \id_M \in \Hom_{\catM(X)}(M,M) = P^*_{\{1\}}(\{M\},M)$.
Then for any $\varphi \in P^*_I(\{L_i\}_{i \in I}, M)$ 
we have
\begin{equation}\label{eq:pt:unit}
\eta_M(\varphi) = \varphi(\eta_{L_i}) = \varphi.
\end{equation}

Although we don't need the following definition in a full generality,
let us recall 

\begin{dfn*}[{\cite[\S1.1]{BD}}]
A pseudo-tensor category $(\catM,P,\gamma,\eta)$ 
consists of the following data.
\begin{itemize}
\item
A class $\catM$  of objects,
\item 
A set 
\[
 P^{\catM}_I\left(\{L_i\}_{i \in I},M\right) \equiv 
 P_I\left(\{L_i\},M\right)
\]
for any $I \in \catS$, any $I$-family of objects 
$\{L_i\}_{i \in I}$ in $\catM$, and any $M \in \catM$.
It is called the set of \emph{$I$-operations}.

\item
A map 
\[
 \gamma_{\pi}:
 P_I\left(\{L_i\}_{i \in I},M\right) \times 
 \prod_{i \in I} P_{J_i}\left(\{K_j\}_{j \in J_i},M\right) 
 \longto 
 P_J\left(\{K_j\}_{j \in J},M\right) 
\]
for any morphism $\pi:J \to I$ in $\catS$,
any families $\{L_i\}_{i \in I}$ and $\{K_j\}_{j \in J}$,
and any $M \in \catM$.
It is called the \emph{composition map}.

\item
An element $\eta_M \in P_{{\{1\}}}(\{M\},M)$ for any $M \in \catM$ 
called the \emph{unit operation}. 
\end{itemize}
These should satisfy the following conditions.
\begin{enumerate}
\item 
The composition map is associative in the sense \eqref{eq:pt:assoc}.
\item
The equality \eqref{eq:pt:unit} holds 
for any $\varphi \in P_I(\{L_i\},M)$.
\end{enumerate}
\end{dfn*}

A pseudo-tensor category $\catM$ is naturally a category in the usual sense 
by setting the set of morphisms $\Hom_{\catM}(M,N)$ to be $P_1(\{M\},N)$ 
and the identity functor to be $\eta_M$.

\begin{rmk}\label{rmk:PTC:A}
One can modify the definition of pseudo-tensor category
by requiring $P_I$ to be an object of a fixed tensor category $\catA$.
Such a structure will be called a pseudo-tensor $\catA$-category.
If $\catA$ is the tensor category of $R$-modules, 
where $R$ is a commutative ring,
then it is called a pseudo-tensor $R$-category.
Hereafter we will always work on a pseudo-tensor $\bbK$-category,
and suppress the phrase `$\bbK$-'.
\end{rmk}

Using these notions, we can state 

\begin{dfn}\label{dfn:*PRS}
For any smooth scheme $X$ over $\bbK$,
we have a pseudo-tensor category 
\[
 \catM(X)^* := (\catM(X), P^*,\gamma^*,\eta)
\]
and call it the \emph{$*$-pseudo-tensor structure} on $\catM(X)$.
\end{dfn}

Note also that a tensor category $(\catM,\otimes)$  
has a pseudo-tensor structure by 
$P_I(\{L_i\},M) := \Hom_{\catM}(\otimes_{i \in I} L_i,M)$.
In particular, 
we have a pseudo-tensor category 
\[
 \catM^!(X) := (\catM(X),\otimes^!).
\] 


A pseudo-tensor category having only one object 
is nothing but a reduced operad.
In other words,
for a pseudo-tensor category $\catM$, 
one can define an operad for each $M \in \catM$ by setting
\[
 \opEnd^{\catM}_M := \otimes_{n\ge1} P^{\catM}_n(M),
 \quad
 P^{\catM}_n(M):=P^{\catM}_{[n]}(\{M,\ldots,M\},M).
\]
The $\frkS_n$-action on each factor is given by the compositions 
with respect to bijections from $[n]=\{1,\ldots,n\}$ to itself.
The compositions and $\eta_L$ yield an operad structure on 
the $\frkS$-module $\opEnd^{\catM}_L$.

\begin{dfn*}
For an operad $\opB$ and a pseudo-tensor category $\catM$,
a \emph{$\opB$-algebra in $\catM$} is an object $L \in \catM$ 
with an operad morphism $\opB \to \opEnd^{\catM}_L$.
Denote by $\opB(\catM)$ the category of $\opB$-algebras in $\catM$.
\end{dfn*}

For later use,
we define the following symbols for an operad $\opB$.
\[
 \opB^*(X) := \opB(\catM(X)^*), \quad 
 \opB^!(X) := \opB(\catM(X)^!).
\]

Let us close this subsection by recalling

\begin{dfn*}[{\cite[\S1.1]{BD}}]
A \emph{pseudo-tensor functor} $\catM \to \catN$ of 
two pseudo-tensor categories $\catM$ and $\catN$ is a functor 
\[ 
 \tau: \catM \longto \catN
\] 
of categories together with a map 
\[
 \tau_I: 
  P^{\catM}_I\left(\{L_i\}_{i \in I},M\right)
  \longto
  P^{\catN}_I\left(\{\tau(L_i)\}_{i \in I},\tau(M)\right) 
\]
for any $I \in \catS$
so that $\tau_I$ are compatible with compositions and 
$\tau_1(\id_M) = \id_{\tau(M)}$ for any $M \in \catM$.
\end{dfn*}

Thus a pseudo-tensor functor between operads 
(seen as pseudo-tensor categories with one object)
is nothing but a morphism of operads.

\subsection{Coisson algebras}
\label{subsec:cois}

We follow \cite[\S1.4]{BD} to introduce coisson algebras.
Our definition is due to {\cite[\S1.4.28.\ Lemma]{BD}} 
and  different from the original definition.

Recall that $\catS$ denotes the category of finite non-empty sets 
and surjections.
An operad $\opB$ 
can be enlarged to a functor over $\catS$ by
\[
 I \longmapsto \opB_I:= 
 \left(\oplus_{f} \opB(n)\right)_{\frkS_{n}}.
\]
Here we set $n:=|I|$ and $f$ runs over the bijections 
$f:I \to \{1,\ldots,n\}$.
The last term means the module of $\frkS_{n}$-coinvariants.
We use this construction for $\opB=\opLie$ below.

Let $X$ be a smooth scheme over $\bbK$ as before.
We introduce a new  pseudo-tensor structure 
\begin{equation}\label{eq:MXc}
 \catM(X)^c=(\catM(X), P^c,\gamma^c,\eta)
\end{equation}
on $\catM(X)$ by setting 
\[
 P^c_I\left(\{L_i\}_{i \in I},M\right) := 
  \oplus_{S \in Q(I)} P^c_I\left(\{L_i\},M\right)_S,
 \quad
 P^c_I\left(\{L_i\}_{i \in I},M\right)_S := 
  P^*_S\left(\{\otimes^!_{i \in I_s}L_i\}_{s \in I},M\right) \otimes  
  (\otimes_{s \in S}\opLie_{I_s}).
\]
for $I \in \catS$.
Here $Q(I)$ is the set of all the surjections $I \twoheadrightarrow S$.
The composition $\gamma^c$ is given by the tensor product of $\gamma^*$ 
and $\opLie$ operations.

As before, one has an operad for each $M \in \catM(X)$ induced by $\catM(X)^c$.

\begin{dfn}
For $M \in \catM(X)$,
the reduced operad $\opEnd^c_M$ arising from  $\catM(X)^c$
is called \emph{the coisson operad} on $M$.
\end{dfn}

Thus the underlying $\frkS$-module of $\opEnd^c_M$ is given by 
$\opEnd^c_M(n) = P^c_n(M)$.

\begin{dfn}[{\cite[\S1.4.28.\ Lemma]{BD}}]
\label{dfn:coisson}
A coisson algebra (without unit) on $X$ is 
a $\opLie$-algebra in $\catM(X)^c$.
\end{dfn}

In other words, a coisson algebra structure on $M \in \catM(X)$ 
is an operad morphism $\opLie \to \opEnd^c_M$.

\begin{rmk*}
Let us say a few words on the original definition.
Our category $\catM(X)$ with the $*$-pseudo tensor structure $P^*$
and the tensor structure $\otimes^!$ 
has an compound pseudo-tensor structure \cite[\S1.3]{BD}.
A coisson algebra is originally defined for 
any abelian augmented compound tensor category $\catM^{*!}$.

A compound tensor category $\catM^{*!}=(\catM,P^*,\otimes^!)$ 
consists of a pseudo-tensor structure $\catM^*=(\catM,P^*)$ on 
a category $\catM$
and a (usual) tensor structure $\catM^{\circ !}=(\catM^\circ,\otimes^!)$
on the dual category $\catM^\circ$ 
satisfying some duality relation.
We also skip the explanation of abelian property and augmentation. 
Denote by $\catM^!$ the tensor category dual to $\catM^{\circ !}$. 

For an operad $\opB$, 
we call $\opB$-algebras in $\catM^*$ simply \emph{$\opB^*$-algebras},
and those in $\catM^!$ simply \emph{$\opB^!$-algebras}.
(Here we consider the tensor category $\catM^!$ 
 as a pseudo-tensor category.)

Let us denote by $\opComu$ 
the operad of commutative associative algebra with unit.
Finally, a \emph{coisson algebra} 
in $\catM^{*!}$ 
is a pair $(A,\{ \, \})$ of $\opComu^!$-algebra $A$ 
which is also a $\opLie^*$-algebra,
such that the Lie bracket $\{ \, \} \in P^*_2(\{A,A\},A)$ 
satisfies the Leibniz rule with respect to commutative${}^!$ product.
\end{rmk*}

\subsection{Chiral algebras}
\label{subsec:CA}

We follow \cite[\S3.1]{BD} to introduce chiral algebras.

Recall that $\catS$ denotes the category of finite non-empty sets 
and surjections.
For $I \in \catS$ and a smooth scheme $X$, 
denote by $U^{(I)}$ the complement of diagonal divisors. 
Namely
\[
 U^{(I)} := \{(x_i) \in X^I \mid 
  x_i \neq x_j \text{ for any } i \neq j\}.
\]
The open embedding is denoted by 
\[
 j^{(I)}: U^{(I)} \longhookrightarrow X^I.
\]

For $L_i, M \in \catM(X)$ ($i \in I$), we set 
\[
 P^{\ch}_I(\{L_i\},M) := 
 \Hom_{\catM(X^I)}\bigl(
  j^{(I)}_* j^{(I) *}(\boxtimes_{i\in I} L_i),\Delta^{(I)}_* M \bigr).
\]
Elements of this set are called chiral operations.

\begin{rmk*}
Here is the explanation of the functor $f^*$ used in the definition.
For a morphism $f:X \to Y$ of smooth schemes,
the standard derived functor $f^!:\catDM(Y) \to \catDM(X)$ 
is compatible with the standard pull-back functors 
$\bfR f^!, \bfL f^*:\catDM_{\shO}(Y) \to \catDM_{\shO}(X)$ for $\shO$-modules.
Thus $f^!$ is right exact with respect to the  $t$-structure $\catM^\ell(X)$. 
We denote the corresponding functor as 
$f^*: \catM^\ell(Y) \longto \catM^\ell(X)$.
Now by the standard equivalence, 
we read this functor as $f^*:\catM(Y) \longto \catM(X)$.
\end{rmk*}

\begin{fct*}[{\cite[\S3.1.2]{BD}}]
There is a pseudo-tensor structure $(\catM(X),P^{\ch},\gamma,\eta)$
on $\catM(X)$. 
\end{fct*}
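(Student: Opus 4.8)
The plan is to make $\catM(X)$ into a pseudo-tensor category by constructing the chiral composition $\gamma^{\ch}_\pi$ and unit and then checking the two axioms, in exact parallel with the $*$-pseudo-tensor structure of Definition \ref{dfn:*PRS}. The unit operation is immediate: since $U^{(\{1\})}=X$, so that $j^{(\{1\})}$ and $\Delta^{(\{1\})}$ are identities, one has $P^{\ch}_{\{1\}}(\{M\},M)=\Hom_{\catM(X)}(M,M)$, and I would set $\eta_M := \id_M$. The real content is the definition of $\gamma^{\ch}_\pi$ for a surjection $\pi: J \twoheadrightarrow I$, and for this I would introduce two stratifications of $X^J$: the locus $U^{(\pi)} := \{(x_j) \in X^J \mid x_j \neq x_{j'} \text{ whenever } \pi(j) \neq \pi(j')\}$ where points in distinct $\pi$-fibres are separated, with open embedding $j^{(\pi)}: U^{(\pi)} \longhookrightarrow X^J$, and the locus $\prod_i U^{(J_i)} \subset X^J$ where points within each single fibre are separated. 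The two elementary geometric observations I would isolate are: (i) $U^{(J)} = U^{(\pi)} \cap \prod_i U^{(J_i)}$, so that allowing poles along all diagonals factors as allowing poles along the across-fibre diagonals and then along the within-fibre diagonals; and (ii) the restriction of $\Delta^{(\pi)}$ to $U^{(I)}$ gives a closed embedding $U^{(I)} \longhookrightarrow U^{(\pi)}$, fitting into a Cartesian square with the open embeddings, since $(\Delta^{(\pi)})^{-1}(U^{(\pi)}) = U^{(I)}$.

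Granting (i) and (ii), I would build $\gamma^{\ch}_\pi(\varphi,(\psi_i))$ as a composite of morphisms of right $\shD_{X^J}$-modules. First, using (i) together with the compatibility of $j_*, j^*$ with $\boxtimes$, rewrite the source as
\[
 j^{(J)}_* j^{(J) *}(\boxtimes_{j\in J} K_j)
 = j^{(\pi)}_* j^{(\pi) *}\bigl(\boxtimes_{i\in I}(j^{(J_i)}_* j^{(J_i) *}\boxtimes_{j\in J_i}K_j)\bigr),
\]
where the inner identity is the ``poles commute'' statement for the two complementary families of diagonal divisors. Second, apply $j^{(\pi)}_* j^{(\pi) *}(\boxtimes_{i\in I} \psi_i)$ to reach $j^{(\pi)}_* j^{(\pi) *}\Delta^{(\pi)}_*(\boxtimes_{i\in I} L_i)$. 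Third, invoke base change along the Cartesian square of (ii), which yields $j^{(\pi)}_* j^{(\pi) *}\Delta^{(\pi)}_* = \Delta^{(\pi)}_* j^{(I)}_* j^{(I) *}$, rewriting the target as $\Delta^{(\pi)}_*\bigl(j^{(I)}_* j^{(I) *}\boxtimes_{i\in I} L_i\bigr)$; to this one applies $\Delta^{(\pi)}_* \varphi$ and uses $\Delta^{(\pi)}\Delta^{(I)} = \Delta^{(J)}$ to land in $\Delta^{(J)}_* M$. Thus $\gamma^{\ch}_\pi$ is defined purely by composing $\shD$-module maps, just as $\gamma^*$ was.

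For the axioms, the unit conditions \eqref{eq:pt:unit} follow at once from $\eta_M = \id_M$ and the observation that both rewriting steps degenerate to identities when $I=\{1\}$ or when all fibres are singletons. Associativity \eqref{eq:pt:assoc} for a tower $H \twoheadrightarrow J \twoheadrightarrow I$ I would reduce to the associativity of ordinary composition of $\shD$-module morphisms, once the two geometric inputs are shown to be functorial under composition of surjections: concretely, the ``poles commute'' factorizations for $H \to I$ refine compatibly through $H \to J$ and $J \to I$, and the Cartesian squares of (ii) paste, so the corresponding base-change isomorphisms compose coherently. This bookkeeping is routine after recording the analogues of (i) for the intermediate stratifications.

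The main obstacle I anticipate lies entirely in the functorial identities for the localization functor $M \mapsto j_* j^{*} M$: namely verifying the base-change commutation $j^{(\pi)}_* j^{(\pi) *}\Delta^{(\pi)}_* = \Delta^{(\pi)}_* j^{(I)}_* j^{(I) *}$ of (ii), and the ``poles commute'' identity of (i) for localization along two complementary families of diagonals. These are standard in the theory but must be checked in the present category of right $\shD_X$-modules that are quasi-coherent over $\shO_X$; once they are in hand, the composition and its associativity are forced, and the construction mirrors the $*$-pseudo-tensor structure verbatim.
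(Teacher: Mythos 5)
Your construction is correct and is precisely the one in \cite[\S 3.1.2]{BD}, which the paper cites for this Fact without reproducing any proof: the composition is defined via the factorization $U^{(J)} = U^{(\pi)} \cap \prod_{i} U^{(J_i)}$ together with base change along the Cartesian square $(\Delta^{(\pi)})^{-1}(U^{(\pi)}) = U^{(I)}$, exactly as you describe, and associativity reduces to the coherence of these canonical isomorphisms. So there is nothing to compare against in the paper itself; your two geometric inputs (i) and (ii) are the right ones, and the remaining verifications are the routine bookkeeping you indicate.
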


The resulting 
pseudo-tensor category is denoted by $\catM(X)^{\ch}$
and called the \emph{chiral pseudo-tensor structure}.
For an operad $\opB$, the category of $\opB$-algebras 
in $\catM(X)^{\ch}$ will be denoted by $\opB^{\ch}(X)$.

\begin{dfn}
The operad structure induced by $\catM(X)^{\ch}$ on 
\[
 \opEnd_M^{\ch}:= \oplus_{n \ge 1} \opEnd_M^{\ch}(n), \quad
 \opEnd_M^{\ch}(n) \equiv P^{\ch}_n(M) := P^{\ch}_{[n]}(\{M,\ldots,M\},M)
\]
with $[n]=\{1,\ldots,n\}$ 
is called the \emph{chiral operad}.
\end{dfn}

Roughly speaking, a chiral algebra is a $\opLie$-algebra 
in $\catM(X)^{\ch}$,
but we require it to have a unit.
In order to state the precise definition, we need a few more notions.

For $M \in \catM(X)$, the \emph{unit operation} 
$\ve_M \in P^{\ch}_{[2]}(\{\omega_X,M\},M)$ 
is defined to be the composition 
\[
 j_* j^* \omega_X \otimes M 
 \longto (j_* j^* \omega_X \boxtimes M)/\omega_X \boxtimes M
 \longsimto \Delta_* M,
\]
where the last arrow comes from the canonical isomorphism 
$\omega_X \otimes^! M \simto M$.


Consider 
a $\opLie$-algebra $A$ in $\catM(X)^{\ch}$.
It has an operation 
$\mu_A \in P^{\ch}_{[2]}(\{A,A\},A)$
coming from the bracket in $\opLie$.
A \emph{unit} in  $A$ is a morphism of $\shD$-modules 
$1_A: \omega_X \longto A$
such that 
$\mu_A(1_A,\id_A) \in P^{\ch}_2(\{\omega_X,A\},A)$ 
coincides with the unit operation $\ve_A$.

\begin{dfn}
A \emph{chiral algebra} on $X$ is a $\opLie^{\ch}$-algebra in $\catM(X)^{\ch}$
with unit.
Denote by $\CA(X)$ 
the category of chiral algebras and morphisms preserving units.
\end{dfn}

\subsection{Classical limit}
\label{subsec:cl}

The next goal is to explain the limit construction of coisson algebras from 
chiral algebras.
For a preparation, we introduce the classical limit pseudo-tensor structure 
following \cite[\S\S 3.1--3.2]{BD}.

Hereafter we assume $\dim X =1$.

\subsubsection{The Lie operad in terms of chiral operations}

Consider the chiral operad $\opEnd^{\ch}_{\omega_X}$ 
on $\omega_X$.
Set
\begin{equation}\label{eq:lambda_I}
 \lambda_I := (\bbK[1])^{\otimes I} [-|I|].
\end{equation}
The group $\Aut(I)$ acts on it by the sign character
(see Remark \ref{rmk:sign} for our treatment of tensor product of complexes).
If $|I|=2$, then the residue morphism 
\[
 \Res: j^{(I)}_* j^{(I) *}\omega_{X^I} \longto 
  \Delta^{(I)}_* \omega_X
\]
yields a map 
\[
 r_I: \lambda_I \longto P^{\ch}_I(\omega_X).
\]

\begin{fct}[{\cite[\S3.1.5.\ Theorem]{BD}}]
\label{fct:Cohen}
For $\dim X = 1$, 
there is a unique isomorphism of operads
\[
 \kappa: \opLie \longsimto \opEnd^{\ch}_{\omega_X}
\]
which coincides with $r_I$ for $|I|=2$.
\end{fct}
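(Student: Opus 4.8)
The plan is to prove this by an explicit local computation of the chiral operad $\opEnd^{\ch}_{\omega_X}$, identifying it arity by arity with the Lie operad, and then extracting both the isomorphism and its uniqueness from the fact that $\opLie$ is generated in arity $2$. Throughout I would work locally, taking $X$ to be the affine line with coordinate $t$ and $\omega_X = \shO_X\,dt$, so that $\omega_X^{\boxtimes n} = \omega_{X^n}$ and $j^{(n)}_* j^{(n)*}\omega_{X^n}$ is the right $\shD_{X^n}$-module of top forms $f(t_1,\dots,t_n)\,dt_1\cdots dt_n$ with poles allowed only along the diagonals $t_i = t_j$. First I would rewrite
\[
 \opEnd^{\ch}_{\omega_X}(n) = \Hom_{\catM(X^n)}\bigl(j^{(n)}_* j^{(n)*}\omega_{X^n},\, \Delta^{(n)}_*\omega_X\bigr)
\]
using the adjunction for the closed embedding $\Delta^{(n)}$ (Kashiwara's lemma); the point is that a $\shD$-linear map into $\Delta^{(n)}_*\omega_X$ is completely determined by the iterated residues it takes as the points collapse onto the diagonal, and that the right $\shD_{X^n}$-module structure is exactly what forces such a map to be built from residues.

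Next I would show that the resulting space is spanned by iterated residues indexed by rooted binary trees on $n$ leaves, concretely by partial-fraction monomials of the form $\tfrac{1}{(t_{i_1}-t_{j_1})\cdots(t_{i_{n-1}}-t_{j_{n-1}})}$ attached to such trees, and that the only relations among them are the Arnold relations, the basic one being
\[
 \frac{1}{(t_1-t_2)(t_2-t_3)} + \frac{1}{(t_2-t_3)(t_3-t_1)} + \frac{1}{(t_3-t_1)(t_1-t_2)} = 0.
\]
Comparing with the standard presentation of $\opLie(n)$ by bracketings modulo antisymmetry and Jacobi (of dimension $(n-1)!$) identifies $\opEnd^{\ch}_{\omega_X}(n) \cong \opLie(n)$ as right $\frkS_n$-modules; for $n=2$ the space is one-dimensional with the sign action, so that $r_2$ is an isomorphism onto it.

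I would then verify that the operadic composition of chiral operations, obtained by composing residues along the nested diagonals $\Delta^{(\pi)}$, corresponds to the nesting of Lie brackets, so that the Arnold relation above matches the Jacobi identity and the arity-wise identification just described upgrades to an isomorphism of operads. Uniqueness is then formal: since $\opLie$ is generated by $\mu_{\Lie} \in \opLie(2)$ subject only to antisymmetry and Jacobi, any operad morphism out of $\opLie$ is determined by the image of $\mu_{\Lie}$, and the normalization requiring $\kappa$ to coincide with $r_I$ for $|I|=2$ pins down this image.

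The main obstacle is the computation behind the identification of $\frkS_n$-modules: showing that the space of chiral $n$-operations has exactly dimension $(n-1)!$ with the correct $\frkS_n$-action, and that the Arnold relations generate all relations among the residue operations. This forces one to track the right $\shD_{X^n}$-module relations, which are what cut the naive space of residue pairings down to the Lie operad, together with the shifts and Koszul signs packaged in $\lambda_I = (\bbK[1])^{\otimes I}[-|I|]$. In essence it is the $\shD$-module incarnation of the Orlik--Solomon / configuration-space computation that produces $\opLie$ precisely in dimension one.
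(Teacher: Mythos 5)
The paper does not prove this statement: it is quoted verbatim as a Fact from \cite[\S3.1.5 Theorem]{BD}, and the text immediately after it says explicitly that the proof will not be repeated, recalling only the ingredient it needs later, namely the special filtration $W_\bullet$ on $j^{(I)}_*j^{(I)*}\omega_{X^I}$ coming from the Cousin complex and the description \eqref{eq:spfilt:grd} of its graded pieces in terms of $\opLie^*_{I/T}$. So there is nothing in the paper to compare your argument against line by line; what I can say is that your outline is essentially the Beilinson--Drinfeld proof seen from the ``residue/partial fraction'' side, and it is consistent with the fragments the paper does record (your Arnold-relation computation is exactly what produces the $\opLie^*_{I/T}$ factors in \eqref{eq:spfilt:grd}, and the paper's own remark after the Fact points to the configuration-space cohomology interpretation you invoke at the end).

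As a proof, however, your sketch leaves the genuinely hard steps asserted rather than established. First, the claim that a $\shD_{X^n}$-linear map into $\Delta^{(n)}_*\omega_X$ ``is completely determined by iterated residues'' is not a formal consequence of Kashiwara's lemma; the clean way to get it is the filtration argument: one shows $\gr^W_0=0$, that $\Hom(\Delta^{(I/T)}_*\omega_{X^T},\Delta^{(I)}_*\omega_X)=0$ for $|T|\ge 2$ (distinct simple supports), and hence that every morphism to $\Delta^{(I)}_*\omega_X$ kills $W_{-2}$ and factors through $\gr^W_{-1}\simeq\Delta^{(I)}_*\omega_X\otimes\opLie_I^*$, which is where the identification with $\opLie_I$ and the dimension $(n-1)!$ actually come from. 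Second, ``the only relations among the residue monomials are the Arnold relations'' is precisely the Orlik--Solomon/configuration-space computation; it is the core of Cohen's theorem and cannot be taken for granted --- it is equivalent to the exactness statement you would otherwise have to prove about the Cousin complex. Third, you work on $\mathbb{A}^1$ with a chosen coordinate, while the statement is for an arbitrary smooth curve; you need to check that the residue maps and the resulting isomorphism are coordinate-independent (this is why the statement is phrased via the canonically defined $r_I$ on $\lambda_I$, with the sign representation absorbed into $\lambda_I=(\bbK[1])^{\otimes I}[-|I|]$) before gluing. Your treatment of uniqueness --- $\opLie$ is generated in arity $2$, so a morphism extending $r_I$ on $|I|=2$ is unique once it exists --- is correct and is the easy half; existence requires verifying the Jacobi identity for the binary residue operation (again the $3$-term Arnold relation) and surjectivity in each arity (again the dimension count). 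So the strategy is right and matches the source, but the proposal is an outline of the proof rather than a proof.
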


\begin{rmk*}
In \cite[\S3.1.11]{BD} this fact is called Cohen's theorem,
which originally states that 
the Gerstenhaber operad is isomorphic to the homology of 
the little cube operad of dimension $2$.
We refer \cite[\S3.1.11]{BD} for th reason of this naming 
and the relation of 
$P^{\ch}$ to the homology of configuration spaces.
\end{rmk*}

We will not repeat the proof of this fact,
but recall some notions for later use.
For a $\shD_{X^I}$-module $M$,
a  \emph{special filtration} on $M$ 
is a finite increasing filtration $W_{\bullet}$ 
such that every graded component 
$\gr^{W}_m M := W_m M/W_{m-1}M$ is a 
finite sum of copies of $\Delta^{(I/T)}_* \omega_{X^T}$
for some $T \in Q(I,-m)$ 
with
\[
 Q(I,m) := \{S \in Q(I) \mid |S|=m\}.
\]
We set $W_0 M := M$, so that a special filtration $W_{\bullet}$ 
looks like
\[
 W_{-|I|}M \subset W_{-|I|+1}M \subset \cdots 
 \subset W_{-1}M \subset W_0M = M.
\]

An important step in the proof of the above fact is to 
show that $j^{(I)}_* j^{(I) *} \omega_{X^I}$ 
has a special filtration $W_{\bullet}$ \cite[\S3.1.7.\ Lemma]{BD}.
It comes from the Cousin complex of $\omega_{X^I}$, but 
we skip the argument and cite 
an explicit description of its graded component from 
\cite[\S3.1.10]{BD} for later use. 
For $T \in Q(I)$, set the vector space 
\[
 \opLie_{I/T} := \otimes_{t \in T} \, \opLie_{I_t}.
\]
Then there is a canonical isomorphism 
\begin{equation}\label{eq:spfilt:grd}
 \gr^{W}_{-m} j^{(I)}_*j^{(I)*} \omega_X^{\boxtimes I}
 \longsimto 
 \bigoplus_{T \in Q(I,m)} 
 \Delta^{(I/T)}_* \omega_X^{\otimes T} \otimes \opLie^*_{I/T}
\end{equation}
Here $\opLie^*_{I/T}$ is the linear dual of $\opLie_{I/T}$,
and $\Delta^{(I/T)}$ denotes the embedding associated to 
the surjection $I \twoheadrightarrow T$:
\[
 \Delta^{(I/T)}:= \prod_{t \in T} \Delta^{(I_t)}: 
 X^T \longhookrightarrow X^I.
\]

\subsubsection{The classical limit pseudo-tensor structure}
\label{sss:cl}

Now we will introduce a new pseudo-tensor structure $\catM(X)^{\cl}$ 
and explain the following sequence of pseudo-tensor functors.
\[
 \catM(X)^{\ch} \longto \catM(X)^{\cl} \longto \catM(X)^{c}.
\]

For the introduction of $\catM(X)^{\cl}$, 
let us recall the special filtration $W_{\bullet}$ on 
$j^{(I)}_*j^{(I)*} \omega_X^{\boxtimes I}$.
Since 
\begin{equation}\label{eq:jj:isom}
 j^{(I)}_*j^{(I)*}\bigl(\boxtimes_{i\in I} L_i\bigr) \simeq 
 \bigl(j^{(I)}_*j^{(I)*} \omega_X^{\boxtimes I}\bigr) 
 \otimes \bigl(\boxtimes_{i \in I} L_i^\ell\bigr),
\end{equation}
the special filtration induces another finite filtration on 
$j^{(I)}_*j^{(I)*}(\boxtimes L_i)$,
which we denote by the same symbol $W_{\bullet}$.
It yields a canonical filtration on the space of chiral operations as 
\begin{equation}\label{eq:ch:filt}
 P^{\ch}_I(\{L_i\},M)^n := 
 \Hom_{\catM(X^I)}\bigl(
  j^{(I)}_*j^{(I)*} (\boxtimes L_i) \big/ 
  W_{n-1-|I|} \, j^{(I)}_*j^{(I)*} (\boxtimes L_i),
  \Delta^{(I)}_* M
 \bigr)
\end{equation}
Note that it is an decreasing filtration.
We denote by $\gr^n := P^{\ch,n}/P^{\ch,n+1}$ 
its $n$-th graded component.

This filtration is compatible with the composition of chiral operations,
hence we have 

\begin{dfn*}
The graded components 
\[
  P^{\cl}_I := \gr^{\bullet} P^{\ch}_I
\]
give a pseudo-tensor structure on $\catM(X)$ 
called the \emph{classical limit} of the chiral structure.
We denote it by $\catM(X)^{\cl}$.
\end{dfn*}

Next we explain that there is an embedding
$\catM(X)^{\cl} \hookrightarrow \catM(X)^c$.
The graded component \eqref{eq:spfilt:grd} of $W_{\bullet}$ 
and the isomorphism \eqref{eq:jj:isom} 
induce the canonical surjection
\[
 \phi_I: \bigoplus_{T \in Q(I,m)}\Delta^{(I/T)}_*
  \boxtimes_{t \in T}\left(\left(\otimes^!_{i \in I_t} L_i\right)
  \otimes_{\bbK} \opLie^*_{I_t}
 \right)
 \longtwoheadrightarrow
 \gr^{W}_{-m}   j^{(I)}_*j^{(I)*} (\boxtimes L_i). 
\]
It is induced by the canonical map 
$\Delta_*^{(I/T)}\left(
 \boxtimes_{t \in T}\left(\otimes_{i \in I_t}L_i^\ell\right)\right)
 \to \boxtimes_{i \in I}L_i^\ell$,
where $\otimes$ over $L_i^\ell$ is the tensor structure on $\catM^\ell(X)$.
So $\phi_I$ is actually an isomorphism if $L_i$ are $\shO_X$-flat.
$\phi_I$ induces the following canonical embedding
\begin{equation}\label{eq:Pcl-Pc}
 \gr^n P^{\ch}_{I}(\{L_i\},M) \longhookrightarrow 
  \bigoplus_{T \in Q(I,|I|-n)} 
  P^*_T\left(\{\otimes^!_{i \in I_t} L_i\}_{t\in T}, M\right)
  \otimes \opLie_{I/T}.
\end{equation}

Let us write down this embedding explicitly.
For simplicity, we denote 
\[
 L := j^{(I)}_*j^{(I)*} (\boxtimes L_i), \quad
 N := \boxtimes_{t \in T}\left(
 \left(\otimes^!_{i \in I_t} L_i\right) \otimes_{\bbK} \opLie^*_{I_t}\right).
\]
Given $\varphi \in P^{\ch}_{I}(\{L_i\},M)^n$,
we have a composition of morphisms 
\[
 \oplus_{T \in Q(I,m-1)}\Delta^{(I/T)}_* N 
 \longtwoheadrightarrow
 \gr^{W}_{-(m-1)} L = W_{-(m-1)}L / W_{-m}L
 \longhookrightarrow L/W_{-m}L 
 \xrightarrow{\ \varphi \ } \Delta^{(I)}_* M.
\]
It is an element of the direct sum over $T$ of the spaces 
\begin{align*}
&\Hom_{\catM(X^I)}\bigl(\Delta^{(I/T)}_* N, \Delta^{(I)}_* M\bigr)
 \simeq
 \Hom_{\catM(X^I)}\bigl(\Delta^{(I/T)}_* N, 
 \Delta^{(I/T)}_* \Delta^{(T)}_* M\bigr) 
 \simeq
 \Hom_{\catM(X^T)}\bigl(N, \Delta^{(T)}_* M\bigr) \\
&\simeq
 \Hom_{\catM(X^T)}\bigl(
  \boxtimes_{t \in T}\left(\otimes^!_{i \in I_t} L_i\right), 
  \Delta^{(T)}_* M\bigr) \otimes_{\bbK} 
 \left(\otimes_{t\in T} \opLie_{I_t}\right)
 = P^*_T(N,M) \otimes \opLie_{I/T}
\end{align*}
Here the first isomorphism comes from the equality 
$\Delta^{(I)} = \Delta^{(I/T)} \Delta^{(T)}$,
and the second one is by the exactness of the functor $\Delta^{(T)}_*$.
This construction is independent of the choice of $\varphi$,
and we have the desired embedding.

Now recalling the pseudo-tensor structure $\catM(X)^c$ 
in \eqref{eq:MXc},
we see that the embedding \eqref{eq:Pcl-Pc} gives 
a canonical fully faithful embedding of pseudo-tensor categories
\[
 \catM(X)^{\cl} \longhookrightarrow \catM(X)^c
\]
which extends the identify functor on $\catM(X)$.

\subsection{Coisson algebra as the classical limit of chiral algebra}
\label{subsec:limit}

Now we argue that a coisson algebra may be obtained 
as a classical limit of chiral algebras.
As explained in the introduction,
it is an analogy of 
the limit construction of vertex Poisson algebras from 
vertex algebras.

\subsubsection{Chiral structure and the compound tensor structure}

Now we explain the following pseudo-tensor functors
\begin{equation}\label{eq:M!-Mch-M*}
 \catM(X)^! \otimes \opLie \xrightarrow{\ \alpha\ }
 \catM(X)^{\ch} \xrightarrow{\ \beta \ }
 \catM(X)^*.
\end{equation}

As for $\beta$, first note that the natural morphism 
$\boxtimes L_i \to j^{(I)}_* j^{(I) *}(\boxtimes L_i)$
yields a map 
\[
 \beta_I: P^{\ch}_I(\{L_i\},M) \longto P^*_I(\{L_i\},M),
\]
which is compatible with the composition.
So it further yields a pseudo-tensor functor
\[
 \beta: \catM(X)^{\ch} \longto \catM(X)^*
\]
extending the identity functor on $\catM(X)$.
This functor respects the augmentations.

As for $\alpha$, 
recall that $\catM(X)^!$ is a tensor category 
and $\opLie$ is a pseudo-tensor category associated to the operad structure. 
The tensor product $\catM(X)^! \otimes \opLie$ is a pseudo-tensor category 
whose space of operations is given by 
\[
 P_I(\{L_i\},M) = 
 \Hom_{\catM(X)}(\otimes^!_{i \in I} L_i, M) \otimes_{\bbK} \opLie_I.
\]
The pseudo-tensor functor $\alpha$ is given by 
\[
 \alpha_I: 
  \Hom_{\catM(X)}(\otimes^!_{i \in I} L_i, M) \otimes_{\bbK} \opLie_I
  \longto 
  P^{\ch}_I(\{L_i\},M)
\]
which sends $\varphi \otimes \mu$ to the chiral operation
\[
 j^{(I)}_*j^{(I) *}(\boxtimes L_i)
 \simeq 
 (\boxtimes L_i^\ell) \otimes j^{(I)}_*j^{(I)*} \omega_X^{\boxtimes I}
 \xrightarrow{\ \id \otimes \kappa(\mu) \ }
 (\boxtimes L_i^\ell) \otimes \Delta^{(I)}_* \omega_X
 \simeq 
  \Delta^{(I)}_*(\otimes^!L_i)
 \xrightarrow{\ \Delta^{(I)}_*(\varphi) \ }
 \Delta^{(I)}_* M,
\]
where
$\kappa$ 
is the isomorphism in Fact \ref{fct:Cohen}.
$\alpha_I$ is injective for any $I$, so that 
$\alpha$ is a faithful pseudo-tensor functor.

\subsubsection{Commutative chiral algebras}

For an operad $\opB$,
consider $\opB$-algebras on the pseudo-tensor categories 
in the sequence \eqref{eq:M!-Mch-M*}.
Then we have the sequence of functors 
\[
 \opB\left(\catM(X)^! \otimes \opLie\right) \xrightarrow{\ \alpha^{\opB} \ }
 \opB^{\ch}(X) \xrightarrow{\ \beta^{\opB} \ } \opB^*(X).
\]
Applying this argument to $\opB=\opLie$,
we have
\[
 \opCom^!(X)    \xrightarrow{\ \alpha^{\opLie} \ }
 \opLie^{\ch}(X) \xrightarrow{\ \beta^{ \opLie} \ }
 \opLie^*(X).
\]
Here $\opCom$ is the operad of commutative algebras (without unit), 
and we used the following fact.

\begin{fct*}[{\cite[1.1.10.\ Lemma]{BD}}]
For any pseudo-tensor category $\catM$,
$\opLie\left(\catM \otimes \opLie \right) \simeq \opCom(\catM)$.
\end{fct*}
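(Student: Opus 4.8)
The plan is to reduce the claimed equivalence to the quadratic presentations of $\opLie$ and $\opCom$. Both are quadratic operads, generated in arity $2$ subject to a single relation in arity $3$, so an algebra structure over either is determined by a binary operation together with one ternary relation, and I never have to look at arities $\ge 4$. Unwinding the pseudo-tensor structure on $\catM \otimes \opLie$, whose operations are $P^{\catM}_I(\{L_i\},M) \otimes_{\bbK} \opLie_I$ with composition the tensor product of $\gamma^{\catM}$ and the $\opLie$-composition, an $\opLie$-algebra structure on $L$ is exactly an operad morphism $\opLie \to \opEnd^{\catM}_L \otimes_H \opLie$, where $\otimes_H$ denotes the arity-wise (Hadamard) tensor product of operads and I have used the identification $\opLie_{[n]} \cong \opLie(n)$ as $\frkS_n$-modules. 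The statement then becomes the natural bijection
\[
 \Hom_{\mathrm{Op}}\bigl(\opLie, \opP \otimes_H \opLie\bigr) \cong \Hom_{\mathrm{Op}}\bigl(\opCom, \opP\bigr), \qquad \opP := \opEnd^{\catM}_L,
\]
together with its compatibility with algebra morphisms.

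First I would match the generators. The generator of $\opLie$ spans $\opLie(2)$, the sign representation $\sgn$ of $\frkS_2$, and an operad morphism sends it to an $\frkS_2$-equivariant element of $\opEnd^{\catM}_L(2) \otimes \opLie(2) = \opEnd^{\catM}_L(2) \otimes \sgn$. Because $\sgn \otimes \sgn$ is the trivial representation, such equivariant images are in bijection with the $\frkS_2$-invariant, i.e.\ symmetric, elements $m \in \opEnd^{\catM}_L(2)$, which are precisely the candidate commutative products generating $\opCom$-algebra structures. Writing the image as $m \otimes \ell$ with $\ell$ a generator of $\sgn$ records this correspondence; the sign twist supplied by $\opLie(2)$ is exactly what converts the antisymmetry of a Lie bracket into the symmetry of a commutative product.

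Next I would match the ternary relations, which is the computational core. The Jacobiator of the bracket $b = m \otimes \ell$ is the cyclic symmetrization $b \ccone b + (b \ccone b)^{(123)} + (b \ccone b)^{(132)}$, and in the Hadamard product it equals $(m \ccone m) \otimes u + (m \ccone m)^{(123)} \otimes v + (m \ccone m)^{(132)} \otimes w$, where $u = \ell \ccone \ell$ and $v,w$ are its $\frkS_3$-translates in $\opLie(3)$. The Jacobi identity of $\opLie$ itself reads $u + v + w = 0$, so $\{u,v\}$ is a basis of the two-dimensional space $\opLie(3)$ and I may substitute $w = -u - v$. Collecting the coefficients of $u$ and of $v$ separately, the vanishing of the Jacobiator becomes the pair of conditions expressing that $m \ccone m$ is invariant under the cyclic subgroup of $\frkS_3$; combined with the symmetry of $m$ already obtained, this cyclic invariance is exactly the associativity $m(m(x_1,x_2),x_3) = m(x_1,m(x_2,x_3))$. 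Hence the Jacobi relation for $b$ holds if and only if $m$ is a commutative associative product, and reading the same computation backwards produces the inverse construction, sending a commutative associative $m$ to the Lie bracket $m \otimes \ell$.

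Finally I would promote this object-level bijection to an equivalence of categories. A morphism in $\opLie(\catM \otimes \opLie)$ is an arrow of $\catM$ intertwining the brackets $m \otimes \ell$, and since $\ell$ is a fixed scalar generator this is the same datum as an arrow intertwining the products $m$, i.e.\ a morphism in $\opCom(\catM)$; naturality in $L$ is then immediate from the construction. I expect the main obstacle to be the ternary step: one must keep the $\frkS_3$-action, the sign twists coming from $\opLie(2) = \sgn$, and the operadic composition in the Hadamard product mutually consistent, so that the basis reduction via the Jacobi relation delivers precisely associativity and neither a weaker nor a stronger constraint.
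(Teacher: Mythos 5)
The paper does not prove this statement: it is imported verbatim as a Fact from \cite[1.1.10.~Lemma]{BD}, so there is no in-paper argument to compare yours against. Judged on its own, your proof is correct and is essentially the standard generators-and-relations verification. The reduction to $\Hom_{\mathrm{Op}}(\opLie,\opP\otimes_H\opLie)\cong\Hom_{\mathrm{Op}}(\opCom,\opP)$ with $\opP=\opEnd^{\catM}_L$ is legitimate, since $\opEnd^{\catM\otimes\opLie}_L(n)=P^{\catM}_n(L)\otimes\opLie_{[n]}$ and $\opLie_{[n]}\cong\opLie(n)$ canonically; the arity-$2$ step ($\sgn\otimes\sgn\cong\mathrm{triv}$ turning antisymmetric brackets into symmetric products) and the arity-$3$ step (substituting $w=-u-v$ via the Jacobi identity in $\opLie(3)$ and reading off cyclic invariance of $m\circ_1 m$, which together with symmetry of $m$ is exactly associativity) are both sound, and quadraticity of $\opLie$ and $\opCom$ justifies ignoring arities $\ge 4$. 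Two small points of hygiene: your $b\ccone b$ should really be the partial composite $b\circ_1 b$ (the paper's $\ccone$ is the full infinitesimal composite, which also contains the $\circ_2$ insertion), though the cyclic-sum form of the Jacobiator you use is equivalent to the usual one by antisymmetry; and the linear-independence claim you invoke at the end, that $x\otimes u+y\otimes v=0$ in $\opP(3)\otimes\opLie(3)$ forces $x=y=0$, deserves a half-sentence since it is what converts the single tensor identity into the two separate conditions. Neither affects correctness.
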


Recall that a chiral algebra (or a $\opLie^{\ch}$-algebra in general) 
is equipped with the binary chiral operation 
$\mu_A \in P^{\ch}_2(\{A,A\},A)$.

\begin{dfn*}
A $\opLie^{\ch}$-algebra $A$ is said to be commutative 
if $[\ ]_A =0$,
where 
\[
 [\ ]_A := \beta^{\opLie}(\mu_A) \in P^*_2(A) \:= P^*_{[2]}(\{A,A\},A).
\]
Denote by $\opLie^{\ch}_{com}(X) \subset \opLie^{\ch}(X)$ 
the full subcategory of commutative $\opLie^{\ch}$-algebras,
and by $\CA(X)_{com} \subset \CA(X)$ 
the full subcategory of commutative chiral algebras.
We call $[\ ]_A$ the \emph{$*$-bracket} of $A$.
\end{dfn*}

Finally we remark the equivalences
\[
 \alpha^{\opLie}: \opCom^!(X) \longsimto \opLie^{\ch}_{com}(X), \quad  
 \opComu^!(X) \longsimto \CA(X)_{com},
\]
where $\opComu$ is the operad of commutative algebra with unit.
One can obtain these from the the sequence \eqref{eq:M!-Mch-M*}
and the observation that  the composition 
$\beta_I \alpha_I: P^!_I \otimes \opLie_I \to P^*_I$ 
vanishes for $|I| \ge 2$ and the sequence 
$0 \to  P^!_2 \otimes \opLie_2 \to P^{\ch}_2 \to P^*_2$ is exact.

\subsubsection{The deformation problem}

A coisson algebra 
can be considered as classical limits of chiral algebras on $X$,
as indicated in \cite[\S3.3.11]{BD}.

Let $A_t$ be a flat family  of chiral algebras over $\bbK[t]$, 
namely it is a chiral algebra defined over $\bbK[t]$ 
which is flat as a $\bbK[t]$-module.
Assume that $A_{0} := A_t/t A_t$ is a commutative chiral algebra,
namely, the $*$-bracket $[ \, ]_{t}$ of $A_t$ 
vanishes modulo $t$.

\begin{fct*}
Under the assumption, $A_0$ has a structure of coisson algebra. 
\end{fct*}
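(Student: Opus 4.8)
The plan is to realize $A_0$ as the \emph{classical limit} of the family and to extract the coisson structure as the leading $t$-coefficient of the chiral bracket. Write $\mu_t \in P^{\ch}_{[2]}(\{A_t,A_t\},A_t)$ for the chiral bracket of $A_t$ over $\bbK[t]$, and let $[\ ]_t := \beta(\mu_t) \in P^*_{[2]}(\{A_t,A_t\},A_t)$ be its $*$-bracket, the image under the pseudo-tensor functor $\beta \colon \catM(X)^{\ch} \to \catM(X)^*$. Since $A_0 = A_t/tA_t$ is a commutative chiral algebra, $[\ ]_t$ vanishes modulo $t$, which I read as the divisibility $[\ ]_t \in t\,P^*_{[2]}(\{A_t,A_t\},A_t)$. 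First I would note that $A_0$, being a commutative chiral algebra, is a $\opComu^!$-algebra under the equivalence $\opComu^!(X) \simto \CA(X)_{com}$ recorded above; this supplies the unital commutative $!$-product that is the first datum of a coisson algebra.

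Next I would construct the coisson $*$-bracket. Because $A_t$ is flat over $\bbK[t]$, the module $P^*_{[2]}(\{A_t,A_t\},A_t) = \Hom_{\catM(X^2)}(A_t \boxtimes A_t, \Delta^{(2)}_* A_t)$ is $t$-torsion free, being $\Hom$ into a flat and hence torsion-free target. Thus the divisibility above determines a unique element
\[
 \{\ \} := t^{-1}[\ ]_t \in P^*_{[2]}(\{A_t,A_t\},A_t),
\]
whose reduction modulo $t$ is the candidate bracket $\{\ \}_0 \in P^*_{[2]}(\{A_0,A_0\},A_0)$. Antisymmetry is free: $\mu_t$ is a $\opLie$-operation, so $[\ ]_t$ and therefore $\{\ \}_0$ are antisymmetric.

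It then remains to verify the two coisson axioms for $\{\ \}_0$, the $*$-Jacobi identity and the Leibniz rule with respect to the $!$-product. For Jacobi I would use that $\beta$ is a pseudo-tensor functor, so $[\ ]_t$ is a genuine $\opLie^*$-bracket and satisfies the $*$-Jacobi identity in $P^*_{[3]}(\{A_t,A_t,A_t\},A_t)$. Each summand of that identity is a $\gamma^*$-composition of two copies of $[\ ]_t = t\{\ \}$, hence lies in $t^2 P^*_{[3]}$; dividing by $t^2$ (again using $t$-torsion freeness) and reducing modulo $t$ yields the $*$-Jacobi identity for $\{\ \}_0$. The hard part will be Leibniz, which couples the order-$t$ bracket to the order-$0$ commutative $!$-product and is the one place where the genuinely chiral, as opposed to merely $*$-, structure must be used. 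Here I would read the chiral Jacobi relation for $\mu_t$ not after applying $\beta$ but at the level of the filtration $W_\bullet$ on chiral operations: passing to the associated graded $P^{\cl}_{[3]} = \gr_W P^{\ch}_{[3]}$ and transporting along the embedding $\catM(X)^{\cl} \longhookrightarrow \catM(X)^c$ of \eqref{eq:Pcl-Pc}, the relation decomposes into its $P^*_T \otimes \opLie_{I/T}$-components, and the component indexed by the surjection $T$ that fuses exactly two of the three points delivers precisely the statement that $\{\ \}_0$ is a derivation of the $!$-product. The main obstacle is organizing this bookkeeping so that the $t$-adic order and the $W$-filtration degree line up correctly; this is exactly what the compatibility of the classical-limit pseudo-tensor functor $\catM(X)^{\ch} \to \catM(X)^{\cl} \to \catM(X)^c$ with compositions is built to control. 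Finally I would assemble the commutative $!$-product together with the antisymmetric, Jacobi, Leibniz bracket $\{\ \}_0$ into a single operad morphism $\opLie \to \opEnd^c_{A_0}$, that is, a coisson algebra structure on $A_0$ in the sense of Definition \ref{dfn:coisson}.
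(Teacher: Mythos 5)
Your proposal is correct, and for most of its steps it coincides with the paper's (very terse) argument: the paper likewise identifies $A_0$ with a $\opComu^!$-algebra, sets $\{\,\}_t := t^{-1}[\,]_t$ (flatness of $A_t$ supplying the $t$-torsion-freeness you invoke to make the division legitimate), notes that $\{\,\}_t$ is still a $\opLie^*$-bracket because the Jacobi identity is homogeneous quadratic in the bracket, and then reduces modulo $t$. The one place you genuinely diverge is the Leibniz rule. The paper disposes of it by observing that the $\opLie^*$-algebra $(A_t,\{\,\}_t)$ acts on the chiral algebra $A_t$ by adjoint: the chiral Jacobi identity for $\mu_t$, with $\beta$ applied in one slot, says that the adjoint action of the $*$-bracket is by derivations of the chiral product, and this statement survives division by $t$ inside the $t$-torsion-free space of $*$-operations; reducing modulo $t$, where the chiral product degenerates to the commutative $!$-product, gives Leibniz at once. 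You instead propose to read the chiral Jacobi identity in $P^{\cl}_{[3]} = \gr_W P^{\ch}_{[3]}$ and extract Leibniz from the component indexed by a surjection $[3]\twoheadrightarrow T$ with $|T|=2$ under the embedding $\catM(X)^{\cl}\hookrightarrow\catM(X)^{c}$. That route is viable and is closer in spirit to the classical-limit machinery the paper sets up, but it is exactly what forces you to synchronize the $t$-adic order with the $W$-degree --- the bookkeeping you flag as the main obstacle --- whereas the adjoint-action formulation decouples the two filtrations: one divides by $t$ first, entirely within $P^*$, and only afterwards passes to $A_0$. If you want to tighten the write-up, replacing the $\gr_W$ computation by this adjoint-action observation shortens the Leibniz step to a single line.
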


In fact, $\{ \, \}_t := t^{-1}[\, ]_{t}$ is a $\opLie^*$-bracket on $A_t$,
and the corresponding $\opLie^*$-algebra acts on the chiral algebra $A_t$ 
by adjoint.
Modulo $t$, we see that $A_0 \in \opComu^!(X)$ 
and $\{ \, \} := \{ \, \}_t \pmod t$ is a coisson bracket.

We call $A_t$ a \emph{chiral deformation quantization} of the coisson algebra $(A_0,\{\, \})$ (although \cite{BD} simply called it quantization). 

Using the sequence 
$\catM^{\ch} \to \catM^{\cl} \hookrightarrow \catM^c$ 
of \eqref{eq:M!-Mch-M*}
and Definition \ref{dfn:coisson} that a coisson algebra is 
equivalent to a $\opLie$-algebra in $\catM^c$,
one can restate our deformation problem as follows.
For an object $A \in \catM(X)$, 
we have a series of operad morphisms 
\begin{equation}\label{eq:operad_seq} 
 \opEnd^{\ch}_A \longto \opEnd^{\cl}_{A} \longto \opEnd^{c}_{A}
\end{equation}
between the operads 
induced by the pseudo-tensor structures 
$\catM(X)^{\ch}$, $\catM(X)^{\cl}$ and $\catM(X)^{c}$.
Then a chiral and a coisson algebra structure on $A$ are given by 
operad morphisms
\[
 \opLie \longto \opEnd^{\ch}_A, \quad
 \opLie \longto \opEnd^{c}_A
\]
respectively.
Then a deformation of a coisson algebra $A$
is to lift the operad morphism from $\opEnd^{c}_A$ to $\opEnd^{\ch}_A$.
\[
\xymatrix{
& \opLie \ar[rd] \ar@{..>}[ld]
\\ 
\opEnd^{\ch}_A \ar[r] & 
\opEnd^{\cl}_A 
\ar[r]  & \opEnd^{c}_A
}
\]

\section{Convolution dg Lie algebra in operad formalism}
\label{sect:conv}

In \S \ref{subsubsec:convol-dgla} we give a brief explanation
of the convolution dg Lie algebra 
for $\opP$-algebra structures.
In this section we give a more detailed discussion 
following \cite{LV}.

\subsection{Convolution dg Lie algebra}
\label{subsec:conv-dgla}

\subsubsection{Convolution pseudo-tensor category}

We will introduce a convolution pseudo-tensor category,
mimicking the discussion in \cite[\S6.4]{LV} for operads.

One may define the notion of \emph{co-pseudo-tensor category} in a dual way.
It is the data $(\catL,C,\Delta,\ve)$ consisting of the followings.
\begin{itemize}
\item
A class $\catL$ of objects.
\item
A $\bbK$-vector space of 
\emph{cooperations} 
$C_I^{\catL}\left(L,\{M_i\}_{i \in I}\right) 
 \equiv C_I\left(L,\{M_i\}\right)$
for any $I \in \catS$ and objects $L,M_i \in \catL$.
\item
A \emph{decomposition map} 
\[
 \Delta: 
 C_J(L,\{N_j\}_j) \longto 
 \bigoplus_{I \in Q(J)} 
 \bigoplus_{\{M_i\}_{i \in I}} C_I(L,\{M_i\}) \otimes_{\bbK} 
 \bigotimes_{\bbK, i \in I} C_{J_i}\left(M_i,\{N_j\}_{j \in J_i}\right)
\]
for any $J \in \catS$,
where the second summation is over all the $I$-families of objects $\{M_i\}$ 
in $\catL$.
\item
An element $\ve_N \in C_{\{1\}}(N,\{N\})$ for any $N \in \catL$ 
called the \emph{counit cooperation}.
\end{itemize}
These should satisfy  coassociativity and counit axiom 
like in the pseudo-tensor structure.

Similarly as in the operad case,
a co-pseudo-tensor category with one object coincides with a reduced  cooperad.
Also a co-pseudo-tensor category has a structure of the category 
in the usual sense.

Assume that we are given  a pseudo-tensor category 
$\catM=(\catM,P,\gamma,\eta)$ and co-pseudo-tensor category 
$\catL=(\catL,C,\Delta,\ve)$.
Fix $I \in \catS$ and consider the $\bbK$-module
\[
 P^{\Hom}_I \left(K,\{L_i\};\{M_i\},N\right)
 :=\Hom_{\bbK}\left(C_I\left(K,\{L_i\}\right),P_I\left(\{M_i\},N\right)\right).
\]
for $K,L_i \in \catL$ and $M_i,N \in \catM$ ($i \in I$).
We claim that there is a pseudo-tensor category structure
over the product category $\catL \otimes_{\bbK} \catM$ (in the usual sense)
encoded by the above $P^{\Hom}$.
Suppressing the symbols of objects such as $K$ and $L_i$'s, 
the composition map is written as 
\[
 \gamma^{\Hom}: 
 P^{\Hom}_I \otimes \bigotimes_{i \in I} P^{\Hom}_{J_i}
 \longto 
 P^{\Hom}_J
\]
for $I \in Q(J)$.
We define it  by sending $\varphi \otimes \left(\otimes_i \psi_i\right)$ to 
\begin{align*}
C_J \xrightarrow{\ \Delta\ \ } 
 \bigoplus_{I' \in Q(J)} C_{I'} \otimes \bigotimes_{i \in I'} C_{J_i}
 \longtwoheadrightarrow
C_I \otimes \bigotimes_{i \in I} C_{J_i} 
 \xrightarrow{\ \varphi \otimes \left(\otimes_i \psi_i\right) \ }
P_I \otimes \bigotimes_{i \in I} P_{J_i} 
 \xrightarrow{\ \gamma \ } P_J. 
\end{align*}
One can easily find the unit operation $\eta^{\Hom}$ using
$\eta$ and $\ve$.
We denote the resulting pseudo-tensor category as 
\[
 \Hom_{PT}(\catL,\catM) := 
 \left(\catL \otimes_{\bbK} \catM, P^{\Hom}, \gamma^{\Hom}, \eta^{\Hom}\right).
\]
We will call it \emph{the convolution pseudo-tensor category}.

Now let us introduce the dg version.

\begin{dfn*}
A \emph{dg pseudo-tensor category} is 
a pseudo-tensor $\catA$-category (see Remark \ref{rmk:PTC:A})
where $\catA$ is taken to be the abelian category of 
complexes ($\bbZ$-graded vector spaces 
with endomorphisms $d$ of degree $1$ with $d^2=0$) over $\bbK$.
\end{dfn*}

Thus $P_I$ in a dg pseudo-tensor category $\catM=(\catM,P,\gamma,\eta)$ 
is a complex over $\bbK$.
A \emph{dg co-pseudo-tensor category} is similarly defined.

\begin{rmk}
\label{rmk:sign}
We understand complexes are sign-graded,
namely the tensor product $V \otimes W$ is equipped with 
the symmetry
\[
 \tau: V \otimes W \longto W \otimes W, \quad
 v \otimes w  \longmapsto (-1)^{|v| \, |w|} w \otimes v.
\]
\end{rmk}

Let $\catM$ be a dg pseudo-tensor category and 
$\catL$ be a dg co-pseudo-tensor category.
Then the space $P^{\Hom}_I$ has a grading induced by 
the grading structures in $\catM$ and $\catL$.

\begin{dfn}\label{dfn:dgHom}
For a homogeneous element 
$f \in P^{\Hom}_I=\Hom_{\bbK}(C_I,P_I)$, 
define 
\[
 \partial(f) := d_P f - (-1)^{|f|}f d_C,
\]
where $|f|$ is the grading of $f$ and 
$d_P$, $d_C$ are the differentials of the complex 
$P_I$, $C_I$ respectively.
\end{dfn}

Then we immediately have $\partial^2=0$.
Thus we have 

\begin{lem*}
$(\Hom_{PT}(\catL,\catM),\partial)$ is a dg pseudo-tensor category.
\end{lem*}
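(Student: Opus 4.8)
The plan is to observe that $\Hom_{PT}(\catL,\catM)$ has already been constructed as an (ordinary) pseudo-tensor category, so the associativity \eqref{eq:pt:assoc} and unit \eqref{eq:pt:unit} axioms already hold; what remains is to check that the differential $\partial$ of Definition \ref{dfn:dgHom} is compatible with this structure, i.e.\ that the additional requirements of a pseudo-tensor $\catA$-category for $\catA$ the category of complexes are met. Concretely I would verify three things: that each $(P^{\Hom}_I,\partial)$ is a complex, that every composition map $\gamma^{\Hom}$ is a morphism of complexes, and that every unit operation $\eta^{\Hom}$ is a degree-zero cocycle. The first point is immediate: $\partial$ raises degree by $1$ by construction, and the identity $\partial^2=0$ (already noted) follows at once from $d_P^2=d_C^2=0$ together with the sign in Definition \ref{dfn:dgHom}.

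The heart of the matter is the second point. Here I would exploit that the composite $\gamma^{\Hom}(\varphi\otimes(\otimes_i\psi_i))$ is, by definition, the string
\[
 C_J \xrightarrow{\ \Delta\ } C_I\otimes\bigotimes_{i\in I}C_{J_i}
 \xrightarrow{\ \varphi\otimes(\otimes_i\psi_i)\ }
 P_I\otimes\bigotimes_{i\in I}P_{J_i} \xrightarrow{\ \gamma\ } P_J,
\]
in which the outer two arrows are \emph{fixed, degree-zero chain maps}: $\Delta$ commutes with $d_C$ because $\catL$ is a dg co-pseudo-tensor category, and $\gamma$ commutes with $d_P$ because $\catM$ is a dg pseudo-tensor category. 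Applying the Leibniz rule for the internal-Hom differential to a composite, $\partial(f\circ g)=\partial(f)\circ g+(-1)^{|f|}f\circ\partial(g)$, and using $\partial\gamma=\partial\Delta=0$, the two outer factors contribute nothing, leaving $\partial\gamma^{\Hom}(\varphi\otimes(\otimes_i\psi_i))=\gamma\circ\partial(\varphi\otimes(\otimes_i\psi_i))\circ\Delta$. Expanding $\partial(\varphi\otimes(\otimes_i\psi_i))$ by the Koszul-sign Leibniz rule for tensor products of morphisms (Remark \ref{rmk:sign}) then identifies the right-hand side with $\gamma^{\Hom}$ applied to the total differential of $\varphi\otimes(\otimes_i\psi_i)$ on $P^{\Hom}_I\otimes\bigotimes_i P^{\Hom}_{J_i}$. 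This is exactly the assertion that $\gamma^{\Hom}$ is a chain map.

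Finally, the unit operation $\eta^{\Hom}$ is assembled from the unit $\eta$ of $\catM$ and the counit $\ve$ of $\catL$, both of which are degree-zero and $d$-closed in their respective dg structures; hence $\partial\eta^{\Hom}=0$, and $\eta^{\Hom}$ is a cocycle. I expect the only real difficulty to be the bookkeeping of Koszul signs in the second step: the internal-Hom sign $(-1)^{|f|}$ must be reconciled with the signs produced when $d_P$ and $d_C$ are moved past the several tensor factors $\varphi,\psi_i$, and one must confirm that they recombine to the total-complex differential on $P^{\Hom}_I\otimes\bigotimes_i P^{\Hom}_{J_i}$ rather than differing by a sign. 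Conceptually this is the operadic computation of \cite[\S6.4]{LV} transported to the pseudo-tensor setting, so beyond careful sign accounting no new idea is required.
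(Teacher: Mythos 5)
Your proposal is correct and follows essentially the same route as the paper, which in fact offers no written proof beyond the observation that $\partial^2=0$ and treats the compatibility of $\gamma^{\Hom}$ and $\eta^{\Hom}$ with $\partial$ as immediate. Your write-up simply supplies the standard verification (chain-map property of the composition via the Leibniz rule for $\partial$ on composites, closedness of the unit) that the paper leaves implicit.
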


\subsubsection{Infinitesimal composition maps}

The next goal is to introduce a dg Lie algebra associated to 
each object of the convolution pseudo-tensor category. 
As a preliminary, we introduce infinitesimal composition maps 
for operads and cooperads, following \cite[\S6.1]{LV}.
We also introduce several basic notations of (co)operads 
for later purpose.


First let us recall the composite of $\frkS$-modules.
Given $\frkS$-modules $M=\oplus_{n\ge0} M(n)$ and $N=\oplus_{n\ge0} N(n)$,
we set
\[
 M \circ N := 
  \bigoplus_{n\ge0} M(n) \otimes_{\frkS_n} N^{\otimes n},
\]
where $\frkS_n$ acts on $M(n)$ by the given right action 
and on $N^{\otimes n}$ by permutation of factors.
Let us denote an element of 
$(M \circ N)(n) = M(n) \otimes_{\frkS_n} N^{\otimes n}$ by 
\[
 (\mu;\nu_1,\ldots,\nu_n) 
\]
with $\mu \in M(n)$ and $\nu_i \in N$.

Using the composite, 
we can consider 
the composition map $\gamma$ of an operad $\opP=(\opP,\gamma,\eta)$ 
as an $\frkS$-module morphism $\gamma: \opP \circ \opP \longto \opP$.
Similarly the decomposition map $\Delta$ of a cooperad $\opC=(\opC,\Delta,\ve)$ is a morphism $\opC \to \opC \circ \opC$.
The image $\Delta(\mu)$ is written as 
\begin{equation}\label{eq:decomp}
 \Delta(\mu) = \sum (\nu;\nu_1,\ldots,\nu_n),\quad
 \nu \in \opC(n), \  \nu_i \in \opC. 
\end{equation}

For two morphisms $f:M_1 \to M_2$ and $g:N_1 \to N_2$ of $\frkS$-modules,
we have a natural composite
\[
 f \circ g : M_1 \circ N_1 \longto M_2 \circ N_2.
\]
It can be written down as 
\[
 (f \circ g)(\mu;\nu_1,\ldots,\nu_n) :=
 \left(f(\mu);g(\nu_1),\ldots,g(\nu_n)\right).
\]

For $\frkS$-modules $L$, $M$ and $N$, we set
\[
 L \circ(M;N) := 
  \bigoplus_{n\ge0} L(n) \otimes_{\frkS_n} \Bigl(\bigoplus_{i=1}^n
  M^{\otimes (i-1)} \otimes N \otimes M^{\otimes (n-i)}\Bigr), 
\]
where $N$ sits in the $i$-th position,
and the $\frkS_n$-actions are similar as in the composite $M \circ N$. 
Clearly it is a sub $\frkS$-module of the composite $L \circ (M \oplus N)$.
We also set
\[
 M \ccone N := M \circ (I;N),
\]
where $I$ denotes the identity  $\frkS$-module given by
\begin{equation}\label{eq:op:I}
 I = \oplus_{n \ge 0}I(n), \quad
 I(0)=0,\ I(1) = \bbK,\ I(2)=I(3)=\cdots=0.
\end{equation}
Thus an element of 
$(M \circ_{(1)} N)(n) = 
 M(n) \otimes_{\frkS_n} (\oplus_{i=1}^n
  I^{\otimes (i-1)} \otimes  N \otimes I^{\otimes (n-i)})$
can be written as 
\[
 (\mu;k_1,\ldots,k_{i-1},\nu,k_{i+1},\ldots,k_n)
\]
with $\mu \in M(n)$, $\nu \in N$ and $k_j \in I \simeq \bbK$. 
Now we have

\begin{dfn}\label{dfn:inf-comp:op}
For an operad $\opP=(\opP,\gamma,\eta)$,
we define the \emph{infinitesimal composition map} $\gamma_{(1)}$ to be
\[
\gamma_{(1)}: 
 \opP \ccone \opP \longhookrightarrow \opP \circ (I \oplus \opP) 
 \xrightarrow{\ \id_{\opP} \circ (\eta+\id_{\opP})\ }
 \opP \circ \opP \xrightarrow{\ \gamma\ } \opP.
\] 
\end{dfn}

By definition the map $\gamma_{(1)}$ 
is the restriction of $\gamma$ where we only compose two operations. 

\begin{dfn}\label{dfn:inf-comp:mor}
For two morphisms $f:M_1 \to M_2$ and $g:N_1 \to N_2$ 
of $\frkS$-modules, we define the morphism 
\[
 f \ccone g: M_1 \ccone N_1 \longto M_2 \ccone N_2,\quad
 (\mu;k_1,\ldots,\nu,\ldots,k_n) \longmapsto
 (f(\mu);k_1,\ldots,g(\nu),\ldots,k_n).
\]
\end{dfn}

Next we recall the infinitesimal composition map of cooperad.
Let us introduce another composite $\circ'$
of two $\frkS$-module morphisms $f:M_1 \to M_2$ and $g:N_1 \to N_2$.
It is given by 
\[
 f \circ' g := \sum_{i}
 f \otimes \bigl(
 \id_{N_1}^{\otimes (i-1)} \otimes g \otimes \id_{N_1}^{\otimes (n-i)} 
 \bigr)
 : \  M_1 \circ N_1 \longto M_2 \circ (N_1;N_2),
\]
where $g$ sits in the $i$-th position. 
Then we have 

\begin{dfn}\label{dfn:inf-comp:coop}
For a cooperad $\opC=(\opC,\Delta,\ve)$,
the \emph{infinitesimal decomposition map} $\Delta_{(1)}$ 
of $\opC$ is defined to be
\[
 \Delta_{(1)}: 
 \opC \xrightarrow{\ \Delta \ } \opC \circ \opC
 \xrightarrow{\ \id_{\opC} \circ' \id_{\opC} \ }
 \opC \circ (\opC;\opC)
 \xrightarrow{\ \id_{\opC} \circ  (\ve;\id_{\opC}) \ }
 \opC \circ (I;\opC) = \opC \ccone \opC.
\] 
\end{dfn}

It is a decomposition of an element of $\opC$ into two parts.
Namely, using \eqref{eq:decomp} we have
\[
 \Delta_{(1)}(\mu) = 
 \sum \sum_{i=1}^n \left(\nu;\ve(\nu_1),\ldots,
  \ve(\nu_{i-1}),\nu_i,\ve(\nu_{i+1}),\ldots,\ve(\nu_n)\right)
\]

\subsubsection{Convolution Lie algebra}

Now we will now define a Lie bracket on 
the convolution pseudo-tensor category $\Hom_{PT}(\catL,\catM)$
following \cite[\S6.4]{LV}

Fix an object $(L,M) \in \catL \otimes_{\bbK} \catM$.
Then we have a reduced operad 
$\opEnd_{M}=(\opEnd_{M},\gamma_M,\eta_M)$
with the underlying $\frkS$-module 
\[
 \opEnd_M = \oplus_{n\ge 1} \opEnd_M(n),\quad
 \opEnd_M(n) := P^{\catM}_n(M)=P^{\catM}_n(\{M,\ldots,M\},M).
\]
The composition map $\gamma_M$ comes from $\gamma$ of $\catM$.
Similarly we have a reduced cooperad
$\opcoEnd_{L} = (\opcoEnd_{L},\Delta_L,\ve_L)$ with
\[
 \opcoEnd_L = \oplus_{n\ge 1} \opcoEnd_L(n),\quad
 \opcoEnd_L(n) := C^{\catL}_n(L)=C^{\catL}_n(L,\{L,\ldots,L\}).
\]
We also have a reduced operad
$\opHom_{L,M} = (\opHom_{L,M},\gamma_{L,M},\eta_{L,M})$ with  
\[
 \opHom_{L,M}     = \oplus_{n\ge1}\opHom_{L,M}(n),\quad
 \opHom_{L,M}(n) 
               := \Hom_{\bbK}\left(\opcoEnd_L(n),\opEnd_M(n)\right).
\]
Below we simply denote 
\[
 \opP := \opEnd_M,\quad
 \opC := \opcoEnd_L,\quad
 \opHom \equiv \Hom_{\bbK}(\opC,\opP) := \opHom_{L,M}.
\]
Using Definitions \ref{dfn:inf-comp:op}--\ref{dfn:inf-comp:coop} 
we introduce

\begin{dfn*}
For $f,g \in \opHom$, define $f \star g$
to be the following composition of maps.
\[
 f \star g := \
 \bigl(\opC \xrightarrow{\ \Delta_{(1)} \ } 
 \opC \ccone \opC \xrightarrow{\ f \ccone g\ }
 \opP \ccone \opP \xrightarrow{\ \gamma_{(1)} \ } \opP\bigr).
\]
\end{dfn*}

\begin{rmk*}
Precisely speaking, this definition is due to 
\cite[Proposition 6.4.3]{LV} and 
not the original definition in \cite[\S5.4.3]{LV} 
where partial composition is used.
\end{rmk*}

Using \eqref{eq:decomp} for $\mu \in \opC$ 
and denoting $\id = \eta(1) \in \opP(1)$,  
one can write down the definition as 
\begin{equation}\label{eq:star}
 (f \star g)(\mu)
= \sum \sum_{i=1}^n
 \ve(\nu_1)\cdots\ve(\nu_{i-1})\ve(\nu_{i+1})\cdots\ve(\nu_n)
 \cdot \gamma\left(f(\nu);\id,\ldots,\id,g(\nu_i),\id,\ldots,\id\right).
\end{equation}

\begin{prop}[{\cite[Proposition 6.4.3]{LV}}]
\label{prop:preLie}
The product $\star$ is a pre-Lie product, namely it satisfies
\[
 (f \star g) \star h - f \star (g \star h) = 
 (f \star h) \star g - f \star (h \star g).
\]
Hence the anti-symmetrization 
\[
 [f,g]: = f \star g - g \star f 
\]
satisfies the Jacobi rule, 
and  $\opHom$ has a structure of Lie algebra.
\end{prop}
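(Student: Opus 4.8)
The plan is to prove the pre-Lie identity by a direct computation based on the explicit formula \eqref{eq:star}, organized by the combinatorics of iterated infinitesimal (de)composition; the passage to the Jacobi identity will then be a formal consequence. Write $\mathrm{as}(f,g,h) := (f\star g)\star h - f\star(g\star h)$ for the associator, so that the pre-Lie identity is exactly the assertion that $\mathrm{as}(f,g,h)=\mathrm{as}(f,h,g)$, i.e. that the associator is symmetric in its last two arguments. Since $f\star g = \gamma_{(1)}\circ(f\ccone g)\circ\Delta_{(1)}$, computing the two triple products amounts to applying $\gamma_{(1)}$ and $\Delta_{(1)}$ twice each.

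First I would establish the iterated-decomposition identity governing these double applications. The crucial structural inputs are the coassociativity of the decomposition $\Delta$ of $\opC$ and the associativity of $\gamma$ of $\opP$, transported to the infinitesimal maps of Definitions \ref{dfn:inf-comp:op} and \ref{dfn:inf-comp:coop}. Iterating $\Delta_{(1)}$ decomposes an element of $\opC$ into a root together with two distinguished branches, and coassociativity lets me reorganize this double decomposition into a canonical sum indexed by the relative position of the two marked branches: they are either \emph{nested} (one branch lies inside the subtree cut out by the other) or \emph{parallel} (the two branches sit in distinct subtrees under the root).

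Second, I would sort the resulting terms accordingly. The nested terms of $(f\star g)\star h$ are exactly those in which $h$ is grafted into the output carrying $g$; these coincide, term by term, with the terms produced by $f\star(g\star h)$, where $h$ is first grafted onto $g$ and the composite is then grafted onto $f$. Hence the nested contributions cancel in $\mathrm{as}(f,g,h)$, and what survives is precisely the parallel contribution of $(f\star g)\star h$, in which $g$ and $h$ occupy two distinct branches of the root via $\gamma$ applied with both insertions simultaneously. This parallel contribution is manifestly unchanged under exchanging $g$ and $h$, since grafting two operations at two distinct unordered slots does not depend on the order in which they are recorded, and the sign conventions of Remark \ref{rmk:sign} are compatible with the exchange. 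Therefore $\mathrm{as}(f,g,h)=\mathrm{as}(f,h,g)$. The Jacobi identity for $[f,g]=f\star g-g\star f$ then follows from the standard anti-symmetrization argument for (graded) pre-Lie algebras: expanding the cyclic sum $[[f,g],h]+[[g,h],f]+[[h,f],g]$ and collecting terms, every contribution appears as a difference of associators, and the right-symmetry just proved makes the total vanish.

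I expect the main obstacle to be the bookkeeping in the second step. Verifying that the nested terms of $(f\star g)\star h$ match those of $f\star(g\star h)$ requires careful handling of the counit factors $\ve(\nu_i)$ appearing in \eqref{eq:star}, together with precise control of the Koszul signs introduced by the sign-graded tensor product. Establishing the iterated (de)composition identity of the first step — the analogue for $\gamma_{(1)}$ and $\Delta_{(1)}$ of operad associativity and cooperad coassociativity — is the technical heart of the argument; once it is in place, the $g\leftrightarrow h$ symmetry of the parallel part and the deduction of Jacobi are routine.
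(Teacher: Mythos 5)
Your proposal is correct and follows essentially the same route as the paper's proof: both compute the associator from the explicit formula \eqref{eq:star}, use coassociativity of $\Delta$ and the counit property to cancel the ``nested'' contributions (where $h$ lands inside the branch carrying $g$) against $f\star(g\star h)$, and observe that the surviving ``parallel'' terms $\sum_{i\neq j}\prod_{k\neq i,j}\ve(\nu_k)\cdot\gamma\bigl(f(\nu);\ldots,g(\nu_i),\ldots,h(\nu_j),\ldots\bigr)$ are manifestly symmetric in $g$ and $h$. The deduction of the Jacobi identity by anti-symmetrization is the same standard argument in both.
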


\begin{proof}
Although a proof is given in \cite[\S6.4]{LV},
let us give another proof using the explicit formula \eqref{eq:star}.
The coassociativity of $\Delta$ in $\opC$ is given by 
\[
 (\Delta \circ \id_{\opC}) \Delta = (\id_{\opC} \circ \Delta) \Delta: \ 
 \opC \longto 
 (\opC \circ \opC) \circ \opC \simeq \opC \circ (\opC \circ \opC),
\]
and on an element $\mu \in \opC$ with \eqref{eq:decomp} it means
\begin{align*}
&\sum 
 \bigl((\omega;\omega_1,\ldots,\omega_j,\ldots,);
  \nu_1,\ldots,\nu_i,\ldots,\nu_n\bigr)
\\
&=\sum  
 \bigl(\nu;
  (\omega_1;\omega_{1,1},\ldots),\ldots,
  (\omega_i;\omega_{i,1},\ldots,\omega_{i,j},\ldots),\ldots,
  (\omega_n;\omega_{n,1},\ldots)\bigr)
\end{align*}
with $\Delta(\nu)=\sum (\omega;\omega_1,\ldots)$ 
and  $\Delta(\nu_i)=\sum (\omega_i;\omega_{i,1},\ldots)$.
The formula \eqref{eq:star} yields
\begin{align*}
((f\star g) \star h)(\mu) =  \sum   
   \ve(\wh{\omega}_j) \ve(\wh{\nu}_i) \cdot 
   \gamma\bigl(\gamma(f(\omega);\id,\ldots, g(\omega_j),\ldots,\id);
   \id,\ldots, h(\nu_i),\ldots,\id\bigr)
\end{align*}
with $\ve(\wh{\omega}_j) := \prod_{k \neq j} \ve(\omega_k)$ 
and $\ve(\wh{\nu}_i) := \prod_{k \neq i} \ve(\nu_k)$.
Similarly we have
\begin{align*}
(f\star (g \star h))(\mu) = 
 \sum   
   \ve(\wh{\nu}_i) \ve(\wh{\omega}_{i,j}) \cdot 
   \gamma\bigl(f(\nu);
    \id,\ldots,
    \gamma(g(\omega_i);\id,\ldots,h(\omega_{i,j}),\ldots,\id),
    \ldots,\id\bigr)
\end{align*}
with $\wh{\omega}_{i,j}:= \prod_{k \neq j} \ve(\omega_{i,k})$. 
We also note that the counit property of $\ve$ in $\opC$ is given by
\[
 (\id_{\opC} \circ \ve) \Delta = (\ve \circ \id_{\opC}) \Delta = \id_{\opC}:\ 
 \opC \longto \opC \circ I \simeq I \circ \opC \simeq \opC.
\]
On the element $\nu_i$  it means
$\sum \prod_k \ve(\omega_{i,k}) \cdot (\omega_i;1,\ldots,1) 
 = \nu_i$,
where $(\omega_i;1,\ldots,1) \in \opC \circ I \simeq \opC$.
In particular,  we have
$\sum \prod_k \ve(\omega_{i,k}) g(\omega_i) = g(\nu_i)$
for $g \in \opHom_{L,M}$.
This formula and the associativity of $\gamma$ implies 
\begin{align*}
((f\star g) \star h)(\mu) =  \sum   
   \ve(\wh{\omega}_j) \ve(\wh{\nu}_i) \cdot 
   \gamma\bigl(
     \gamma(f(\omega);\id,\ldots, g(\omega_j),\ldots,\id);
     \id,\ldots, h(\nu_i),\ldots,\id
   \bigr)
\end{align*}
\begin{align*}
\bigl((f\star g) \star h - f\star (g \star h)\bigr)(\mu) = 
\sum_{i \neq j} 
   \prod_{k \neq i,j}\ve(\nu_k) \cdot 
   \gamma\bigl(f(\nu);
    \id,\ldots,g(\nu_i),\ldots, h(\nu_j),\ldots,\id\bigr).
\end{align*}
This expression is symmetric for $g$ and $h$, so that we have the result.
\end{proof}

\begin{dfn*}
Denote the  subspace of $\frkS$-equivariant morphisms in $\opHom$ by 
\[
 \Hom_{\frkS}(\opC,\opP)
  := \bigoplus_n \Hom_{\frkS_n}\bigl(\opC(n),\opP(n)\bigr).
\]
\end{dfn*}

The proof of Proposition \ref{prop:preLie} implies

\begin{lem*}[{\cite[Lemma 6.4.4]{LV}}]
$\Hom_{\frkS}(\opC,\opP)$ is stable under the pre-Lie product $\star$,
so that it is a Lie algebra.
\end{lem*}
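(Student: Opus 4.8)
The plan is to show directly that the subspace $\Hom_{\frkS}(\opC,\opP)$ is closed under the product $\star$; stability under the bracket $[f,g]=f\star g-g\star f$ and the Jacobi identity then follow at once from Proposition \ref{prop:preLie}. So the whole content of the lemma is the single equivariance statement ``$f,g\in\Hom_{\frkS}(\opC,\opP)\Rightarrow f\star g\in\Hom_{\frkS}(\opC,\opP)$''.

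First I would unwind the definition of $\star$ as the three-step composite
\[
 f \star g = \gamma_{(1)}\circ\bigl(f\ccone g\bigr)\circ\Delta_{(1)},
\]
and observe that both the infinitesimal decomposition map $\Delta_{(1)}$ of the cooperad $\opC$ (Definition \ref{dfn:inf-comp:coop}) and the infinitesimal composition map $\gamma_{(1)}$ of the operad $\opP$ (Definition \ref{dfn:inf-comp:op}) are morphisms of $\frkS$-modules: each is assembled from the structure maps $\Delta,\ve$ (resp.\ $\gamma,\eta$) together with identities, all of which are $\frkS$-module morphisms by the very definition of (co)operad. Thus $\frkS$-equivariance of $f\star g$ reduces to $\frkS$-equivariance of the middle arrow $f\ccone g$.

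The key step is therefore to check that, when $f$ and $g$ are $\frkS$-equivariant, the map $f\ccone g:\opC\ccone\opC\to\opP\ccone\opP$ of Definition \ref{dfn:inf-comp:mor} is a morphism of $\frkS$-modules. Here I would spell out the action on the arity-$(n-1+m)$ component of $M\ccone N=M\circ(I;N)$, on which an element has the form $(\mu;k_1,\ldots,k_{i-1},\nu,k_{i+1},\ldots,k_n)$ with $\mu\in M(n)$, $\nu\in N(m)$ and $k_j\in I\cong\bbK$. A permutation acts by the given right action on the $M(n)$-slot, by the induced permutation of the positions $1,\ldots,n$ (which carries the distinguished $N$-slot elsewhere), and on $N(m)$ through the block it is carried to. Since $f\ccone g$ applies $f$ on the $M(n)$-slot and $g$ on the single $N$-slot while leaving the combinatorial data untouched, equivariance of $f$ (as a map $\opC(n)\to\opP(n)$) and of $g$ (as a map $\opC(m)\to\opP(m)$) is exactly what makes the two sides agree. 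I expect this bookkeeping of the action on $\ccone$---tracking which slot the permutation carries $\nu$ into---to be the main, if routine, obstacle.

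Combining these observations, $f\star g$ is a composite of $\frkS$-module morphisms whenever $f,g\in\Hom_{\frkS}(\opC,\opP)$, hence itself lies in $\Hom_{\frkS}(\opC,\opP)$. Therefore $\Hom_{\frkS}(\opC,\opP)$ is stable under $\star$, so it is stable under the antisymmetrization $[f,g]=f\star g-g\star f$, and by Proposition \ref{prop:preLie} the Jacobi identity holds there, making $\Hom_{\frkS}(\opC,\opP)$ a Lie subalgebra of $\opHom$. As an alternative one could verify the equivariance from the explicit formula \eqref{eq:star}, following the $\ve$-factors and the single $\gamma$-term under a permutation of inputs exactly as in the proof of Proposition \ref{prop:preLie}; but the structural argument above seems cleaner and I would present that.
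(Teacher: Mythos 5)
Your argument is correct and complete. Note, though, that the paper itself gives essentially no proof here: it simply states that the lemma follows from the proof of Proposition \ref{prop:preLie}, i.e.\ from the explicit formula \eqref{eq:star} for $\star$, which is the route you mention only as an alternative at the end. Your primary argument is the structural one: writing $f\star g=\gamma_{(1)}\circ(f\ccone g)\circ\Delta_{(1)}$, observing that $\Delta_{(1)}$ and $\gamma_{(1)}$ are $\frkS$-module morphisms because they are assembled from the (co)operad structure maps, and reducing everything to the equivariance of $f\ccone g$, which is exactly Definition \ref{dfn:inf-comp:mor} applied to equivariant $f$ and $g$. This is cleaner and is in fact closer to the argument in Loday--Vallette; it also sidesteps the well-definedness issue that the explicit formula \eqref{eq:star} has for non-equivariant maps (the decomposition $\Delta(\mu)$ lives in a space of $\frkS_n$-coinvariants, so the representative $(\nu;\nu_1,\ldots,\nu_n)$ is only defined up to the $\frkS_n$-action, and equivariance of $f$ and $g$ is what makes the right-hand side independent of that choice). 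The only slightly delicate point, which you correctly identify as the main bookkeeping, is the description of the $\frkS$-action on $M\ccone N=M\circ(I;N)$ and the verification that $f\ccone g$ intertwines it; your account of how a permutation acts on the $M(n)$-slot, on the positions, and on the distinguished $N$-slot is accurate, so the proof goes through.
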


We also see that 
the differential $\partial$ in  $\Hom_{PT}(\catL,\catM)$ 
in Definition \ref{dfn:dgHom} is compatible with this Lie bracket.
Hence 

\begin{fct*}[{\cite[Proposition 6.4.5]{LV}}]
For an operad $\opP$ and a cooperad $\opC$
we have a dg Lie algebra 
\[
 \bigl(\Hom_{\frkS}(\opC,\opP), [\, ], \partial\bigr).
\]
\end{fct*}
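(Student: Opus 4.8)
The plan is to verify the three defining properties of a dg Lie algebra on $\Hom_{\frkS}(\opC,\opP)$: that $\partial$ is a differential ($\partial^2=0$), that the bracket $[\,]$ and the map $\partial$ both restrict to the subspace of $\frkS$-equivariant morphisms, and that $\partial$ is a graded derivation of $[\,]$. The first is immediate from Definition \ref{dfn:dgHom} and was already noted. For the restriction, stability of the bracket is exactly the preceding Lemma, while $\partial f = d_P f - (-1)^{|f|} f d_C$ is $\frkS$-equivariant whenever $f$ is, since both internal differentials $d_P$ and $d_C$ are $\frkS$-equivariant (being the differentials of the dg $\frkS$-modules $\opP=\opEnd_M$ and $\opC=\opcoEnd_L$). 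Thus the heart of the matter is the Leibniz rule
\[
 \partial[f,g] = [\partial f,g] + (-1)^{|f|}[f,\partial g].
\]

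Since $[f,g]=f\star g - g\star f$ is the antisymmetrization of the pre-Lie product, it suffices to establish the graded derivation property for $\star$ itself, namely
\[
 \partial(f \star g) = (\partial f)\star g + (-1)^{|f|} f \star (\partial g),
\]
and then antisymmetrize. I would prove this from the functorial description
\[
 f \star g = \gamma_{(1)} \circ (f \ccone g) \circ \Delta_{(1)},
\]
using that $\Delta_{(1)}\colon \opC \to \opC \ccone \opC$ and $\gamma_{(1)}\colon \opP \ccone \opP \to \opP$ are morphisms of dg $\frkS$-modules, i.e.\ chain maps. This is a consequence of the dg (co)operad structures: the decomposition $\Delta$ and the composition $\gamma$ commute with the differentials, and $\Delta_{(1)}$, $\gamma_{(1)}$ are assembled from $\Delta$, $\gamma$, $\ve$ and $\eta$ via the formulas of Definitions \ref{dfn:inf-comp:op} and \ref{dfn:inf-comp:coop}, which are themselves chain maps.

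The remaining point, and the main obstacle, is the internal derivation property of the bifunctor $\ccone$. Equipping $\opC \ccone \opC$ and $\opP \ccone \opP$ with their Koszul differentials, I must show that commuting $f \ccone g$ past these differentials produces precisely $(\partial f)\ccone g + (-1)^{|f|}(f \ccone \partial g)$, with the signs dictated by the Koszul rule of Remark \ref{rmk:sign}. This is where all the sign bookkeeping lives: the differential on an element $(\mu;k_1,\ldots,\nu,\ldots,k_n)$ splits into a term differentiating the outer operation $\mu$ and a term differentiating the inserted argument $\nu$, and $f \ccone g$ must be moved past each with the sign determined by the degrees of $f$, $g$ and the intervening tensor factors. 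Once this identity is secured, pre- and post-composing with the chain maps $\Delta_{(1)}$ and $\gamma_{(1)}$ yields the Leibniz rule for $\star$, and antisymmetrizing finishes the proof.

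Alternatively, one could argue entirely through the explicit formula \eqref{eq:star} for $f \star g$, applying $\partial$ in the form of Definition \ref{dfn:dgHom} term by term and collecting the resulting cancellations; this reproduces the same identity, but I expect the signs to be considerably more transparent in the functorial formulation above.
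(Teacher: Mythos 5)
Your outline is correct and follows essentially the same route as the paper (which itself only asserts this Fact, citing Loday--Vallette, after proving the pre-Lie/Lie structure and $\frkS$-stability in the preceding Proposition and Lemma): the substantive content is precisely the Leibniz compatibility of $\partial$ with $\star$, which you correctly reduce to the chain-map property of $\Delta_{(1)}$ and $\gamma_{(1)}$ and the Koszul-signed derivation identity for $f \ccone g$. The only caveat is that in the graded setting the antisymmetrization should read $[f,g]=f\star g-(-1)^{|f|\,|g|}g\star f$ for your Leibniz rule for $[\,]$ to follow from the one for $\star$ (the paper's own formula omits this sign as well).
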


In particular, going back to our original situation, we have

\begin{cor}
\label{cor:g_CM}
Let $\catM$ be a dg pseudo-category, $M \in \catM$ and 
$\opEnd^{\catM}_M$ the associated dg operad.
Also let $\opC$ be a cooperad.
Then we have a dg Lie algebra 
\[
 \bigl(\Hom_{\frkS}(\opC,\opEnd^{\catM}_M), [\, ], \partial \bigr).
\]
\end{cor}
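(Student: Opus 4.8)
The plan is to obtain the statement as a direct specialization of the Fact (\cite[Proposition 6.4.5]{LV}) stated immediately above, taking the operad to be $\opP=\opEnd^{\catM}_M$ and the cooperad to be the given $\opC$. All the structure is already assembled: Proposition \ref{prop:preLie} produces the pre-Lie product $\star$ and hence the bracket $[f,g]=f\star g-g\star f$ on $\opHom=\Hom_{\bbK}(\opC,\opEnd^{\catM}_M)$; the stability Lemma (\cite[Lemma 6.4.4]{LV}) confines it to the $\frkS$-equivariant subspace $\Hom_{\frkS}(\opC,\opEnd^{\catM}_M)$; and Definition \ref{dfn:dgHom} furnishes the differential $\partial$. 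Hence the only genuine point to settle is that $\opEnd^{\catM}_M$ qualifies as a \emph{dg} operad, so that these constructions apply in the graded setting.

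To that end I would first transport the dg structure from $\catM$ to $\opEnd^{\catM}_M$. Since $\catM$ is a dg pseudo-tensor category, each operation space $\opEnd^{\catM}_M(n)=P^{\catM}_n(M)$ is a complex over $\bbK$, and by definition the composition maps together with the $\frkS_n$-actions are morphisms of complexes; consequently the infinitesimal composition $\gamma_{(1)}$ of Definition \ref{dfn:inf-comp:op} is a chain map, which is exactly what it means for $\opEnd^{\catM}_M$ to be a dg operad. I would then regard the given cooperad $\opC$ as a dg cooperad carrying the zero differential, so that its infinitesimal decomposition $\Delta_{(1)}$ is trivially a chain map and the differential $\partial$ of Definition \ref{dfn:dgHom} reduces to $f \mapsto d_P \circ f$, post-composition with the internal differential of $\opEnd^{\catM}_M$.

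With these identifications the Corollary is the Fact applied verbatim, and the step I would flag as the crux is the verification that $\partial$ is a graded derivation of the bracket, namely $\partial[f,g]=[\partial f,g]+(-1)^{|f|}[f,\partial g]$. I expect this to follow formally from $\gamma_{(1)}$ and $\Delta_{(1)}$ being chain maps, by feeding the Leibniz rule for the differential on $\Hom_{\bbK}$-spaces through the explicit expression \eqref{eq:star} for $\star$; combined with $\partial^2=0$ (noted just after Definition \ref{dfn:dgHom}), this yields the dg Lie algebra $\bigl(\Hom_{\frkS}(\opC,\opEnd^{\catM}_M),[\,],\partial\bigr)$ asserted in the statement.
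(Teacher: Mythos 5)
Your proposal is correct and matches the paper's own treatment: the Corollary is stated there as an immediate specialization of the preceding Fact (\cite[Proposition 6.4.5]{LV}) to $\opP=\opEnd^{\catM}_M$, with the bracket coming from Proposition \ref{prop:preLie} and the stability Lemma, and $\partial$ from Definition \ref{dfn:dgHom}. The extra care you take in checking that $\opEnd^{\catM}_M$ is a dg operad and that $\opC$ carries the zero differential is left implicit in the paper but is exactly the right justification.
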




For any dg Lie algebra 
$\g=\bigl(\g, [\, ], \partial\bigr)$ 
one can consider the Maurer-Cartan equation in it.
\[
 \partial(\alpha)+\dfrac{1}{2}[\alpha,\alpha]=0, \quad \alpha \in \g.
\]
In the next subsection we recall the meaning of the solution of 
this Maurer-Cartan equation in $\Hom_{\frkS}(\opC,\opP)$
following \cite[\S6.5]{LV}.
We close this subsection by  

\begin{dfn*}
We denote by $\Tw(\opC,\opP)$ 
the space of homogeneous solutions of 
the Maurer-Cartan equation of degree $-1$.
\[
 \Tw(\opC,\opP)
 := \bigl\{ \alpha \in \Hom_{\frkS}(\opC,\opP) \mid 
    |\alpha|=-1, \ \partial(\alpha)+\tfrac{1}{2}[\alpha,\alpha]=0 \bigr\}.
\]
We call its element a \emph{twisted morphism} 
from $\opC$ to $\opP$.
\end{dfn*}

%
%

\subsection{Koszul operads}
\label{subsec:Koszul}


\subsubsection{Cobar construction and twisting morphisms}
\label{sss:bar}
Let us recall the \emph{bar construction} of an augmented dg operad.

First we recall the free operad $\opF(M)$ and the cofree cooperad $\opF^c(M)$ 
of an $\frkS$-module $M$.
We refer \cite[\S5.6, \S5.8]{LV} for a full account.

Let $M$ be an $\frkS$-module.
Define inductively $\opT_n M$ by
\[
 \opT_0 M := I, \quad
 \opT_1 M := I \oplus M, \quad
 \opT_n M := I \oplus (M \circ \opT_{n-1} M)\ (n\ge2).  
\]
Using the inclusion $\opT_n M \hookrightarrow \opT_{n+1}$,
we get an $\frkS$-module $\opT M$ given by
\[
 \opT M := \cup_{n\ge0}\opT_n M = \colim_{n\ge0} \opT_n M.
\]
Then $\opT M$ has a structure of an operad 
such that any $\frkS$-module morphism $f:M \to \opP$ 
to an operad extends uniquely to an operad morphism 
$\opT M \to \opP$.
We will denote the resulting operad by $\opF(M)$.

A dual construction also exists.
Recall that a coaugmented cooperad $\opC=(\opC,\Delta,\ve,\eta)$ 
is a cooperad $(\opC,\Delta,\ve)$ 
with a cooperad morphism $\eta:I \to \opC$ 
such that $\ve \eta = \id_{I}$.
Here $I$ denotes the identity cooperad 
whose underlying $\frkS$-module is given by \eqref{eq:op:I}.
$\eta$ is called the coaugmented morphism of $\opC$.
The image of $1 \in I(1) = \bbK \subset I$ is denoted by 
$\id \in \opC(1)$.

The $\frkS$-module $\opT M$ explained above 
has a structure of cooperad
such that any $\frkS$-module morphism $g:\opC \to M$ 
sending $\id$ to $0$ from a conilpotent cooperad $\opC$
factors through $\opT M$ 
(we have the projection $\opT M \twoheadrightarrow M$).
We will denote by $\opF^c(M)$ this universal cooperad,
and call it the \emph{cofree cooperad} of $M$.


Let us introduce the notation for the \emph{cobar construction}.
For a coaugmented dg cooperad $\opC=(\opC,\Delta,\ve,\eta)$, 
there is an augmented dg operad $\Omega \opC$
whose operad structure is given by
\[
 \Omega \opC := \opF(s^{-1} \overline{\opC}).
\]
Here $\overline{\opC}$ is the coaugmentation coideal 
$\Cok(\eta:I \to \opC)$ 
of the coaugmented cooperad $\opC$.
We skip the explanation of the dg structure and 
refer \cite[\S6.5]{LV} for the full account.
$\Omega \opC$ is called the \emph{cobar construction}.

Now we can state the following representability of the functor 
given by $\Tw$.

\begin{fct}[{\cite[Theorem 6.5.7]{LV}}]
\label{fct:OmTwB}
For every augmented dg operad $\opP$
and conilpotent dg cooperad $\opC$,
there exist natural isomorphisms
\[
 \Hom_{dg Op}(\Omega \opC,\opP)  \simeq 
 \Tw(\opC,\opP).
\]
Here $dg Op$ denotes 
the category of dg operads. 
\end{fct}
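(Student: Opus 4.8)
The plan is to exploit the freeness of the cobar construction and to match its differential against the two terms of the Maurer--Cartan equation. Recall from \S\ref{sss:bar} that $\Omega\opC = \opF(s^{-1}\overline{\opC})$ is, as a graded operad, the free operad on the desuspended coaugmentation coideal. By the universal property of the free operad recalled there, a morphism of graded operads $\opF(s^{-1}\overline{\opC}) \to \opP$ is the same datum as a morphism of $\frkS$-modules $s^{-1}\overline{\opC} \to \opP$. Because of the shift $s^{-1}$, such a morphism is in turn the same as a degree $-1$ map $\overline{\opC} \to \opP$; extending it by zero along the coaugmentation (equivalently, sending $\id \in \opC(1)$ to $0$) produces an element $\alpha \in \Hom_{\frkS}(\opC,\opP)$ of degree $-1$. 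First I would make this assignment $F \mapsto \alpha$ precise and check that it is a bijection of sets at the level of underlying graded objects, with no condition yet imposed on differentials.

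The heart of the argument is then to show that the requirement that $F$ be a morphism of \emph{dg} operads, that is $F d_{\Omega\opC} = d_\opP F$, corresponds exactly to $\alpha \in \Tw(\opC,\opP)$. Since both $d_{\Omega\opC}$ and $d_\opP$ are determined by their values on generators (the former as a derivation, the latter because $F$ respects operadic composition), it suffices to test this identity on the generating $\frkS$-module $s^{-1}\overline{\opC}$. The cobar differential splits there into two pieces: an internal part induced by $d_{\opC}$, and a quadratic part which is the derivation extending the composite $s^{-1}\overline{\opC} \xrightarrow{\Delta_{(1)}} s^{-1}\overline{\opC} \ccone s^{-1}\overline{\opC}$ built from the infinitesimal decomposition of Definition \ref{dfn:inf-comp:coop}. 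Matching the internal part against $d_\opP$ reproduces the term $\partial(\alpha)$ of Definition \ref{dfn:dgHom}, namely $d_\opP\alpha - (-1)^{|\alpha|}\alpha\, d_{\opC}$, while matching the quadratic part, after applying $F$ and the operadic composition $\gamma$, reproduces $\alpha \star \alpha$ via the formula \eqref{eq:star}. Since $\alpha$ feeds both slots and has degree $-1$, the quadratic term equals $\tfrac12[\alpha,\alpha]$, so the chain-map condition becomes precisely $\partial(\alpha) + \tfrac12[\alpha,\alpha] = 0$.

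Naturality in $\opP$ and $\opC$ then follows formally, since every ingredient (freeness, the shift, $\Delta_{(1)}$, $\gamma$) is functorial. The main obstacle I anticipate is bookkeeping rather than conceptual: one must track the desuspension $s^{-1}$ and the Koszul signs of Remark \ref{rmk:sign} carefully enough to see that the quadratic part of the cobar differential maps \emph{exactly} to $\alpha\star\alpha$, with the correct sign and the factor making it the symmetric bracket, and to confirm that the internal part contributes precisely $\partial(\alpha)$. A secondary point requiring care is that checking the chain-map identity on generators genuinely suffices; this uses that $d_{\Omega\opC}$ is the unique derivation with the prescribed value on $s^{-1}\overline{\opC}$ and that $F$ respects composition, together with the conilpotency of $\opC$, which guarantees that $\Delta_{(1)}$, and hence the Maurer--Cartan sum, are finite in each arity.
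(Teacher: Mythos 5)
Your argument is correct and is essentially the standard proof of this statement as given in the cited reference \cite[Theorem 6.5.7]{LV}; the paper states this as a Fact with that citation and does not reprove it. Your two-step structure — using freeness of $\Omega\opC = \opF(s^{-1}\overline{\opC})$ to reduce a graded operad morphism to a degree $-1$ map $\overline{\opC}\to\opP$, then matching the internal and quadratic parts of the cobar differential against $\partial(\alpha)$ and $\tfrac{1}{2}[\alpha,\alpha]=\alpha\star\alpha$ — is exactly the intended argument, including the correct observation that conilpotency guarantees finiteness of $\Delta_{(1)}$ in each arity.
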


\subsubsection{Koszul dual}
\label{sss:Koszul-dual}

Let us recall the Koszul dual cooperad 
of a quadratic operad.
We follow the description in \cite[\S\S7.1--7.2]{LV}.

Recall the free operad $\opF(E)$ associated to an $\frkS$-module $E$.
$\opF(E)$ has a weight grading $w$ defined by
\[
 w(1)=0,\quad w(\mu)=1 \ (\mu \in E(n), n>0)
\]
where $1$ is the basis of the first part $I(1)=\bbK$ 
of the identity $\frkS$-module $I$,
and 
\[
w(\mu;\nu_1,\ldots,\nu_n)
:= w(\mu)+w(\nu_1)+\cdots+w(\nu_n)
\]
for $(\mu;\nu_1,\ldots,\nu_n) \in \opT_n E$.
Denote by $\opF(E)^{(d)}$ the weight $d$ subspace.

Let $(E,R)$ be a pair of a graded $\frkS$-module $E$ and 
a graded sub-$\frkS$-module $R \in \opF(E)^{(2)}$.
Such a pair is called a \emph{quadratic operad}.
The \emph{quadratic operad} associated to $(E,R)$ is defined to be 
\[
 \opP(E,R) := \opF(E)/(R)
\]
where $(R)$ is the (operadic) ideal generated by $R$.
In terms of the universality,
it is universal among the quotient operads $\opP$ of $\opF(E)$ 
such that the composite 
$R \hookrightarrow \opF(E) \twoheadrightarrow \opP$
vanishes.
$\opP(E,R)$ has a weight grading induced by that on $\opF(E)$,
and the weight $d$ subspace is denoted by $\opP(E,R)^{(d)}$.

Similarly, the \emph{quadratic cooperad} $\opC(E,R)$ 
associated to $(E,R)$ 
is the sub-cooperad of the cofree cooperad $\opF^c(E)$ 
which is among the sub-cooperads $\opC$ of $\opF^c(E)$ 
such that the composite 
$\opC \hookrightarrow \opF^c(E) \twoheadrightarrow \opF^c(E)^{(2)}/(R)$ 
vanishes.
The underlying $\frkS$-module of $\opC(E,R)$ is $\opF E$,
and $\opC(E,R)$ has a weight grading.
The weight $d$ subspace is denoted by $\opP(E,R)^{(d)}$.

Now for a quadratic operad $\opP = \opP(E,R)$,
its \emph{Koszul dual cooperad} is defined to be the quadratic cooperad 
\[
 \opP^{c!} := \opC(s E, s^2 R)
\]

\subsubsection{Koszul operad}

%
%
%

A \emph{Koszul operad} is a quadratic operad $\opP$ 
whose Koszul complex is acyclic.
We will not give the definition of Koszul complex for a quadratic operad,
and refer \cite[\S7.4]{LV}.

Let us mention following criterion of Koszul-ness.
By the definition of the cobar construction, 
for a quadratic operad $\opP=\opP(E,R)$ 
we have a natural projection
\[
 p: \Omega \opP^{c!} \longtwoheadrightarrow \opP. 
\]

\begin{fct}[{\cite[Theorem 7.4.2]{LV}}]
\label{fct:koszul}
A quadratic operad $\opP$ is Koszul if and only if 
the natural projection $p$
is a quasi-isomorphism of dg operads.
\end{fct}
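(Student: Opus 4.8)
The plan is to recognize the projection $p$ as the dg-operad morphism attached to the canonical \emph{Koszul twisting morphism}, and then to reduce the statement to a comparison lemma relating acyclicity of a twisted complex with a quasi-isomorphism. The quadratic presentation $\opP=\opP(E,R)$ singles out a distinguished morphism $\kappa$, namely the composite $\opP^{c!} \twoheadrightarrow sE \xrightarrow{\,s^{-1}\,} E \hookrightarrow \opP$ onto the weight-$1$ part followed by desuspension and the inclusion of generators. One checks directly that $\kappa$ has homological degree $-1$ and satisfies the Maurer--Cartan equation, so $\kappa \in \Tw(\opP^{c!},\opP)$. Under the representability isomorphism $\Hom_{dg Op}(\Omega\opP^{c!},\opP) \simeq \Tw(\opP^{c!},\opP)$ of Fact~\ref{fct:OmTwB}, the dg-operad morphism corresponding to $\kappa$ is exactly $p$; call it $g_\kappa$. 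Thus the assertion becomes: $\kappa$ is a \emph{Koszul morphism} if and only if $g_\kappa=p$ is a quasi-isomorphism.

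First I would set up the twisted complexes. From $\kappa$ and the infinitesimal (de)composition maps $\Delta_{(1)}$, $\gamma_{(1)}$ one builds the Koszul differential $d_\kappa$ on the composite $\frkS$-module $\opP^{c!}\circ\opP$ (and symmetrically on $\opP\circ\opP^{c!}$): it splits off one cogenerator on the $\opP^{c!}$-side, pushes it into $\opP$ via $\kappa$, and composes. Since a quadratic operad carries no internal differential, $d_\kappa$ is the whole differential, and it preserves total weight while lowering $\opP^{c!}$-weight by one. Hence in each total weight $d$ the complex $(\opP^{c!}\circ_\kappa\opP)^{(d)}$ is finite. The bookkeeping step is to identify this with the complex whose acyclicity in each positive weight defines Koszulness, so that Koszulness becomes precisely the statement that $\kappa$ is a Koszul morphism.

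The analytic heart is the comparison lemma for twisting morphisms: for a twisting morphism $\alpha\colon\opC\to\opP$ from a conilpotent dg cooperad to a connected weight-graded dg operad, the twisted composite $\opP\circ_\alpha\opC$ is acyclic if and only if $g_\alpha\colon\Omega\opC\to\opP$ is a quasi-isomorphism (and, on the bar side, if and only if the induced $\opC\to B\opP$ is a quasi-isomorphism). Applying this with $\alpha=\kappa$ and $\opC=\opP^{c!}$ closes the argument. To prove the comparison lemma I would filter the relevant (co)bar-type complexes by total weight; finiteness in each weight makes the associated spectral sequence converge without any completeness hypothesis. On the associated graded the twisting part of the differential vanishes, because it strictly decreases the cooperad weight, so the computation collapses to an untwisted composite, which one analyzes against the acyclicity of the augmented bar--cobar resolution $\Omega B\opP \simto \opP$; a second weight filtration shows that $\Omega$ carries weight-graded quasi-isomorphisms of connected cooperads to quasi-isomorphisms, which is what transports the statement through the cobar construction.

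I expect the genuine obstacle to be exactly this comparison lemma: constructing the weight filtration, checking convergence and the vanishing of the twisting term on the $E^0$-page, and verifying that $\Omega$ preserves the pertinent quasi-isomorphisms. Everything else is formal once the weight-grading and suspension conventions are pinned down, namely exhibiting $\kappa$, matching $p$ with $g_\kappa$ through Fact~\ref{fct:OmTwB}, and reading the Koszul complex as the twisted composite $\opP^{c!}\circ_\kappa\opP$.
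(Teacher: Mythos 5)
The paper does not prove this statement: it is quoted verbatim as a Fact from \cite[Theorem 7.4.2]{LV}, and the author explicitly declines even to define the Koszul complex, deferring entirely to \cite[\S 7.4]{LV}. Your outline --- exhibiting the canonical twisting morphism $\kappa\colon \opP^{c!}\twoheadrightarrow sE\to E\hookrightarrow\opP$, matching $p$ with $g_\kappa$ via the representability isomorphism of Fact~\ref{fct:OmTwB}, reading the Koszul complex as the twisted composite, and reducing everything to the comparison lemma for twisting morphisms proved by weight filtrations and convergent spectral sequences --- is a faithful reconstruction of the argument in that reference, so there is nothing to compare against within the paper itself; your identification of the comparison lemma as the genuine content of the proof is exactly right.
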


\subsection{Twisted morphism and algebra structure}
\label{subsec:mc-alg}


Fix a pseudo-tensor category by $\catM$.
As before, for an object $M \in \catM$, 
we denote the associated operad by 
\[
 \opEnd_M \equiv \opEnd_M^{\catM}:= \oplus_{n \ge 1} P^{\catM}_n(M).
\]

For a Koszul operad $\opB$, a homotopy $\opB$-algebra,
or a $\opB_\infty$-algebra, in $\catM$ 
means an $\Omega \opB^{c!}$-algebra in $\catM$.
Thus a $\opB_\infty$-algebra structure on $M \in \catM$ 
is a morphism of dg operads
\[
 \Omega \opB^{c!} \longto \opEnd_M.
\]
A $\opB$-algebra is an example of $\opB_\infty$-algebra,
since the structure morphism gives
\[
  \Omega \opB^{c!} \xrightarrow{\ qis \ } \opB \longto \opEnd_M.
\]
Here $qis$ is the quasi-isomorphism given by Fact \ref{fct:koszul}.

By Fact \ref{fct:OmTwB} and Corollary \ref{cor:g_CM}, 
a morphism of dg operads $\Omega \opB^{c!} \to \opEnd_M$
is equivalent to a twisted morphism 
in the dg Lie algebra.

\begin{dfn}
\label{dfn:g_BM}
\[
 \g \equiv \g^{\catM}_{\opB,M} 
 := \Hom_{\frkS}\bigl(\opB^{c!},\opEnd^{\catM}_M\bigr).
\]
\end{dfn}

It has a weight grading $\g=\oplus_{n\ge0}\g^{(n)}$ 
given by
\[
 \g^{(n)} := \Hom_{\frkS}\bigl((\opB^{c!})^{(n)},\opEnd^{\catM}_M\bigr),
\]
where $\opB^{c!} = \oplus_{n\ge0} (\opB^{c!})^{(n)}$ 
denotes the weight decomposition of the Koszul dg cooperad $\opB^{c!}$.

Now writing down the differential of $\g$, we have

\begin{fct}[{\cite[Proposition 10.1.4]{LV}}]
\label{fct:Koszul}
For a Koszul operad $\opB$, 
a $\opB_\infty$-algebra in a pseudo-tensor category $\catM$ 
is a $\opB$-algebra
if and only if its twisting morphism is concentrated in weight $1$.
\end{fct}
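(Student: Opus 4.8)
The plan is to reduce the statement to a weight analysis of the Maurer--Cartan equation. By Fact~\ref{fct:OmTwB} together with Corollary~\ref{cor:g_CM}, a $\opB_\infty$-algebra structure on $M$, that is a dg operad morphism $f\colon\Omega\opB^{c!}\to\opEnd_M$, is the same datum as a twisting morphism $\alpha\in\Tw(\opB^{c!},\opEnd_M)\subset\g^{\catM}_{\opB,M}$; under this correspondence $\alpha$ is the desuspended restriction of $f$ to the generators $s^{-1}\overline{\opB^{c!}}$ of $\Omega\opB^{c!}=\opF(s^{-1}\overline{\opB^{c!}})$. First I would record that the weight grading $\g=\oplus_{n\ge0}\g^{(n)}$ of Definition~\ref{dfn:g_BM} is compatible with the dg Lie structure: the differential $\partial$ preserves weight (since $\opB^{c!}$ carries no internal differential), while the $\star$-product, built from the infinitesimal decomposition $\Delta_{(1)}$ of Definition~\ref{dfn:inf-comp:coop} which on $\opB^{c!}$ splits weight $n$ into weight $a$ composed with weight $b$ for $a+b=n$, satisfies $[\g^{(a)},\g^{(b)}]\subset\g^{(a+b)}$. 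Writing $\alpha=\sum_{n\ge1}\alpha_n$ with $\alpha_n\in\g^{(n)}$, the Maurer--Cartan equation therefore splits into its weight components
\[
 \partial\alpha_n+\tfrac12\sum_{i+j=n}[\alpha_i,\alpha_j]=0.
\]

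For the direction asserting that an honest $\opB$-algebra yields a weight-$1$ twisting morphism, I would use the canonical projection $p\colon\Omega\opB^{c!}\to\opB$. To say that the $\opB_\infty$-structure \emph{is} a $\opB$-algebra means that $f$ factors as $\Omega\opB^{c!}\xrightarrow{\,p\,}\opB\to\opEnd_M$. By construction $p$ is the operad morphism extending the identification $s^{-1}(\opB^{c!})^{(1)}=s^{-1}sE=E\hookrightarrow\opB$ and sending every higher-weight generator $s^{-1}(\opB^{c!})^{(n)}$, $n\ge2$, to zero. Restricting the factored $f$ to generators then kills all weights $\ge2$, so the associated $\alpha$ lies in $\g^{(1)}$, i.e.\ is concentrated in weight $1$.

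Conversely, suppose $\alpha=\alpha_1\in\g^{(1)}$. Then the weight-decomposed Maurer--Cartan equation collapses: weight $1$ gives $\partial\alpha_1=0$, weight $2$ gives $[\alpha_1,\alpha_1]=0$, and all weights $\ge3$ are vacuous since $[\alpha_i,\alpha_j]$ with $i+j\ge3$ and $i,j\ge1$ forces $\alpha_i$ or $\alpha_j$ to vanish. Now $\alpha_1\in\Hom_{\frkS}((\opB^{c!})^{(1)},\opEnd_M)=\Hom_{\frkS}(sE,\opEnd_M)$ is, after desuspension, an $\frkS$-module map $\varphi\colon E\to\opEnd_M$, which the universal property of the free operad extends to $\opF(E)\to\opEnd_M$; the equation $\partial\alpha_1=0$ is exactly what makes this extension compatible with $d_P$. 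The remaining task is to show that $[\alpha_1,\alpha_1]=0$ on the weight-$2$ piece $(\opB^{c!})^{(2)}=s^2R$ is precisely the vanishing of the extension on $R\subset\opF(E)^{(2)}$; granting this, $\varphi$ descends to an operad morphism $\opB=\opF(E)/(R)\to\opEnd_M$, i.e.\ a $\opB$-algebra, and precomposition with $p$ recovers the original $\opB_\infty$-structure.

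The main obstacle is precisely this last identification: matching $[\alpha_1,\alpha_1]|_{(\opB^{c!})^{(2)}}$ with the evaluation of $\varphi$ on the relations $R$. This requires unwinding the explicit formula \eqref{eq:star} for $\star$ together with $\Delta_{(1)}$ on the weight-$2$ part of the quadratic cooperad $\opB^{c!}=\opC(sE,s^2R)$, and carefully tracking the suspension signs coming from $s$, $s^2$ and $s^{-1}$ so that the pairing of $\opC(sE,s^2R)^{(2)}$ against $\opF(E)^{(2)}\supset R$ comes out correctly. Everything else in the argument is formal bookkeeping with the weight grading.
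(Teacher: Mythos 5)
The paper does not actually prove this statement: it is quoted as a Fact from \cite[Proposition 10.1.4]{LV}, so there is no internal proof to compare against. Your sketch is the standard argument from that reference --- compatibility of the weight grading with $\partial$ and $[\,\ ]$, the resulting weight decomposition of the Maurer--Cartan equation, factorization through $p\colon\Omega\opB^{c!}\to\opB$ for the forward direction, and the identification of a weight-$1$ solution with an $\frkS$-module map $E\to\opEnd_M$ vanishing on $R$ for the converse --- and it is correct in outline; the one step you explicitly leave open (matching $[\alpha_1,\alpha_1]$ on $(\opB^{c!})^{(2)}=s^2R$ with the evaluation of the induced map on $R\subset\opF(E)^{(2)}$, with the suspension signs) is precisely the computational content of the cited proposition, so a self-contained proof would still need that verification carried out.
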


Thus, as explained in the introduction, we have
\begin{align*}
\{ \text{ $\opB$-algebra structures on $M$} \} 
\ \stackrel{\ \ 1:1 \ \ }{\longleftarrow \joinrel \longrightarrow} \ 
 \MC(\g_{\opB,M}) := 
 \{ \text{weight $1$ elements in $\Tw(\g_{\opB,M})$} \}.
\end{align*}

\subsection{Deformation theory}
\label{subsec:2:deform}


Let us briefly recall the deformation theory using dg Lie algebras
following \cite{GM}.
For a full account using operadic language, 
see \cite[\S12.2]{LV}.

For a dg Lie algebra $\g=(\g,[ \ ],d)$,
let us denote the space of solutions of the Maurer-Cartan equation 
with (weight) grading $1$ by $\MC(\g)$.
On this space the subspace $\g^0$ acts infinitesimally on $\MC(\g)$.
Namely the map
\[
 \g^0 \ni \lambda \longmapsto 
 d \lambda + [-,\lambda] \in \Gamma(\MC(\g),T\MC(\g))
\]
is a morphism of Lie algebras,
where the target is the Lie algebra of vector fields on $\MC(\g)$.
The exponentiated action can be written as 
\[
 e^{t \lambda} \alpha = e^{t \ad(\lambda)}(\alpha)
 + \dfrac{\id-e^{t \ad(\lambda)}}{\ad(\lambda)}(d \lambda).
\]
Let $G$ be the adjoint group of $\g^0$.
We denote by
\[
 \catMC(\g) := \MC(\g)/G
\]
the moduli space of Maurer-Cartan elements of $\g$,
which is considered as a coset, 
or a groupoid (called the \emph{Deligne groupoid}).

Let $\opB$ be a Koszul operad,
$M$ be an object of a pseudo-tensor category $\catM$.
Hereafter we write 
\[
 \g := \g^{\catM}_{\opB,M}
 = \bigl(\Hom_{\frkS}(\opB^{c!},\opEnd_M), [\,], \partial \bigr).
\]
A solution $\varphi \in \MC(\g^{\catM}_{\opB,M})$ 
gives a $\opB$-algebra structure on $M$.
Once $\varphi$ is chosen,
one can twist the dg Lie algebra $\g$ as

\begin{dfn}\label{dfn:twist-dgla}
For $\varphi \in \MC(\g)$,
the twisted dg Lie algebra $\g^\varphi$ is defined to be 
\[
 \g^\varphi := 
  \bigl(\Hom_{\frkS}(\opB^{c!},\opEnd_M),[\,],\partial_\varphi\bigr),
 \quad
 \partial_\varphi := \partial + [\varphi,-].
\]
\end{dfn}

Consider a local Artin $\bbK$-algebra $R$ with $\frkm$ the maximal ideal.
An \emph{$R$-deformation} of $\varphi$ is an $R$-linear 
$\opB \otimes_{\bbK} R$-algebra structure on $M \otimes_{\bbK} R$
which is equal to $\varphi$ modulo $\frkm$.
The \emph{equivalence} of $R$-deformations can be defined naturally,
and we denote it by $\sim$.
We define
\[
 \Def_{\varphi}(R) := \{\text{$R$-deformations of $\varphi$}\},\quad
 \catDef_{\varphi}(R) :=\Def_{\varphi}(R)/\sim.
\]
The latter one can be considered as a set or a groupoid.
The standard deformation theory says

\begin{fct*}[{\cite[Proposition 12.2.6]{LV}}]
We have a natural bijections (or equivalence of groupoids)
\[
 \Def_{\varphi}(R) \simeq \MC(\g^{\varphi} \otimes \frkm),\quad
 \catDef_{\varphi}(R) \simeq 
  \catMC(\g^{\varphi} \otimes \frkm).
\]
\end{fct*}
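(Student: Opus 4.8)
The plan is to deduce this from the twisting formalism of the convolution dg Lie algebra, in three steps: base change over $R$, twisting by $\varphi$, and matching the equivalence relation with the gauge action of the Deligne groupoid.

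First I would perform the base change. Since $R$ is a finite-dimensional local $\bbK$-algebra, the functor $(-)\otimes_{\bbK} R$ is exact, and the natural map $\opEnd^{\catM}_M \otimes_{\bbK} R \to \opEnd^{\catM\otimes R}_{M\otimes R}$ identifying $R$-multilinear operations on $M\otimes R$ with operations on $M$ having coefficients in $R$ is an isomorphism of dg operads over $R$. As the Koszul dual cooperad $\opB^{c!}$ is defined over $\bbK$, this induces a canonical isomorphism of dg Lie algebras over $R$,
\begin{equation*}
 \g^{\catM\otimes R}_{\opB\otimes R,\,M\otimes R} \;\cong\; \g \otimes_{\bbK} R,
\end{equation*}
compatible with the weight grading and with reduction modulo $\frkm$. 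Applying Fact \ref{fct:Koszul} over the base $R$, an $R$-linear $\opB\otimes R$-algebra structure on $M\otimes R$ is the same as a weight-$1$ Maurer--Cartan element of $\g\otimes R$; the condition of reducing to $\varphi$ modulo $\frkm$ says that this element lies over $\varphi$. Hence $\Def_{\varphi}(R)$ is identified with the set of weight-$1$ solutions $\Phi\in\g\otimes R$ of the Maurer--Cartan equation with $\Phi\equiv\varphi\pmod{\frkm}$.

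Next comes the twisting. Writing such a $\Phi$ uniquely as $\Phi=\varphi+\alpha$ with $\alpha\in\g\otimes\frkm$ (necessarily of weight $1$ and of the same cohomological degree as $\varphi$), the key computation is the twisting identity
\begin{equation*}
 \partial\Phi + \tfrac12[\Phi,\Phi] \;=\; \partial_{\varphi}\alpha + \tfrac12[\alpha,\alpha],
\end{equation*}
obtained by expanding the bracket and using that $\varphi$ is itself a Maurer--Cartan element ($\partial\varphi+\tfrac12[\varphi,\varphi]=0$), that $\varphi$ and $\alpha$ are of odd degree, and the definition $\partial_{\varphi}=\partial+[\varphi,-]$ from Definition \ref{dfn:twist-dgla}. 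Thus $\Phi$ solves the Maurer--Cartan equation of $\g\otimes R$ if and only if $\alpha$ solves that of the twisted algebra $\g^{\varphi}\otimes\frkm$; one checks that the weight-$1$ condition is respected on both sides, the two equations decomposing consistently into their weight-$1$ and weight-$2$ parts. This yields the first bijection $\Def_{\varphi}(R)\simeq\MC(\g^{\varphi}\otimes\frkm)$.

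Finally, for the groupoid statement I would match the equivalence $\sim$ with the gauge action of the adjoint group $G=\exp\bigl((\g^{\varphi})^{0}\otimes\frkm\bigr)$ described in \S\ref{subsec:2:deform}. An equivalence of $R$-deformations is an $R$-linear automorphism of $M\otimes R$ reducing to the identity modulo $\frkm$ and intertwining the two algebra structures; since $\frkm$ is nilpotent, such automorphisms are precisely the exponentials $\exp(\lambda)$ of degree-$0$ elements $\lambda\in(\g^{\varphi})^{0}\otimes\frkm$, and their effect on the associated Maurer--Cartan element is exactly the exponentiated formula for $e^{\lambda}\alpha$ recorded there. Passing to quotients, functorially in $R$, gives $\catDef_{\varphi}(R)\simeq\catMC(\g^{\varphi}\otimes\frkm)$ and hence the equivalence of groupoids. \textbf{The main obstacle} is this last step: verifying that algebra automorphisms of $M\otimes R$ reducing to the identity correspond, via logarithm, exactly to the degree-$0$ gauge transformations, and that the action is compatible on the nose (signs included) with the twisting identity of the previous step. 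Throughout, the nilpotence of $\frkm$ is what makes the exponentials, and the Deligne groupoid, well defined.
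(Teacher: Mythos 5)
The paper itself gives no proof of this statement—it is imported as a Fact with the bare citation \cite[Proposition 12.2.6]{LV}—and your three-step argument (base change along the finite-dimensional local ring $R$ to identify $\Def_{\varphi}(R)$ with weight-$1$ Maurer--Cartan elements of $\g\otimes R$ lying over $\varphi$, the twisting identity $\partial\Phi+\tfrac12[\Phi,\Phi]=\partial_{\varphi}\alpha+\tfrac12[\alpha,\alpha]$ for $\Phi=\varphi+\alpha$, and the logarithm/exponential matching of equivalences with the gauge action of $\exp\bigl((\g^{\varphi})^{0}\otimes\frkm\bigr)$, well defined by nilpotence of $\frkm$) is precisely the standard proof found in the cited reference and in \cite{GM}. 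Your sketch is correct as it stands, including the sign and weight bookkeeping, so it faithfully reconstructs the argument the paper delegates to \cite{LV}.
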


We also have the standard descriptions of rigidity and obstructions.
For $R=\bbK[[t]]$, the condition $H^1(\g^\varphi)=0$ implies 
that any $R$-deformation of $\varphi$ is trivial,
and the condition $H^2(\g^\varphi)=0$ implies that 
any infinitesimal deformation of $\g^\varphi$ extends to an $R$-deformation.




\section{Chiral dg Lie algebra}
\label{sect:chdgla}

We now apply the construction of convolution dg Lie algebras 
in the operad theory \cite{LV} explained in \S\ref{sect:conv} to 
the chiral or coisson operad explained in \S\ref{sect:CA}.

\subsection{Description of our dg Lie algebras}
\label{subsec:calc}

Let $X$ be a smooth curve and $\catM(X)$ be the category of right 
$\shD$-modules over $X$.
Consider the pseudo-tensor categories
$\catM(X)^{c}$ and $\catM(X)^{\ch}$.
Applying Definition \ref{dfn:g_BM} of $\g^{\catM}_{\opB,M}$ 
to the case $\opB=\opLie$ and $\catM=\catM^{\ch},\catM^{c}$ 
we have following two Lie algebras. 

\begin{dfn}
For an object $M \in \catM(X)$, set 
\begin{align*}
\g^{\ch}_{M} := \g^{\catM^{\ch}}_{\opLie,M}
  = \Hom_{\frkS}\bigl(\opLie^{c!},\opEnd^{\catM^{\ch}}_M\bigr),
\quad
\g^{c}_{M} := \g^{\catM^{c}}_{\opLie,M}
  = \Hom_{\frkS}\bigl(\opLie^{c!},\opEnd^{\catM^{c}}_M\bigr).
\end{align*}
We call them the \emph{chiral} and \emph{coisson Lie algebras}
of $M$ respectively.
\end{dfn}

By the definitions of chiral and coisson algebras 
and the discussion in 
\S\ref{subsec:mc-alg}--\S\ref{subsec:deform},
these objects control the structures 
of chiral algebra and coisson algebra respectively.

Recall Definition \ref{dfn:dgHom} of the differential $\partial$.
For  $\g^{\ch}_M$ and $g^{c}_M$, 
since $\opLie^{c!}$, $\opEnd^{\ch}_M$ and $\opEnd^{c}_M$ 
have null differentials,
we have $\partial=0$ on $\g^{\ch}_M$ and $\g^{c}_M$.
Hence we called them (graded but with trivial differential) Lie algebras.

Let us write down these Lie algebras explicitly.
First we recall an explicit description of 
$\opLie^{c!}$.

\begin{lem}
\label{lem:Liec!}
The graded $\frkS$-module structure of $\opLie^{c!}$ is given by 
\[
 \opLie^{c!}(n) \simeq s^{1-n} \sgn_n,
\]
where 
$\sgn_n$ is the sign representation of $\frkS_n$.
\end{lem}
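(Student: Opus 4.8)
The plan is to reduce the statement to the classical quadratic duality between $\opLie$ and $\opCom$ together with the one-dimensionality of $\opCom(n)$. First I would recall the quadratic presentation $\opLie = \opP(E,R)$, in which $E$ is the $\frkS$-module concentrated in arity $2$ with $E(2) \simeq \sgn_2$ (the antisymmetric bracket) and $R \subset \opF(E)^{(2)}$ is the $\frkS_3$-submodule generated by the Jacobi relator. By the defining formula $\opLie^{c!} = \opC(sE, s^2 R)$, the component $\opLie^{c!}(n)$ is the weight-$(n-1)$ part of the quadratic sub-cooperad $\opC(sE, s^2 R) \subset \opF^c(sE)$: each generator is binary, and a planar binary tree with $n$ leaves has exactly $n-1$ internal vertices, so arity $n$ corresponds to weight $n-1$.

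The next step is to identify the underlying (ungraded) $\frkS$-module. Arity by arity, the quadratic cooperad $\opC(sE, s^2 R)$ is dual to the quadratic dual operad of $\opLie$, which is the commutative operad $\opCom$ with $\opCom(n) \simeq \bbK$ the trivial $\frkS_n$-representation \cite{LV}. I would then invoke the standard arity-wise comparison between the Koszul dual cooperad and the quadratic dual operad: $\opLie^{c!}(n)$ is obtained from the linear dual of $\opCom(n)$ by a twist by the sign representation together with the operadic suspension recording the $n-1$ internal vertices. Since the linear dual of $\opCom(n)$ is one-dimensional with trivial action, this twist produces exactly $\sgn_n$, placed in the degree $s^{1-n}$ coming from the $n-1$ suspensions. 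This yields $\opLie^{c!}(n) \simeq s^{1-n}\sgn_n$.

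The main obstacle is the bookkeeping of signs and degrees rather than any conceptual difficulty. Two points require care: that the resulting representation is the sign representation $\sgn_n$ and not the trivial one, and that the precise homological shift is $s^{1-n}$. Both are governed by the Koszul sign rule applied through the $n-1$ suspensions $s$ sitting at the internal vertices of the trees; the antisymmetry $E(2) \simeq \sgn_2$ of the bracket, propagated through a tree together with these suspension signs, forces the sign representation in every arity, while the suspensions account for the degree. Alternatively, one may verify the statement directly by induction on $n$, showing that the corelations $s^2 R$ cut the cofree cooperad $\opF^c(sE)$ down to a one-dimensional, sign-isotypic subspace in each arity; the inductive step is exactly dual to the computation collapsing the Jacobi relations to $\opCom$, and carries the same combinatorics.
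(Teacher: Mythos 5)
Your proposal is correct and follows essentially the same route as the paper: both reduce the computation to the classical duality $\opLie^{!}\simeq\opCom$ and then recover $\opLie^{c!}(n)$ from the linear dual of the one-dimensional trivial module $\opCom(n)$ twisted by the operadic (de)suspension, which contributes exactly the factor $s^{1-n}\sgn_n$. The paper packages this twist as the Hadamard factor $\opcoEnd_{s^{-1}\bbK}(n)\simeq s^{1-n}\sgn_n$, which is precisely the sign-and-degree bookkeeping you describe.
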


\begin{proof}
%
The shortest argument is to use the fact $\opLie^! \simeq \opCom$.
For a quadratic operad $\opP=\opP(E,R)$,
its  Koszul dual operad $\opP^!$ is defined to be 
\[
 \opP^{!} := (\opcoEnd_{s \bbK} \otimes_H \opP^{c!})^*.
\]
Here $\opcoEnd_{V}$ denotes the cooperad of endomorphisms on $V$ 
with $\opcoEnd_V=\oplus_{n \ge0} \Hom_{\bbK}(V,V^{\otimes n})$.
The Hadamard tensor product $\otimes_H$ of graded vector spaces is 
defined by $(M \otimes_H N)_n := M_n \otimes N_n$.
Finally $*$ denotes the linear dual.
As a corollary, we have
\[
 \opP^{c!} \simeq \opcoEnd_{s^{-1}\bbK}\otimes_H (\opP^!)^*
\]
Now for $\opP=\opLie$, 
since $\opP^!(n)=\opCom(n)$ is the trivial $\frkS_n$-module and 
$\opcoEnd_{s^{-1}\bbK}(n) \simeq s^{1-n} \sgn_n$ 
as graded $\frkS$-module 
(recall Remark \ref{rmk:sign} that 
 we are considering sign-graded complexes),
we have the result.
\end{proof}

Next recall that the chiral operad $\opEnd^{\ch}_M$ is given by
\[
 \opEnd^{\ch}_M(n)=
 \Hom_{\catM(X^n)}\bigl(j_*j^* M^{\boxtimes n},\Delta_* M\bigr)
\]
where $j:=j^{(n)}: X \hookrightarrow X^n$ is the diagonal embedding
and $\Delta:=\Delta^{(n)}: U^{(n)} \hookrightarrow X^n$ 
is the complement of diagonal divisors.
Then by Lemma \ref{lem:Liec!} we have
\begin{align*}
 \g^{\ch}_{M}(n) 
&\simeq
 s^{1-n} \sgn_n \otimes_{\frkS_n}
 \Hom_{\catM(X^n)}\bigl(j_*j^* M^{\boxtimes n},\Delta_* M\bigr)
 \simeq 
 s^{1-n}  \Hom_{\catM(X^n)}\bigl(j_*j^* (\wedge^n M),
  \Delta_* (M) \bigr).
\end{align*}
Let us restate this formula as 

\begin{lem}
\label{lem:gchM:Smod}
The graded $\frkS$-module structure of $\g^{\ch}_{M}$ is given by 
\begin{align*}
 \g^{\ch}_{M}(n) \simeq s^{1-n} C^{\ch,n}(M),
\quad
 C^{\ch,n}(M):= \Hom_{\catM(X^n)}\bigl(j_*j^* (\wedge^n M),
  \Delta_* (M) \bigr).
\end{align*}
\end{lem}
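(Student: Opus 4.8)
The plan is to observe that the statement is a direct unwinding of Definition~\ref{dfn:g_BM} together with the computation of $\opLie^{c!}$ already recorded in Lemma~\ref{lem:Liec!}. Recall that $\g^{\ch}_M = \Hom_{\frkS}(\opLie^{c!},\opEnd^{\ch}_M)$, so that in each arity $\g^{\ch}_M(n) = \Hom_{\frkS_n}(\opLie^{c!}(n),\opEnd^{\ch}_M(n))$. First I would substitute $\opLie^{c!}(n)\simeq s^{1-n}\sgn_n$ from Lemma~\ref{lem:Liec!} together with the defining formula $\opEnd^{\ch}_M(n)=\Hom_{\catM(X^n)}(j_*j^*M^{\boxtimes n},\Delta_*M)$. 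This reduces the claim to identifying the $\frkS_n$-equivariant part against the sign representation, with the overall shift $s^{1-n}$ simply carried along.

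The heart of the argument is the identity $\sgn_n\otimes_{\frkS_n}\Hom_{\catM(X^n)}(j_*j^*M^{\boxtimes n},\Delta_*M)\simeq\Hom_{\catM(X^n)}(j_*j^*(\wedge^n M),\Delta_*M)$, where I read the left-hand side through the standard characteristic-$0$ identification of $\frkS_n$-equivariant homs from $\sgn_n$ with the sign-isotypic (coinvariant) part. The key functorial input is that the diagonal $X\hookrightarrow X^n$ and its complement $U^{(n)}$ are both stable under the $\frkS_n$-action permuting the factors of $X^n$; hence $j_*j^*$ is an exact, $\frkS_n$-equivariant endofunctor of $\catM(X^n)$. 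Since we work over a field of characteristic $0$ the antisymmetrizer is idempotent, so $j_*j^*$ commutes with extraction of the sign-isotypic component, giving $\sgn_n\otimes_{\frkS_n}(j_*j^*M^{\boxtimes n})\simeq j_*j^*(\sgn_n\otimes_{\frkS_n}M^{\boxtimes n})=j_*j^*(\wedge^n M)$. Tensoring the Hom-space with $\sgn_n$ over $\frkS_n$ thus replaces the source $M^{\boxtimes n}$ by its exterior power, and collecting the shift yields $\g^{\ch}_M(n)\simeq s^{1-n}C^{\ch,n}(M)$.

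The step I expect to require the most care is the sign bookkeeping dictated by Remark~\ref{rmk:sign}: because the ambient symmetric monoidal structure on complexes of $\shD$-modules is sign-graded, the permutation action on $M^{\boxtimes n}$ already carries Koszul signs, and the $\frkS_n$-action on the target $\Delta_*M$ comes from permuting the normal directions of the diagonal (acting a priori through the determinant of the normal bundle). One must verify that these signs, together with the sign character supplied by $\opLie^{c!}(n)\simeq s^{1-n}\sgn_n$, combine so as to leave the target as the plain $\Delta_*M$ while converting the source into the genuine exterior power $\wedge^n M$, with the residual shift exactly $s^{1-n}$ and no spurious twist. Once the convention for $\wedge^n M$ as the sign-isotypic summand of $M^{\boxtimes n}$ on $X^n$ is fixed to match Remark~\ref{rmk:sign}, the remaining verification is just the naturality of $j_*j^*$ and $\Delta_*$ for the $\frkS_n$-action, and the lemma follows from the displayed computation.
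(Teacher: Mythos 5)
Your proposal is correct and follows essentially the same route as the paper, which simply unwinds Definition~\ref{dfn:g_BM}, substitutes $\opLie^{c!}(n)\simeq s^{1-n}\sgn_n$ from Lemma~\ref{lem:Liec!}, and absorbs the sign representation into the source to produce $j_*j^*(\wedge^n M)$; the paper presents the lemma as a restatement of exactly this two-line computation. Your added care about the $\frkS_n$-equivariance of $j_*j^*$ and the sign conventions of Remark~\ref{rmk:sign} fills in details the paper leaves implicit, but it is the same argument.
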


\begin{rmk}\label{rmk:CE}
$\g^{\ch}_{M}$ looks quite similar to the Chevalley-Eilenberg complex 
$C^\bullet(L,L) = \Hom(\wedge^\bullet L,L)$ of a Lie algebra $L$ over $\bbK$.
\end{rmk}

Next we study the pre-Lie structure $\star$ on $\g^{\ch}_{M}(n)$.
As expected from the above remark, 
the result has the same form as the Nijenhuis-Richardson product 
\cite[\S13.2.9]{LV}
on the Chevalley-Eilenberg complex.

For an operad $(\opP,\gamma,\eta)$, we denote by 
$\circ_i$ the $i$-th partial composition.
It is given by
\begin{align}\label{eq:circ_i}
 \mu \circ_i \nu := \gamma(\mu;\id,\ldots,\id,\nu,\id,\ldots,\id)
\end{align}
with $\nu$ sitting at the $i$-th position 
and $\id:=\eta(1) \in \opP(1)$ as before.
For $\mu \in \opP(m)$ and $\nu \in \opP(n)$,
$\mu \circ_i \nu$ is defined for $1 \le i \le m$ and 
$\mu \circ_i \nu \in \opP(m+n-1)$.

Let us also recall that a \emph{$(p,q)$-shuffle} is a permutation 
\[
 \sigma=
 \begin{pmatrix}
 1   & \cdots & p   & p+1 & \cdots & p+q \\
 i_1 & \cdots & i_p & j_1 & \cdots & j_q
 \end{pmatrix}
 \in \frkS_{p+q}
\]
such that $i_1<\cdots<i_p$ and $j_1<\cdots<j_q$.
The inverse of a $(p,q)$-shuffle is called \emph{$(p,q)$-unshuffle}.
Denote by $\frkS^{-1}_{p,q} \subset \frkS_{p+q}$ 
the subset of $(p,q)$-unshuffles.

\begin{lem*} 
For $f \in C^{\ch,p}(M)$ and $g \in C^{\ch,q}(M)$, we have 
\begin{align}\label{eq:NR}
 f \star g = 
 \sum_{\sigma \in \frkS^{-1}_{p,q}}
 \sgn(\sigma) (-1)^{(p-1)(q-1)} (f \circ_1 g)^\sigma.
\end{align}
Here $\circ_1$ is given by \eqref{eq:circ_i} 
with $\gamma=\gamma^{\ch}$ the composition on 
the chiral operad $\opEnd^{\ch}_M$.
\end{lem*}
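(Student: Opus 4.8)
The plan is to unwind the defining formula \eqref{eq:star} for the convolution product $\star$, specialized to $\opC=\opLie^{c!}$ and $\opP=\opEnd^{\ch}_M$, and then transport the result through the identification $\g^{\ch}_{M}(n)\simeq s^{1-n}C^{\ch,n}(M)$ of Lemma \ref{lem:gchM:Smod}. The crucial observation is that \eqref{eq:star} depends on the cooperad only through its (infinitesimal) decomposition map and on the operad only through its composition $\gamma=\gamma^{\ch}$; in particular the combinatorics is insensitive to the chiral nature of $\opEnd^{\ch}_M$ and is formally the same computation that produces the Nijenhuis--Richardson product on the Chevalley--Eilenberg complex in \cite[\S13.2.9]{LV}. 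So I would first reduce the claim to an explicit description of the infinitesimal decomposition map of $\opLie^{c!}$, and then feed it into \eqref{eq:star}.

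The key computational input is the infinitesimal decomposition map of $\opLie^{c!}$. Since each $\opLie^{c!}(n)\simeq s^{1-n}\sgn_n$ is one--dimensional by Lemma \ref{lem:Liec!}, I would fix a generator $e_n$ of $\opLie^{c!}(n)$ and compute, from the quadratic--cooperad description recalled in \S\ref{sss:Koszul-dual}, that
\[
 \Delta_{(1)}(e_{p+q-1})
 = \sum_{\sigma} \epsilon_\sigma\,
   \bigl(e_p;\id,\ldots,\id,e_q,\id,\ldots,\id\bigr)^{\sigma},
\]
where the inner generator $e_q$ occupies one slot, the remaining slots are filled by $\id$, and $\sigma$ ranges over the unshuffles that redistribute the $p+q-1$ inputs between $e_q$ and the remaining arguments of $e_p$ — exactly the permutations indexing the sum in \eqref{eq:NR}. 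The attached sign $\epsilon_\sigma$ is dictated by the sign representation carried by $\opLie^{c!}$ together with the Koszul rule for the shift degrees (Remark \ref{rmk:sign}).

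Feeding this into \eqref{eq:star} is then mechanical. The counit factors $\ve(\nu_j)$ for $j\neq i$ vanish unless the corresponding slot has arity $1$, which is precisely the passage from the full decomposition $\Delta$ to its infinitesimal part $\Delta_{(1)}$; hence $(f\star g)(e_n)$ collapses to a sum, over the slot $i$ and the unshuffles, of terms $\gamma^{\ch}\bigl(f(e_p);\id,\ldots,\id,g(e_q),\id,\ldots,\id\bigr)$, i.e.\ of partial compositions $f(e_p)\circ_i g(e_q)$ in $\opEnd^{\ch}_M$. Invoking the $\frkS$--equivariance of $f$ and $g$ and the fact that under $\opLie^{c!}(n)\simeq s^{1-n}\sgn_n$ the $\frkS_n$--action is the signed one, I can move every insertion to the first slot and pull every permutation outside, rewriting each term as $(f\circ_1 g)^{\sigma}$ with $\circ_1$ as in \eqref{eq:circ_i}. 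Reassembling the accumulated signs then yields the coefficient $\sgn(\sigma)(-1)^{(p-1)(q-1)}$, which is the asserted formula \eqref{eq:NR}.

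The main obstacle is the sign bookkeeping. Three sources must be reconciled into the single factor $\sgn(\sigma)(-1)^{(p-1)(q-1)}$: the sign representation on $\opLie^{c!}$, the Koszul signs produced by the shifts $s^{1-n}$ (so that $f\in C^{\ch,p}(M)$ and $g\in C^{\ch,q}(M)$ acquire internal degrees governed by $p-1$ and $q-1$), and the signs from reordering inputs via $\frkS$--equivariance. The factor $(-1)^{(p-1)(q-1)}$ in particular arises from commuting the degree--$(q-1)$ map $g$ past the $p-1$ trivial slots when forming the infinitesimal composition. I would pin down the overall normalization on a small case — e.g.\ $p=q=1$ together with the arity--two situation tied to the residue map and the Cohen isomorphism of Fact \ref{fct:Cohen} — and then propagate it, exactly as in the Chevalley--Eilenberg computation of \cite[\S13.2.9]{LV}.
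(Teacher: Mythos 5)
Your proposal is correct and follows essentially the same route as the paper: reduce \eqref{eq:star} to the infinitesimal decomposition $\Delta_{(1)}$ of the one-dimensional spaces $\opLie^{c!}(n)$ (the paper's \eqref{eq:Delta1:Lie}, with the sign $(-1)^{(p-1)(q-1)}$ and the sum over unshuffles built into the decomposition of the generator $\ell_n$), then feed the result into the composition $\gamma^{\ch}$ and use $\frkS$-equivariance to normalize the insertion to the first slot. The only difference is presentational: the paper simply asserts the formula for $\Delta_{(1)}(\ell_n)$, whereas you sketch how you would derive it from the quadratic-cooperad description and check the normalization on small cases, which fills in the same computational core rather than replacing it.
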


\begin{proof}
Let us denote by $\widetilde{f} \in \g^{\ch}_M$ 
the element corresponding to $f$ 
under the isomorphism in Lemma \ref{lem:gchM:Smod},
and similarly by $\widetilde{g}$ the one corresponding to $g$.
Recall the expression \eqref{eq:star} of $\star$.
For $\mu \in \opLie^{c!}$ with $\Delta(\mu)$ given by \eqref{eq:decomp} 
we have
\begin{equation}\label{eq:star:Lie}
 (\widetilde{f} \star \widetilde{g})(\mu)
=\sum\sum_i 
 \gamma^{\ch}\bigl(f;\ve(\nu_1),\ldots,\ve(\nu_{i-1}),g(\nu_i),
               \ve(\nu_{i+1}),\ldots,\ve(\nu_n)\bigr).
\end{equation}
Note that $\Delta$ means the decomposition in the cooperad $\opLie^{c!}$,
and $\gamma^{\ch}$ is the composition in the operad $\opEnd^{\ch}_M$.
By the above expression, 
it is enough to consider the infinitesimal decomposition 
$\Delta_{(1)}$ (see Definition \ref{dfn:inf-comp:coop}).
By Lemma \ref{lem:Liec!} $\dim \opLie^{c!}(n)=1$ and 
denote the basis by $\ell_n$.
Also by the same lemma $\Delta$ is induced by 
the decompositions on $\opcoEnd$ and $\opCom^*$.
Now one can find 
\begin{equation}\label{eq:Delta1:Lie}
 \Delta_{(1)}(\ell_n)=
 \sum_{\substack{p+q=n+1 \\ p,q>1}}
 \sum_{\sigma \in \frkS_{p,q}^{-1}}
 (-1)^{(p-1)(q-1)}(\ell_p;\ell_q,\id,\ldots,\id)^\sigma
\end{equation}
Here $\id=\eta(1) \in \opLie^{c!}(1)$ is the image of $1 \in \bbK$ 
under the coaugmentation $\eta$ of $\opLie^{c!}$.
Going back to $\star$, 
we note that $\widetilde{f}$ can be seen as the map $\ell_p \mapsto f$.
Then \eqref{eq:star:Lie} and \eqref{eq:Delta1:Lie} give the result.
\end{proof}

We summarize the argument so far in 

\begin{prop}
\label{prop:chiralLA}
The chiral Lie algebra $\g^{\ch}_{M}$ is described as 
\begin{align*}
 \g^{\ch}_{M}(n) \simeq s^{1-n} C^{\ch,n}(M),\quad
 C^{\ch,n}(M) := 
  \Hom_{\catM(X^n)}\bigl(j_* j^*(\wedge^n M), \Delta_* (M) \bigr).
\end{align*}
The Lie bracket is given by $[f,g]=f \star g - g \star f$,
where the pre-Lie product $\star$ is \eqref{eq:NR}
with $\circ_1$ corresponding to 
the composition map $\gamma=\gamma^{\ch}$   
of the chiral operad $\opEnd^{\ch}_M$.
\end{prop}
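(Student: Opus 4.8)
The plan is to observe that this proposition merely collects the results established above, so the proof will be a matter of assembly rather than new computation. The underlying graded $\frkS$-module description $\g^{\ch}_M(n) \simeq s^{1-n} C^{\ch,n}(M)$ is precisely Lemma \ref{lem:gchM:Smod}, which in turn rests on the computation $\opLie^{c!}(n) \simeq s^{1-n}\sgn_n$ of Lemma \ref{lem:Liec!} fed into the definition $\g^{\ch}_M = \Hom_{\frkS}(\opLie^{c!},\opEnd^{\ch}_M)$. First I would simply cite this for the first displayed formula.

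Next I would record that the differential is trivial. Since both $\opLie^{c!}$ and $\opEnd^{\ch}_M$ carry null differentials, Definition \ref{dfn:dgHom} forces $\partial=0$, as already noted before Lemma \ref{lem:Liec!}; hence $\g^{\ch}_M$ is a graded Lie algebra with zero differential. Its bracket is the one furnished by the convolution structure, that is, the anti-symmetrization $[f,g]=f\star g-g\star f$ of the pre-Lie product $\star$ (Proposition \ref{prop:preLie}). So it remains only to identify $\star$ explicitly under the isomorphism of Lemma \ref{lem:gchM:Smod}.

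That last identification is supplied verbatim by the preceding Lemma, whose proof expands the defining formula \eqref{eq:star} for $\star$ against the infinitesimal decomposition $\Delta_{(1)}(\ell_n)$ computed in \eqref{eq:Delta1:Lie}, yielding the Nijenhuis--Richardson shape \eqref{eq:NR}. I would therefore invoke that Lemma to conclude. The only place needing genuine care is the bookkeeping of signs and $(p,q)$-unshuffles in \eqref{eq:NR}, but this has already been settled in the proof of the preceding Lemma, so no real obstacle remains; the proposition follows by combining Lemma \ref{lem:gchM:Smod}, the vanishing $\partial=0$, and the preceding Lemma.
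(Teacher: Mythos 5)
Your proposal is correct and matches the paper exactly: the paper introduces this proposition with the phrase ``We summarize the argument so far in,'' so its proof is precisely the assembly you describe — Lemma \ref{lem:gchM:Smod} for the graded $\frkS$-module structure, the vanishing of $\partial$ noted earlier, and the preceding lemma establishing \eqref{eq:NR} via \eqref{eq:star:Lie} and \eqref{eq:Delta1:Lie}. No further argument is needed.
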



By the same argument we have

\begin{prop}
\label{prop:coissonLA}
The coisson Lie algebra $\g^c_{M}$ is described as 
\begin{align*}
 \g^{c}_{M}(n) \simeq s^{1-n} C^{c,n}(M),\quad
 C^{c,n}(M) := 
 \bigoplus_{S \in Q([n])}
 \Hom_{\catM(X^S)}\bigl(
  \wedge_{s \in S} \bigl( M^{\otimes^! [n]_s}\bigr), 
  \Delta^{(S)}_* (M) \bigr) \otimes (\otimes_{s \in S} \opLie_{[n]_s}),
\end{align*}
where $[n]=\{1,\ldots,n\}$ and 
$[n]_s := \{x \in [n] \mid \pi(x)=s \}$ 
for $\pi:[n] \twoheadrightarrow S$ and $s \in S$.
The Lie bracket is similarly described as in Proposition \ref{prop:chiralLA} 
where we replace $\gamma^{\ch}$ with the composition $\gamma^c$ of 
the cooperad $\g^{c}_{M}$.
\end{prop}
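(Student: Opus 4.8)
The plan is to follow verbatim the proof of Proposition~\ref{prop:chiralLA}, replacing the chiral operad $\opEnd^{\ch}_M$ by the coisson operad $\opEnd^{c}_M$ throughout. As there, the argument splits into two essentially independent parts: first the identification of the graded $\frkS$-module underlying $\g^{c}_M$, and then the computation of the pre-Lie product $\star$.

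For the $\frkS$-module structure I would start, as in the proof of Lemma~\ref{lem:gchM:Smod}, from Lemma~\ref{lem:Liec!}, which gives $\opLie^{c!}(n)\simeq s^{1-n}\sgn_n$. Since $\sgn_n$ is one-dimensional, this reduces $\g^{c}_M(n)=\Hom_{\frkS_n}(\opLie^{c!}(n),\opEnd^{c}_M(n))$ to the shift by $s^{1-n}$ of the sign-isotypic component (equivalently, in characteristic $0$, the sign-coinvariants) of $\opEnd^{c}_M(n)$. I would then feed in the decomposition $\opEnd^{c}_M(n)=\bigoplus_{S\in Q([n])}\opEnd^{c}_M(n)_S$ from \eqref{eq:MXc} and track the $\frkS_n$-action. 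This action permutes the summands indexed by partitions of a common type and, on a fixed $S$, acts through its stabilizer, which both permutes the blocks $[n]_s$ of equal cardinality and permutes the elements inside each block. The block-permutation part, twisted by the sign and carrying the Koszul signs of Remark~\ref{rmk:sign}, antisymmetrizes the outer product $\boxtimes_{s\in S}$ into the wedge $\wedge_{s\in S}$; the within-block part acts trivially on $M^{\otimes^![n]_s}$ (the tensor structure $\otimes^!$ being symmetric) and combines with the sign precisely so as to leave the factor $\otimes_{s\in S}\opLie_{[n]_s}$ intact. This produces the asserted description of $C^{c,n}(M)$.

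For the Lie bracket I would reuse the computation carried out in the proof of the Nijenhuis--Richardson formula \eqref{eq:NR}. The key point is that the explicit form \eqref{eq:star} of $\star$ and the infinitesimal decomposition \eqref{eq:Delta1:Lie} of $\opLie^{c!}$ depend only on the source cooperad $\opLie^{c!}$, not on the target operad. Hence the same unshuffle-and-sign combinatorics go through unchanged once the chiral composition $\gamma^{\ch}$ is replaced by the coisson composition $\gamma^{c}$, yielding the stated bracket.

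The main obstacle is the sign and equivariance bookkeeping in the middle step: one must check that passing to sign-coinvariants distributes correctly as (a) an antisymmetrization across the blocks of $S$ producing $\wedge_{s\in S}$ and (b) the surviving inner factor $\otimes_{s\in S}\opLie_{[n]_s}$, with all Koszul signs of Remark~\ref{rmk:sign} consistent, and that the sum over $Q([n])$ in the conclusion is the correct accounting of the $\frkS_n$-orbits of partitions, each orbit contributing through the stabilizer-sign-isotypic part of a single representative. Once this is settled, the bracket computation is purely formal, exactly ``by the same argument'' as in the chiral case.
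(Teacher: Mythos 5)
Your proposal is correct and follows exactly the route the paper takes: the paper's entire proof of Proposition \ref{prop:coissonLA} is the phrase ``by the same argument,'' referring back to Proposition \ref{prop:chiralLA}, and your two steps --- identifying $\g^c_M(n)$ as the $s^{1-n}$-shift of the sign-isotypic part of $\opEnd^c_M(n)$ via Lemma \ref{lem:Liec!}, and observing that the Nijenhuis--Richardson computation \eqref{eq:NR} depends only on the source cooperad $\opLie^{c!}$ so that $\gamma^{\ch}$ may simply be replaced by $\gamma^c$ --- are precisely that argument spelled out. If anything, your account of the equivariance bookkeeping over the decomposition indexed by $Q([n])$ is more explicit than what the paper provides.
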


%

Next assume that we are given 
$\mu \in \MC(\g^{\ch}_{M})$  or $\MC(\g^{c}_{M})$ 
\and consider the corresponding twisted dg Lie algebra 
(see Definition \ref{dfn:twist-dgla}).

\begin{dfn}
The twisted dg Lie algebra 
\[
 \g^{\ch,\mu}_{M} := 
 \bigl(\Hom_{\frkS}(\opLie^{c!},\opEnd^{\ch}_M), 
  [\, ], \partial_\mu \bigr),
 \quad
 \partial_\mu := \partial + [\mu,-] = [\mu,-].
\]
is called the \emph{chiral dg Lie algebra}.
Similarly,
\[
 \g^{c,\mu}_{M} := 
 \bigl(\Hom_{\frkS}(\opLie^{c!},\opEnd^{c}_M), 
  [\, ], \partial_\mu \bigr)
\]
is called the \emph{coisson dg Lie algebra}.
\end{dfn}

Recall that $\mu \in \MC(\g^{\ch}_{M}) \subset \g^{\ch}_M(2)$ is 
a binary operation.
Then from the description of $\g^{\ch}_M$, we find 

\begin{prop}
\label{prop:chDGLA:d}
The differential $\partial_\mu$ of $\g^{\ch,\mu}_M$ is described by 
\begin{align*}
 \partial_{\mu}(f)(x_1 \wedge \cdots \wedge x_{n+1}) = 
 &\sum_{i=0}^{n} (-1)^i 
  \mu\bigl(x_i,f(x_0 \wedge \cdots \wh{x}_i \cdots \wedge x_n)\bigr) \\
 &+\sum_{0\le i<j \le n} (-1)^{i+j-1} 
  f\bigl(\mu(x_i,x_j) \wedge x_0 \wedge \cdots \wh{x}_i \cdots \wh{x}_j
    \cdots \wedge x_n\bigr)
\end{align*}
for $f \in C^{\ch,n}(M) \simeq s^{n-1}\g^{\ch}_M(n)$
and $x_0 \wedge \cdots \wedge x_n \in j_* j^*(\wedge^{n+1} M)$.
$\wh{x}_i$ denotes skipping the term $x_i$.
The same expression holds for $g^{c,\mu}_M$ with $\mu \in \MC(\g^{c}_M)$.
\end{prop}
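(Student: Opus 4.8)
The plan is to compute the twisted differential $\partial_\mu = [\mu,-]$ directly from the Lie bracket of Proposition \ref{prop:chiralLA} and to match the outcome with the two Chevalley--Eilenberg-type sums in the statement. Since $\partial_\mu$ is the graded commutator with $\mu$, for $f \in C^{\ch,n}(M)$ I would write
\[
 \partial_\mu(f) = [\mu,f] = \mu \star f - (-1)^{|\mu|\,|f|}\, f \star \mu,
\]
where $\star$ is the pre-Lie product \eqref{eq:NR} and the degrees are read off from the suspension $\g^{\ch}_{M}(n) \simeq s^{1-n} C^{\ch,n}(M)$ of Lemma \ref{lem:gchM:Smod}. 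The whole proof then reduces to expanding the two terms $\mu \star f$ and $f \star \mu$ by the Nijenhuis--Richardson formula \eqref{eq:NR} and evaluating on a decomposable section $x_0 \wedge \cdots \wedge x_n$ of $j_* j^*(\wedge^{n+1} M)$. Note that $\mu \star f$ composes $f$ (arity $n$) into a slot of $\mu$ (arity $2$) and $f \star \mu$ composes $\mu$ into a slot of $f$, both producing an arity $n+1$ operation, consistent with $\partial_\mu(f) \in C^{\ch,n+1}(M)$.

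For the term $\mu \star f$ I would apply \eqref{eq:NR} with $\mu$ outermost; here $f$ is plugged into one argument of $\mu$ while the unshuffle sum distributes the remaining single variable $x_i$ over the $n+1$ positions. Since $\mu$ lies in $C^{\ch,2}(M) = \Hom_{\catM(X^2)}(j_* j^*(\wedge^2 M), \Delta_* M)$ it is already skew in its two arguments, so each term $\mu(f(\dots \wh{x}_i \dots), x_i)$ is rewritten as $\pm\,\mu(x_i, f(\dots \wh{x}_i \dots))$, and the shuffle signs collapse to $(-1)^i$, yielding the first sum. For $f \star \mu$ the unshuffle sum instead selects an unordered pair $\{x_i,x_j\}$ to feed into $\mu$, and the signs assemble into $(-1)^{i+j-1}$, giving the second sum. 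All of the $\shD$-module content is packaged inside the operad composition $\gamma^{\ch}$ of $\opEnd^{\ch}_M$ and plays no role in the combinatorics, so the computation is formally the one identifying the Nijenhuis--Richardson differential on $\Hom(\wedge^\bullet L, L)$ with the Chevalley--Eilenberg differential, as anticipated in Remark \ref{rmk:CE} and \cite[\S13.2.9]{LV}.

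The main obstacle is the sign bookkeeping, since three sources of signs must be reconciled: the suspension factors $s^{1-n}$ relating $\g^{\ch}_{M}(n)$ to $C^{\ch,n}(M)$, the Koszul sign $(-1)^{|\mu|\,|f|}$ in the graded commutator, and the factors $\sgn(\sigma)(-1)^{(p-1)(q-1)}$ in \eqref{eq:NR}. I would control these by fixing once and for all the identification $f \leftrightarrow \widetilde{f}$ of Lemma \ref{lem:gchM:Smod} and propagating the signs through the infinitesimal decomposition \eqref{eq:Delta1:Lie} of $\ell_{n+1}$, exactly as in the computation establishing \eqref{eq:NR}; the skew-symmetry of $\mu$ is then the only additional input needed to put the first sum into the stated form.

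Finally, the coisson case needs no separate argument. Proposition \ref{prop:coissonLA} gives the identical Nijenhuis--Richardson description of the bracket on $\g^{c}_{M}$, with $\gamma^{\ch}$ replaced by the composition $\gamma^{c}$, so substituting $\gamma^{c}$ for $\gamma^{\ch}$ throughout the above computation produces verbatim the same formula for $\partial_\mu$ on $\g^{c,\mu}_M$.
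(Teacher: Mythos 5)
Your proposal is correct and is essentially the computation the paper itself intends: the paper gives no written proof, stating only that the formula follows ``from the description of $\g^{\ch}_M$,'' i.e.\ by expanding $\partial_\mu(f)=[\mu,f]=\mu\star f \mp f\star\mu$ via the Nijenhuis--Richardson formula \eqref{eq:NR} and the skew-symmetry of $\mu$, exactly as you outline. Your version simply makes explicit the sign bookkeeping (suspension, Koszul sign, unshuffle signs) that the paper leaves to the reader.
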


\begin{rmk*}
Continuing Remark \ref{rmk:CE},
the differential obtained coincides with that of 
the Chevalley-Eilenberg complex $C^\bullet(L,L)$.
Indeed, the convolution dg Lie algebra 
$(\Hom_{\frkS}(\opLie^{c!},\opEnd_L),[\,],\partial_\mu)$ 
with $\opEnd_L = \oplus_{n\ge1}\Hom_{\bbK}(L^{\otimes n},L)$
and $\mu$ the given Lie bracket on $L$
is nothing but $C^\bullet(L,L)$ up to shift,
as explained in \cite[\S13.2.7]{LV}.
\end{rmk*}

Let us close this section by translating our dg Lie algebra 
in the language of vertex algebras.
Recall Fact \ref{fct:VA=CA} of the correspondence between 
vertex and chiral algebras.
Let $\mu \in \MC(\g^{\ch}_{\shV^r})$ be the element 
corresponding to a quasi-conformal vertex algebra $(V,T,\ket{0},Y)$,
where $\shV^r$ is the right $\shD_X$-module attached to $V$.
Then the differential $\partial_\mu$ in Proposition \ref{prop:chDGLA:d}
reads 
\begin{equation}
\label{eq:DSK}
\begin{split}
 \partial_\mu f(a_0,\ldots,a_n)
 =&\sum_{r=0}^{n-1} (-1)^r Y(a_r,z) f(a_0,\ldots,\wh{a}_r,\ldots,a_n) \\
  &+\sum_{0\le r < s \le n-1}
  (-1)^{n+r+s} f\bigl(a_0,\ldots,\wh{a}_r,\ldots,\wh{a}_s,
  \ldots,a_n, Y(a_r,z) a_{s}\bigr)
\end{split}
\end{equation}
with $a_i \in \shV^r$ and $f \in C^{ch,n}(\shV^r)$ .

In the coisson dg Lie algebra for a coisson algebra structure 
$\mu \in \in \MC(\g^{\ch}_{\shV^r})$ corresponding to a vertex Poisson algebra 
$(V,Y_+,Y_-)$,
we have a similar formula as \eqref{eq:DSK} 
replacing $Y$ with $Y_{-}$.

\begin{rmk*}
\begin{enumerate}
\item 
One can apply the construction of convolution dg Lie algebra to 
the $*$-pseudo tensor structure on $\catM(X)$
(see Definition \ref{dfn:*PRS}).
Namely, we replace $\opEnd^{ch}_M$ by 
the operad $\opEnd^{*}_M := \oplus_{n} P^*_n(M)$.
As explained in \cite[Chap.\ 19]{FBZ} and \cite{BD},
the corresponding algebra structure , called $\opLie^*$-algebra,
is equivalent to a vertex Lie algebra.
Thus the resulting dg Lie algebra $\g^{*,\mu}_M$ controls 
deformations of a vertex Lie algebra structure corresponding to $\mu$.
Lie conformal algebra cohomology \cite{DSK} 
by De Sole and Kac is a cohomology theory of vertex Lie algebras.
and from the expressions of our construction 
(or, simply from the similarity to the Chevalley-Eilenberg complex),
$\g^{*,\mu}_M$ coincides with ours.

\item
Tamarkin+'s dg Lie algebra for chiral algebras \cite{T}
seems to be almost equivalent to ours.
However it considers a pro-finite limit of pseudo-tensor structure.
At present we don't know the role of the pro-finite limit.

\item
Yi-Zhi Huang introduced in \cite{H1, H2} a cohomology theory 
for graded vertex algebras,
where a certain condition on convergence is required to 
the coefficients in the cohomology complex.
Except for this convergence problem,
his construction seems to coincide with ours.
Since our construction requires quasi-conformal property to 
vertex algebras, Huang's construction is not covered by ours.
\end{enumerate}
\end{rmk*}

\subsection{Deformation problem}
\label{subsec:deform}

Let us recall the sequence \eqref{eq:operad_seq} of operads.
On the binary part it yields
\begin{align}\label{eq:operad_seq:rep}
 \opEnd^{\ch}_M(2) \longtwoheadrightarrow
 \gr\opEnd^{\ch}_M(2) \longhookrightarrow 
 \opEnd^{c}_M(2).
\end{align}
The construction of convolution dg Lie algebra immediately implies

\begin{prop}
The above sequence induces the following morphism of dg Lie algebras 
preserving weight gradings.
\[
  \wh{\psi}: \g^{\ch}_M \longto \g^{c}_M.
\]
\end{prop}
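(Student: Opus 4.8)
The plan is to deduce this entirely from the functoriality of the convolution construction $\Hom_{\frkS}(\opC,-)$ in its operad argument, which is precisely the content of the phrase ``the construction of convolution dg Lie algebra immediately implies''. Write $\Phi\colon \opEnd^{\ch}_M \to \opEnd^{c}_M$ for the operad morphism obtained as the composite of \eqref{eq:operad_seq} (its binary part being \eqref{eq:operad_seq:rep}). I claim the required map is post-composition with $\Phi$,
\[
 \wh{\psi}\colon \g^{\ch}_M = \Hom_{\frkS}(\opLie^{c!},\opEnd^{\ch}_M)
 \longto \Hom_{\frkS}(\opLie^{c!},\opEnd^{c}_M) = \g^{c}_M,
 \qquad f \longmapsto \Phi \circ f.
\]
This is well defined, since $\Phi$ is a morphism of $\frkS$-modules and hence $\Phi\circ f$ is $\frkS$-equivariant whenever $f$ is.

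First I would check compatibility with the pre-Lie products $\star$ defining the two brackets. Recall that $f \star g = \gamma_{(1)}^{\ch} \circ (f \ccone g) \circ \Delta_{(1)}$, where $\Delta_{(1)}$ is the infinitesimal decomposition of $\opLie^{c!}$ (Definition \ref{dfn:inf-comp:coop}) and $\gamma_{(1)}^{\ch}$ the infinitesimal composition of $\opEnd^{\ch}_M$ (Definition \ref{dfn:inf-comp:op}). By the explicit formula of Definition \ref{dfn:inf-comp:mor} one has $(\Phi\circ f) \ccone (\Phi \circ g) = (\Phi \ccone \Phi)\circ(f \ccone g)$, and since $\Phi$ is compatible with the composition maps $\gamma$ it is compatible with their infinitesimal restrictions, that is $\Phi \circ \gamma_{(1)}^{\ch} = \gamma_{(1)}^{c} \circ (\Phi \ccone \Phi)$. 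Combining these identities,
\[
 \wh{\psi}(f \star g)
 = \Phi \circ \gamma_{(1)}^{\ch} \circ (f \ccone g) \circ \Delta_{(1)}
 = \gamma_{(1)}^{c} \circ \bigl((\Phi\circ f) \ccone (\Phi \circ g)\bigr)
   \circ \Delta_{(1)}
 = \wh{\psi}(f) \star \wh{\psi}(g),
\]
so $\wh{\psi}$ is a morphism of pre-Lie algebras. Antisymmetrizing (Proposition \ref{prop:preLie}) gives $\wh{\psi}[f,g]=[\wh{\psi}(f),\wh{\psi}(g)]$, whence $\wh{\psi}$ is a morphism of graded Lie algebras.

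The differential condition is then automatic: as noted right after the definition of the chiral and coisson Lie algebras, $\opLie^{c!}$ and both operads $\opEnd^{\ch}_M$, $\opEnd^{c}_M$ carry null differentials, so $\partial=0$ on both sides and there is nothing to verify. Preservation of the weight grading is equally formal: the decomposition $\g=\oplus_d \g^{(d)}$ is induced solely by the weight grading of $\opLie^{c!}$ (Definition \ref{dfn:g_BM}), which $\wh{\psi}$ leaves untouched, so $f$ supported on $(\opLie^{c!})^{(d)}$ maps to $\Phi\circ f$ supported on the same weight.

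I expect the only genuinely substantive input, and hence the point deserving the most care, to be that the composite in \eqref{eq:operad_seq} really is a morphism of operads: specifically that the symbol map $\opEnd^{\ch}_M \twoheadrightarrow \gr_{W}\opEnd^{\ch}_M = \opEnd^{\cl}_M$ onto the associated graded of the special filtration is compatible with $\gamma^{\ch}$, and that the inclusion $\opEnd^{\cl}_M \hookrightarrow \opEnd^{c}_M$ of \eqref{eq:Pcl-Pc} respects composition. Both were established in \S\ref{subsec:cl}; granting them, the proposition is precisely the formal functoriality displayed above.
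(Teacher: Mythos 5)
Your proposal is correct and is exactly the argument the paper intends: the paper offers no written proof beyond the remark that the convolution construction ``immediately implies'' the statement, and what it is implicitly invoking is precisely the functoriality you spell out --- post-composition with the operad morphism $\opEnd^{\ch}_M \to \opEnd^{c}_M$ commutes with $\Delta_{(1)}$ and the infinitesimal compositions, hence with $\star$ and the bracket, while the weight grading lives on the source $\opLie^{c!}$ and the differentials vanish. You also correctly isolate the only substantive input, namely that the sequence \eqref{eq:operad_seq} consists of operad morphisms, which was established in \S\ref{subsec:cl}.
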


Since the Maurer-Cartan equation is given universally for dg Lie algebras,
the existence of $\wh{\psi}$ yields

\begin{thm}
The morphism $\wh{\psi}$ induces a map
\[
 \psi: \MC(\g^{\ch}_M) \longto \MC(\g^{c}_M).
\]
\end{thm}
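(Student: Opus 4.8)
The plan is to invoke the universal principle that a morphism of dg Lie algebras carries Maurer-Cartan solutions to Maurer-Cartan solutions; the preceding proposition has already done the substantive work by exhibiting $\wh{\psi}$ as such a morphism preserving the weight grading, so what remains is essentially bookkeeping. Concretely, I would simply set $\psi := \wh{\psi}|_{\MC(\g^{\ch}_M)}$, that is, $\psi(\alpha) := \wh{\psi}(\alpha)$ for each $\alpha \in \MC(\g^{\ch}_M)$, and verify that the right-hand side lands in $\MC(\g^{c}_M)$.

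First I would unwind the definitions: an element of $\MC(\g^{\ch}_M)$ is, via the definitions of $\Tw$ and of the weight grading, a homogeneous $\alpha$ of cohomological degree $-1$ and weight $1$ satisfying $\partial\alpha + \tfrac{1}{2}[\alpha,\alpha] = 0$ in $\g^{\ch}_M$ (here $\partial = 0$, so this reduces to $[\alpha,\alpha]=0$, but I keep the general form). Since $\wh{\psi}$ commutes with both differential and bracket, applying it to the Maurer-Cartan equation gives
\[
 0 = \wh{\psi}\Bigl(\partial\alpha + \tfrac{1}{2}[\alpha,\alpha]\Bigr)
   = \partial\bigl(\wh{\psi}(\alpha)\bigr)
     + \tfrac{1}{2}\bigl[\wh{\psi}(\alpha),\wh{\psi}(\alpha)\bigr],
\]
so $\wh{\psi}(\alpha)$ solves the Maurer-Cartan equation in $\g^{c}_M$. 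For the grading constraints, a morphism of dg Lie algebras preserves cohomological degree, and $\wh{\psi}$ preserves the weight grading by the previous proposition; hence $\wh{\psi}(\alpha)$ again sits in degree $-1$ and weight $1$, placing it in $\MC(\g^{c}_M)$.

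I do not anticipate a genuine obstacle: the content is entirely contained in the statement that $\wh{\psi}$ is a weight-preserving morphism of dg Lie algebras, and the compatibility of the Maurer-Cartan functor with such morphisms is formal. The only point requiring a moment's care is that in this paper $\MC$ is cut out inside $\Tw$ by \emph{two} homogeneity conditions (degree $-1$ and weight $1$), so I must confirm that both are preserved and not merely the Maurer-Cartan equation itself; but both are immediate from $\wh{\psi}$ being graded and weight-graded.
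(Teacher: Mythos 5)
Your argument is correct and is exactly the paper's: the paper disposes of this theorem with the single remark that the Maurer-Cartan equation is given universally for dg Lie algebras, so the weight-preserving morphism $\wh{\psi}$ of the preceding proposition induces the map $\psi$ on Maurer-Cartan elements. You have merely spelled out that one-line observation, including the check of the degree and weight homogeneity conditions, which is a harmless elaboration.
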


\begin{dfn}
We call $\mu \in \MC(\g^{\ch}_M)$ 
a \emph{chiral deformation quantization} of  $\mu^c \in \MC(\g^{c}_M)$ 
if $\psi(\mu)=\mu^c$.
\end{dfn}

\begin{rmk}\label{rmk:dq-cdq}
Here is the totally different feature of our quantization 
problem from the usual deformation quantization of associative algebras.
The dg Lie algebra of a Poisson algebra $(A,\circ,\{\,\})$ is 
given by $\Wedge^\bullet_A \Der(A)$ with the Gerstenhaber bracket.
We have the Hochschild-Kostant-Rosenberg quasi-isomorphism 
$f:\Wedge^\bullet_A \Der(A) \simto H(C^\bullet(A,A))$.
The difficult point is that the linear map
$\wh{f}:\Wedge^\bullet_A \Der(A) \to C^\bullet(A,A)$ 
obtained naturally from $f$ is not a morphism of dg Lie algebras.
\cite{K} succeeded to deform $\wh{f}$ to an $L_\infty$-morphism 
$f_\infty$, and obtained a dg Lie algebra morphism. 
In our situation, we have a natural morphism of dg Lie algebras from 
the beginning.
\end{rmk}

One may ask why we don't treat the coset 
$\catMC(\g)=\MC(\g)/G$ 
where $G$ is the adjoint group of $\g^0$.
By Propositions \ref{prop:chiralLA} and \ref{prop:coissonLA} 
we have $\g^0=\g(1)=\Hom_{\catM(X)}(M,M)$
for $\g=\g^{\ch}_M$ and $\g^c_M$,
so that it is enough to treat $\MC(\g)$.

Obviously, if the map  $\psi$ is surjective,
then a chiral deformation quantization exists.

\begin{prop}
If $M$ is a projective $\shD$-module, then $\psi$ is surjective, 
so that for any $\mu^c \in \MC(\g^{c}_M)$ has a 
a chiral deformation quantization.
\end{prop}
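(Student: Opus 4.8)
The plan is to promote $\wh\psi$ from the injection of the injectivity theorem to an \emph{isomorphism} of graded Lie algebras under the projectivity hypothesis; then $\psi=\MC(\wh\psi)$ is a bijection, hence in particular surjective, and every coisson structure $\mu^c$ is realized as $\psi(\wh\psi^{-1}(\mu^c))$, giving the required chiral deformation quantization.

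First I would reduce projectivity to $\shO_X$-flatness. Since $X$ is smooth, $\shD_X$ is $\shO_X$-flat (it is the filtered union of the locally free sheaves $\shD_{X,\le n}$), so every free right $\shD$-module is $\shO_X$-flat and hence so is any direct summand of one; a projective $M$ is such a summand. This is exactly the condition under which the canonical surjection $\phi_I$ of \S\ref{sss:cl} is an isomorphism, and therefore the embedding \eqref{eq:Pcl-Pc} is an isomorphism as well.

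Next I would exhibit $\wh\psi$ as a filtered morphism with a computable associated graded. On $\g^{\ch}_M(n)=\Hom_{\frkS_n}(\opLie^{c!}(n),\opEnd^{\ch}_M(n))$ put the (finite, exhaustive) filtration pulled back from $P^{\ch,\bullet}$, and regard the $Q([n])$-grading of $\g^c_M(n)$ as a filtration. The construction of $\wh\psi$ out of the sequence $\opEnd^{\ch}_M\twoheadrightarrow\gr\opEnd^{\ch}_M\hookrightarrow\opEnd^c_M$ makes $\wh\psi$ respect these filtrations, and one checks that its associated graded is obtained by applying the functor $\Hom_{\frkS_n}(\opLie^{c!}(n),-)$ -- exact, since $\bbK[\frkS_n]$ is semisimple in characteristic zero -- to the embedding \eqref{eq:Pcl-Pc}. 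Without any hypothesis \eqref{eq:Pcl-Pc} is injective, which is precisely the source of the injectivity of $\psi$; with $M$ flat it is an isomorphism, so $\gr\wh\psi$ is an isomorphism of graded Lie algebras.

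Finally I would invoke the elementary fact that a morphism of filtered vector spaces inducing an isomorphism on associated graded, with exhaustive and bounded source filtration, is itself an isomorphism. Applying this in each arity yields $\wh\psi\colon\g^{\ch}_M\longsimto\g^c_M$, an isomorphism of graded Lie algebras (recall $\partial=0$ on both sides), whence $\psi$ is bijective and thus surjective. I expect the only genuine work to lie in the middle step: verifying that $P^{\ch,\bullet}$ is compatible both with $\Hom_{\frkS}(\opLie^{c!},-)$ and with the chiral composition defining the bracket, so that forming the convolution Lie algebra commutes with passage to $\gr$ and $\gr\wh\psi$ is indeed induced by \eqref{eq:Pcl-Pc}. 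The flatness enters only at the very end of that step, to turn the always-injective $\gr\wh\psi$ into an isomorphism.
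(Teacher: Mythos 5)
Your strategy---upgrading $\wh{\psi}$ to an isomorphism of graded Lie algebras and deducing that $\psi$ is bijective---is more ambitious than the paper's proof, which simply cites \cite[\S3.2.4]{BD} for the fact that $\gr\opEnd^{\ch}_M \longto \opEnd^{c}_M$ is an isomorphism when $M$ is a projective $\shD$-module. Your route could work, but it has a genuine gap at the step you treat as routine: you reduce projectivity to $\shO_X$-flatness and then assert that flatness makes \eqref{eq:Pcl-Pc} an isomorphism. Flatness only makes the canonical surjection $\phi_I$ an isomorphism. The embedding \eqref{eq:Pcl-Pc} factors as
\[
 \gr^n P^{\ch}_I(\{L_i\},M) \longhookrightarrow
 \Hom_{\catM(X^I)}\bigl(\gr^W_{n-|I|}\, j^{(I)}_*j^{(I)*}(\boxtimes L_i),\, \Delta^{(I)}_* M\bigr)
 \longto
 \bigoplus_{T} P^*_T\bigl(\{\otimes^!_{i\in I_t}L_i\},M\bigr)\otimes\opLie_{I/T},
\]
where the second arrow is precomposition with $\phi_I$ and does become an isomorphism under flatness, but the first arrow is merely the restriction map $P^{\ch,n}/P^{\ch,n+1}\to\Hom(\gr^W_{n-|I|},\cdot)$. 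Its image consists of those morphisms defined on the graded piece that \emph{extend} to the quotient $j^{(I)}_*j^{(I)*}(\boxtimes L_i)/W_{n-1-|I|}$, and nothing in your argument solves this extension problem; the special filtration does not split in general for $|I|\ge 3$. This is exactly where the full projectivity hypothesis (rather than the $\shO_X$-flatness you retain after the first step) has to enter, and it is the content of the remark in \cite[\S3.2.4]{BD}. By trading projectivity for flatness at the outset, you discard the hypothesis precisely at the point where it is needed.

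The surrounding scaffolding is fine: the filtration on $\Hom_{\frkS_n}(\opLie^{c!}(n),\opEnd^{\ch}_M(n))$ is finite and exhaustive, the functor $\Hom_{\frkS_n}(\opLie^{c!}(n),-)$ is exact because $\opLie^{c!}(n)\simeq s^{1-n}\sgn_n$ is one-dimensional and we work in characteristic $0$, and an isomorphism on associated graded for bounded filtrations does yield an isomorphism, whence bijectivity of $\psi$. To repair the proof you should either establish the extension statement above from projectivity of $M$, or do what the paper does and cite \cite[\S3.2.4]{BD} directly for the isomorphism $\gr\opEnd^{\ch}_M \simto \opEnd^{c}_M$, from which surjectivity of $\psi$ follows.
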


\begin{proof}
If $M$ is projective then 
$\gr\opEnd^{\ch}_M \to \opEnd^{c}_M$
in \eqref{eq:operad_seq:rep}
is an isomorphism, as remarked in \cite[\S3.2.4]{BD}.
\end{proof}

Now our main theorem is 

\begin{thm}
$\psi$ is always an injection.
\end{thm}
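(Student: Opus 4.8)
The plan is to reduce everything to the injectivity of one map of binary operations, and then to read that injectivity off from the two-step weight filtration on $\opEnd^{\ch}_M(2)$. First I would invoke that Maurer--Cartan elements live in weight one: by Fact \ref{fct:Koszul} an element of $\MC(\g^{\ch}_M)$ is concentrated in weight $1$, hence determined by its binary component $\mu(\ell_2)\in\opEnd^{\ch}_M(2)$, and the same holds for $\MC(\g^c_M)$. Since $\wh\psi$ preserves the weight grading, $\psi$ is the restriction to Maurer--Cartan elements of the weight-one part of $\wh\psi$, which is induced by the binary sequence \eqref{eq:operad_seq:rep}, i.e.\ by
\[
 \wh\psi_2\colon\opEnd^{\ch}_M(2)\longto\opEnd^c_M(2),\qquad
 \wh\psi_2=\iota\circ\sigma,
\]
where $\sigma\colon\opEnd^{\ch}_M(2)\to\gr_W\opEnd^{\ch}_M(2)$ is the symbol map of the weight filtration and $\iota\colon\gr_W\opEnd^{\ch}_M(2)\hookrightarrow\opEnd^c_M(2)$ is the embedding \eqref{eq:Pcl-Pc}. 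As $\wh\psi_2$ is $\frkS_2$-equivariant, it suffices to prove that $\wh\psi_2$ is injective; then $\psi(\mu_1)=\psi(\mu_2)$ forces $\mu_1(\ell_2)=\mu_2(\ell_2)$, whence $\mu_1=\mu_2$. I emphasise that the Maurer--Cartan (Jacobi) equation plays no role: the content is the \emph{unconditional} injectivity of $\wh\psi_2$, which is exactly what distinguishes injectivity of $\psi$ (always) from its surjectivity (only for projective $M$).

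Next I would make the filtration on $\opEnd^{\ch}_M(2)$ explicit. Specialising \eqref{eq:ch:filt} to $I=[2]$ and using the residue short exact sequence
\[
 0\longto M\boxtimes M\longto j_*j^*(M\boxtimes M)\longto Q\longto 0,\qquad
 Q:=j_*j^*(M\boxtimes M)\big/(M\boxtimes M),
\]
one sees that the filtration has exactly two nonzero steps, with graded pieces
\[
 \gr^0\opEnd^{\ch}_M(2)\simeq\beta\bigl(\opEnd^{\ch}_M(2)\bigr)\subseteq P^*_2,\qquad
 \gr^1\opEnd^{\ch}_M(2)=\Hom_{\catM(X^2)}(Q,\Delta_*M)=\Ker\beta,
\]
where $\beta$ is restriction along $M\boxtimes M\hookrightarrow j_*j^*(M\boxtimes M)$. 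The identification of $\gr^0$ with the $*$-bracket slot and of $\gr^1$ with the commutative ($\otimes^!$) slot of $\opEnd^c_M(2)$ is precisely the exact sequence $0\to P^!_2\otimes\opLie_2\xrightarrow{\alpha}P^{\ch}_2\xrightarrow{\beta}P^*_2$ recalled in \S\ref{subsec:limit}. Because this filtration is finite, separated and exhaustive, the symbol map $\sigma=(\beta,\sigma_1)$, with $\sigma_1$ the residue symbol that is the identity on $\Ker\beta=\gr^1$, is injective: if $\sigma(b)=0$ then $\beta(b)=0$, so $b\in\Ker\beta$, and then $b=\sigma_1(b)=0$. Since $\iota$ is injective for arbitrary $M$ by \eqref{eq:Pcl-Pc} (it is an isomorphism precisely when $M$ is projective, cf.\ \cite[\S3.2.4]{BD}), the composite $\wh\psi_2=\iota\circ\sigma$ is injective, completing the reduction.

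The routine ingredients are the two symbol identifications above. The one step I expect to require care — the main obstacle — is the assertion that $\sigma_1$ genuinely records the commutative ($\otimes^!$) product as a \emph{linear retraction} onto $\gr^1=\Ker\beta$, and not merely a leading symbol defined on the subspace $\Ker\beta$. For the diagram chase to close, $\sigma_1$ must be defined on all of $\opEnd^{\ch}_M(2)$, compatibly with the operad morphism $\opEnd^{\ch}_M\to\gr_W\opEnd^{\ch}_M$. This is supplied by the canonical residue (Cousin) splitting underlying the construction of $\gr_W\opEnd^{\ch}_M$ in \S\ref{subsec:cl}, and it is exactly here that the $\shD$-module structure enters: it encodes the translation operator, so that the regular part of a chiral bracket is pinned down by its polar symbol together with the commutative product. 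Once this compatibility is in place, the injectivity of $\wh\psi_2$, and hence of $\psi$, follows as above.
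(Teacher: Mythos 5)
Your proposal is correct and follows essentially the same route as the paper: both reduce the statement to the injectivity of the binary map $\opEnd^{\ch}_M(2)\to\opEnd^{c}_M(2)$ and deduce it from the two-step weight filtration with graded pieces $\mathrm{Im}\,\beta\subseteq P^*_2$ and $\Ker\beta\simeq P^!_2\otimes\opLie_2$, together with the splitting coming from the Cousin sequence of $\omega_{X^2}$. The point you single out as the main obstacle --- that the symbol onto $\gr^1$ must be a genuine retraction defined on all of $\opEnd^{\ch}_M(2)$, not just a leading term on $\Ker\beta$ --- is precisely the step the paper disposes of with the one-line assertion that the Cousin exact sequence splits, so the two arguments coincide, including in where they lean on that splitting.
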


\begin{proof}
By Propositions \ref{prop:chiralLA} and \ref{prop:coissonLA},
we have 
\[
 \MC(\g^{\ch}_M) \simeq 
 \{ \alpha \in C^{\ch,2}(M) \mid \alpha \star \alpha =0\},
 \quad
 \MC(\g^{\ch}_M) \simeq 
 \{ \alpha \in C^{c,2}(M) \mid \alpha \star \alpha =0\} 
\]
with 
\begin{align*}
&C^{\ch,2}(M) = 
 \Hom_{\catM(X^2)}\bigl(j_*j^*(\wedge^2 M),\Delta_*(M)\bigr)
\\
&C^{c,2}(M) := 
 \Hom_{\catM(X^2)}\bigl(\wedge^2 M, \Delta_* (M) \bigr) 
 \bigoplus
 \Hom_{\catM(X)}\bigl(M^{\otimes^! 2}, 
 M \bigr) \otimes \opLie(2).
\end{align*}
Now recall the special filtration $W^\bullet$ on $\opEnd^{ch}_M$ 
given by \S\ref{sss:cl}, \eqref{eq:ch:filt}.
On $C^{\ch,2}$ it yields a decreasing filtration 
\[
  0 = W^0 C^{\ch,2} \subset  W^{-1} C^{\ch,2} \subset W^{-2} C^{\ch,2}
 \subset W^{-3} C^{\ch,2} = C^{\ch,2}.
\]
We want to write it down this sequence explicitly.
Recall that we have an explicit description \eqref{eq:spfilt:grd}
of the graded components $\gr^{W}_{\bullet}j_*j^* \omega_{X^I}$.
For $I=[2]$ it reads
\begin{align*}
&j_*j^* \omega_{X^2}
 =W^0 j_*j^* \omega_{X^2} \supset 
  W^{-1} j_*j^* \omega_{X^2} \supset
  W^{-2} j_*j^* \omega_{X^2} \supset  
  W^{-3} j_*j^* \omega_{X^2} =0,
\\
&\gr^{W}_{0}=0, \quad
 \gr^{W}_{-1}=\Delta_* \omega_X, \quad
 \gr^{W}_{-2}=\omega_X^{\boxtimes 2} \otimes \opLie(2)^*.
\end{align*}
We also have the following Cousin complex.
\[
 0 \longto \omega_{X}^{\boxtimes 2} \longto 
 j_*j^* \omega_X  \longto \Delta_* \omega_X \longto 0.
\]
Thus the exact sequence splits.
Now going back to $W^\bullet C^{\ch,2}$,
we find
\[
 C^{\ch,2} \simeq \gr_{W}^\bullet C^{\ch,2} \simeq \gr \opEnd_M(2). 
\]
Thus 
the first arrow in \eqref{eq:operad_seq:rep} 
is an isomorphism,
so that \eqref{eq:operad_seq:rep} is an injection in total.
Thus the induced map $\psi$ is an injection.
\end{proof}

\begin{cor}\label{cor:unique}
A chiral deformation quantization is unique if it exists.
\end{cor}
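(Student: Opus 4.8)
The plan is to reduce the injectivity of $\psi$ to a linear statement on the binary parts of the two operads, and then to show that the only potentially nontrivial step in the sequence \eqref{eq:operad_seq:rep} becomes invisible on binary operations precisely because the relevant Cousin sequence splits.

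First I would use Propositions \ref{prop:chiralLA} and \ref{prop:coissonLA} to identify the Maurer--Cartan loci explicitly,
\[
 \MC(\g^{\ch}_M) \simeq \{\alpha \in C^{\ch,2}(M) \mid \alpha \star \alpha = 0\},
 \quad
 \MC(\g^c_M) \simeq \{\alpha \in C^{c,2}(M) \mid \alpha \star \alpha = 0\}.
\]
Both loci are carved out inside the binary operation spaces, and $\psi$ is the restriction to them of the weight-$1$ component of $\wh\psi$, i.e.\ of the linear map $C^{\ch,2}(M) \to C^{c,2}(M)$. Hence it suffices to show this map is injective. By \eqref{eq:operad_seq:rep} it is the antisymmetrization (tensoring with $\sgn_2$ over $\frkS_2$, as in Lemma \ref{lem:gchM:Smod}) of the composite
\[
 \opEnd^{\ch}_M(2) \longtwoheadrightarrow \gr\opEnd^{\ch}_M(2) \longhookrightarrow \opEnd^c_M(2),
\]
whose second arrow is injective by the fully faithful embedding $\catM(X)^{\cl} \hookrightarrow \catM(X)^c$ of \S\ref{sss:cl}. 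So everything reduces to showing that the first arrow is an \emph{isomorphism} on the binary part.

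For this I would compute the special filtration $W^\bullet$ of \eqref{eq:ch:filt} on $C^{\ch,2}(M)$. For $|I|=2$ it has only three nontrivial steps, and the explicit description \eqref{eq:spfilt:grd} of the graded pieces of $j_*j^*\omega_{X^I}$ yields, after antisymmetrizing, $\gr^W_{-1} = \Delta_*\omega_X$ and $\gr^W_{-2} = \omega_X^{\boxtimes 2}\otimes\opLie(2)^*$, with $\gr^W_0 = 0$. The whole filtration is governed by the residue (Cousin) short exact sequence
\[
 0 \longto \omega_X^{\boxtimes 2} \longto j_*j^*\omega_{X^2} \longto \Delta_*\omega_X \longto 0,
\]
and the decisive point is that \emph{this sequence splits}. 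I expect the splitting to be the main obstacle: it is exactly here that the hypothesis $\dim X = 1$ enters, in the spirit of Cohen's theorem (Fact \ref{fct:Cohen}), and it is what guarantees that the filtration on $C^{\ch,2}(M)$ carries no nontrivial extension, so that $C^{\ch,2}(M) \simeq \gr^\bullet_W C^{\ch,2}(M) \simeq \gr\opEnd^{\ch}_M(2)$.

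Granting the splitting, the first arrow $\opEnd^{\ch}_M(2) \twoheadrightarrow \gr\opEnd^{\ch}_M(2)$ is an isomorphism, so the full composite \eqref{eq:operad_seq:rep} is injective on binary operations. Restricting back to the Maurer--Cartan loci shows that $\psi$ is injective, which proves the theorem.
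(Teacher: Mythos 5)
Your proposal is correct and follows essentially the same route as the paper: the corollary is just the restatement of the injectivity of $\psi$, and your argument for that injectivity — reducing to the binary parts via Propositions \ref{prop:chiralLA} and \ref{prop:coissonLA}, computing the special filtration on $C^{\ch,2}(M)$ from \eqref{eq:spfilt:grd}, and using the splitting of the Cousin sequence $0 \to \omega_X^{\boxtimes 2} \to j_*j^*\omega_{X^2} \to \Delta_*\omega_X \to 0$ to conclude that $C^{\ch,2}(M) \simeq \gr^\bullet_W C^{\ch,2}(M)$, hence that the first arrow of \eqref{eq:operad_seq:rep} is an isomorphism on binary operations — is precisely the paper's proof of the preceding theorem. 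Your additional remark that the injectivity of the second arrow comes from the fully faithful embedding $\catM(X)^{\cl}\hookrightarrow\catM(X)^{c}$ is consistent with the paper, which leaves that step implicit.
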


\begin{rmk*}
As shown in \cite[\S2.6]{BD} and mentioned in the introduction,
we have two standard examples of coisson algebras.
The fist one corresponds to 
the vertex Poisson algebra $V_{\infty}(\g)$ arising from 
the affine vertex algebra,
and the second one $W_{\infty}(\g,e_{\reg})$ arising from the $W$-algebra.
By Corollary \ref{cor:unique}, 
their chiral deformation quantizations are unique.
We know the existence, namely $V_{k}(\g)$ and $W_{k}(\g,e_{\reg})$,
so that all the chiral deformation quantizations are isomorphic 
to these standard vertex algebras.
\end{rmk*}

\subsection*{Acknowledgement}
The author is supported by the Grant-in-aid for 
Scientific Research (No.\ 16K17570), JSPS.
This work is also supported by the 
JSPS for Advancing Strategic International Networks to 
Accelerate the Circulation of Talented Researchers
``Mathematical Science of Symmetry, Topology and Moduli, 
  Evolution of International Research Network based on OCAMI"''.

The author would like to express special gratitude to Professor T.~Kuwabara
whose talk at Kyoto University in the summer 2015 invoked 
the motivation of this note,
and for pointing out the literature \cite{H1, H2}.
The author would also like to thank the organizers of 
\emph{Algebraic Lie Theory and Representation Theory 2016}
where a part of this study is presented.

Large part of this note is written during the author's stay at UC Davis 
in the spring 2016. 
The author would like to thank the institute for support and hospitality,
and Professor M.~Mulase for the discussion around opers.


\end{document}